\newtheorem{theorem}{Theorem}[section]
\newtheorem{lemma}{Lemma}[section]
\newtheorem{remark}{Remark}[section]
\numberwithin{equation}{section}
\def\div{\mbox{div}}
\def\dfrac{\displaystyle\frac}
\def\dint{\displaystyle\int}
\def\dsum{\displaystyle\sum}
\def\R{\mathrm R}
\def\R{{\Bbb R}}
\def\u{\textbf{u}}
\def\v{\textbf{v}}
\def\g{\textbf{g}}
\def\f{\textbf{f}}
\def\V{\textbf{V}}
\def\H{\textbf{H}}
\def\L{\textbf{L}}
\def\W{\textbf{W}}
\def\e{\textbf{e}}
\def\n{\textbf{n}}
\def\P{\textbf{P}}
\def\R{\textbf{R}}
\def\RT{ \textbf{RT}}
\def\nn{\nonumber}
\title{\bf Error analysis of a Euler finite element scheme for Natural convection model with variable density}
\author[a]{Li Hang}
\author[a]{Chenyang Li \thanks{Corresponding author.\\  
E-mail address: 52275500026@stu.ecnu.edu.cn (C. Li)}}
\affil[a]{School of Mathematical Sciences, East China Normal University, Shanghai 200241, China.}
\begin{document}

\maketitle

\begin{abstract}

In this paper, we derive first-order Euler finite element discretization schemes for a time-dependent natural convection model with variable density (NCVD). The model is governed by the variable density Navier–Stokes equations coupled with a parabolic partial differential equation that describes the evolution of temperature. Stability and error estimate for the velocity, pressure, density and  temperature in $L^2$-norm are proved by using finite element approximations in space and finite differences in time. Finally, the numerical results are showed to support the theoretical analysis.

\textbf{Keywords}:
Natural convention, Variable density, Mixed finite element, Error estimate
\end{abstract}

\pagestyle{myheadings}
\thispagestyle{plain}

\section{Introduction}

Natural convection arises when temperature gradients in a fluid cause spatial variations in density. Under the influence of gravity, these density differences generate buoyancy forces that drive fluid motion. This phenomenon is observed in many engineering and geophysical applications such as atmospheric flows, oceanic circulations, and building ventilation. In this paper, we consider the  Natural Convection model with variable density (NCVD)
which are governed by the following nonlinear coupled system
in $\Omega\times (0, T]$:
\begin{eqnarray}
         \rho_t+  \nabla\cdot ( \rho \u) &=&0, \label{ns1}  \\
         \label{ns2}
         \rho \u_t-\mu\Delta \u+\rho(\u\cdot \nabla )\u+\nabla p&=&\textbf{f}, \\
         \label{ns3}
         \nabla\cdot \u&=&0,\\
         \label{ns4}
         \rho (\theta_t+ \textbf{u} \cdot \nabla \theta ) - \kappa \Delta \theta &=&g.
        \end{eqnarray}
where $\Omega \subset \R^3$ is a convex polyhedron domain,
$\f$ and $g$ are given body force and $\mu>0$ is the viscosity coefficient, $\kappa>0$ is the thermal conductivity parameter. 
In the above system (\ref{ns1})-(\ref{ns3}),
the unknown functions are the density $\rho$, the velocity field $\u$ and the pressure $p$, the temperature $\theta$.

The system (\ref{ns1})-(\ref{ns3}) are supplemented the following
initial-boundary conditions:
\begin{align}\label{BC}
\left\{\begin{array}{lll}
\rho(x,0)=\rho_0(x), \quad \rho(x,t)|_{\Gamma_{in}}=b(x,t),\\
\u(x,0)=\u_0(x), \quad \u(x,t)|_{\Gamma}=\textbf{a}(x,t),\\
\theta (x,0) =\theta_0(x), \quad \theta(x,t) | _{\Gamma} =0,
\end{array}\right.
\end{align}
where $\Gamma:=\partial\Omega$ is the boundary, and
$\Gamma_{\text{in}}$ is the general inflow boundary defined
by $\Gamma_{\text{in}}=\{ x\in\Gamma: \ \g\cdot\n<0 \}$.
For the reason of simplicity,  we consider
the homogeneous Dirichlet boundary condition for the velocity, i.e. $\textbf{a}(x,t)=0$,
which means that the boundary is impermeable, i.e., $\Gamma_{\text{in}}=\emptyset$.

In addition, we assume that there is no vacuum
state inside the domain and there exist two positive constants $\rho_0^{\min}$ and $\rho_0^{\max}$ such that
\begin{align}
        \label{compatibility}
        0< \min_{x\in\overline\Omega}\rho_0(x):=\rho_0^{\min}   \leq \rho_0(x) \leq \rho_0^{\max}:= \max_{x\in\overline\Omega}\rho_0(x).
\end{align}
Furthermore, the initial value $\u_0$ satisfies the incompressible condition $\nabla\cdot\u_0=0$ in $\Omega$.

When the temperature variation is relatively small, the Boussinesq approximation can be applied, where the density is considered constant in all terms except for the buoyancy term in the momentum equation. Many researchers have studied the natural convention model with constant density \cite{vahl1983,huang2015,su2014,su2017,su20172,wu2016,wu2017,zhang2016}.
However, in many geophysical flows, the temperature differences are sufficiently large to induce significant density variations, thereby rendering the Boussinesq approximation invalid.

The authors constructed unconditionally stable Gauge-Uzawa fnite element schemes for natural convection problem with variable density in \cite{wu2017}, the proposed schemes lead to a sequence of decoupled elliptic equations
to solve at each step, which are very eﬃcient and easy to implement. 
A novel characteristic variational multiscale finite element method  was introduced in \cite{wang2019}, which combines advantages of both the characteristic and variational multiscale methods within a variational framework for solving the incompressible natural convection
problem with variable density. The authors presented a new variant of the smoothed particle hydrodynamics simulations for natural convection problem with variable density in \cite{szewc2011}.
A novel fractional time-stepping finite element approaches was presented for solving
incompressible natural convection problems with variable density \cite{wu2020}, the main merit of these methods is that it only need to solve one
Poisson equation per time step for the pressure, which is computationally more eﬃcient. We attempt to develop eﬃcient numerical methods and give mathematical
analysis based the above research.

Based on the discussion above, we will study the back Euler finite element discrete scheme for natural convection model with variable density, we will prove the stable and convergent analysis of the proposed schemes for NCVD problem. In the proposed fully discrete scheme, nonlinear terms were treated by a linearized semi-implicit approximation such that it is easy for implementation.

The remainder of this paper is organized as follows. Notations, along with time discretization, are introduced
in Section 2. We develop a first-order Euler finite element discrete scheme for incompressible NC problems with variable density in Section 3. In Section 4, we prove the error estimate  of the first-order Euler finite element  algorithms. In Section 5,
Then numerical experiments
illustrating the performance of the methods are reported. Finally, we end with a short conclusion in Section 6.

\section{Preliminaries}
\label{Preliminaries}
For $k\in\mathbb N^+$ and $1\leq p\leq +\infty$, we use $W^{k,p}(\Omega)$ to denote the classical Sobolev space.
The norm in $W^{k,p}(\Omega)$ is denoted by $\|\cdot\|_{W^{k,p}}$ defined by a classical way.
Denote $W_0^{k,p}(\Omega)$ be the subspace of $W^{k,p}(\Omega)$ where the
functions have zero trace on $\partial\Omega$.
Especially, $W^{0,p}(\Omega)$ is the Lebesgue space $L^p(\Omega)$.
When $p=2$, $W^{k,2}(\Omega)$ is the Hilbert space which is simply denoted by $H^k(\Omega)$.
The boldface notations $\H^k(\Omega), \W^{k,p}(\Omega)$ and $\L^p(\Omega)$ are
used to denote the vector-value Sobolev spaces corresponding to
 $H^k(\Omega)^3, W^{k,p}(\Omega)^3$ and $L^p(\Omega)^3$, respectively.

Denote
\begin{align*} W & =H^1(\Omega), \quad \V =\H^1_0(\Omega), \quad \V_0=\{\v\in \V, \ \nabla\cdot\v=0 \}, \\
        M & =L_0^2(\Omega)=\{ q\in
        L^2(\Omega), \ \dint_\Omega qd x=0 \},\quad        F=H^1_0(\Omega).
\end{align*}
The norm in $\V$ can be defined by
$$\|\v\|_V=\|\nabla\v\|_{L^2}=\left(\dint_\Omega|\nabla\v|^2dx\right)^{1/2} \quad \forall \ \v\in\V.$$

For simplicity, we denote the inner products of both
$L^2(\Omega)$ and $\textbf{L}^2(\Omega)$
by $(\cdot,\cdot)$, namely,
\begin{align*}
	\begin{split}
		&(u,v)=\int_\varOmega u(x)v(x) d x \quad \forall  \, u,v\in L^2(\Omega),\\
		&({\bf u},{\bf v})=\int_\Omega {\bf u}(x)\cdot {\bf v}(x) dx \quad \forall \, {\bf u},{\bf v}\in {\bf L}^2(\Omega) .
	\end{split}
\end{align*}

The discrete Gronwall inequality established in \cite{evance2022,heywood1990} will be used frequently in the following.

\begin{lemma}
        \label{GronwallLemma}
         Let $a_k, b_k$ and $\gamma_k$  be the nonnegative numbers such that
        \begin{align}\label{growninequality-discrete}a_n+\tau\dsum\limits_{k=0}^n b_k\leq \tau \dsum\limits_{k=0}^n \gamma_ka_k+ B
         \quad \mbox{for} \ n\geq 0.\end{align}
        Suppose $\tau\gamma_k<1$ and set $\sigma_k=(1-\tau\gamma_k)^{-1}$. Then there holds
        \begin{align}\label{gronwall2}a_n+\tau\dsum\limits_{k=0}^n b_k\leq \exp(\tau\dsum\limits_{k=0}^n
                \gamma_k \sigma_k)B
         \quad \mbox{for} \ n\geq 0. \end{align}
         	\begin{remark}
         	If the sum on the right-hand side of (\ref{growninequality-discrete}) extends only up to $n-1$, then the estimate (\ref{gronwall2}) still holds for all $k \geq 1$ with $\sigma_k=1$.
         \end{remark}
        \end{lemma}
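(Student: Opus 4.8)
The plan is to track the entire right-hand side of (\ref{growninequality-discrete}) as a single quantity and derive a one-step recursion for it. Define $d_n := \tau\sum_{k=0}^n \gamma_k a_k + B$, so that the hypothesis (\ref{growninequality-discrete}) reads $a_n + \tau\sum_{k=0}^n b_k \leq d_n$. Since every $b_k \geq 0$, this immediately gives the pointwise bound $a_n \leq d_n$, which is the only consequence of the $b$-terms I will need; the cumulative sum $\tau\sum_{k=0}^n b_k$ is simply carried along on the left throughout the argument and never interferes with the recursion.

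First I would isolate the diagonal contribution by subtracting consecutive right-hand sides: $d_n - d_{n-1} = \tau\gamma_n a_n$. Substituting $a_n \leq d_n$ yields $d_n - d_{n-1} \leq \tau\gamma_n d_n$, i.e. $(1-\tau\gamma_n)d_n \leq d_{n-1}$. Here the assumption $\tau\gamma_n < 1$ is essential: it guarantees $1-\tau\gamma_n > 0$, so dividing is legitimate and order-preserving, producing $d_n \leq \sigma_n d_{n-1}$ with $\sigma_n = (1-\tau\gamma_n)^{-1}$. The base case is handled identically from (\ref{growninequality-discrete}) at $n=0$: the bound $a_0 \leq d_0$ forces $(1-\tau\gamma_0)d_0 \leq B$, hence $d_0 \leq \sigma_0 B$.

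Iterating this recursion (telescoping the product of the $\sigma_k$) then gives $d_n \leq \left(\prod_{k=0}^n \sigma_k\right) B$, and combining with $a_n + \tau\sum_{k=0}^n b_k \leq d_n$ reduces the whole statement to bounding the product $\prod_{k=0}^n \sigma_k$ by the claimed exponential. This is the one genuinely non-algebraic step, and the place where I expect the most care is required. I would use the elementary inequality $1+t \leq e^t$ together with the identity $\sigma_k = 1 + \tau\gamma_k\sigma_k$ (which is immediate from $\sigma_k = (1-\tau\gamma_k)^{-1}$), giving $\sigma_k \leq \exp(\tau\gamma_k\sigma_k)$ for each $k$. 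Taking the product over $k$ collapses the sum in the exponent and yields $\prod_{k=0}^n \sigma_k \leq \exp(\tau\sum_{k=0}^n \gamma_k\sigma_k)$, which is exactly the factor in (\ref{gronwall2}); this completes the proof of (\ref{gronwall2}).

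For the remark I would observe that when the right-hand sum in (\ref{growninequality-discrete}) stops at $n-1$, the term $a_n$ no longer appears on the right, so the diagonal-isolation step is vacuous, $1-\tau\gamma_n$ need never be inverted, and one may take $\sigma_k = 1$; the same telescoping then gives $a_n + \tau\sum_{k=0}^n b_k \leq \exp(\tau\sum_{k=0}^n \gamma_k)B$. Overall the main obstacle is not conceptual but bookkeeping: threading the $b$-sum and the base case correctly through the recursion, and invoking the strict condition $\tau\gamma_k < 1$ at precisely the point where the division by $1-\tau\gamma_k$ occurs.
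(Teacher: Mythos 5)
Your proof is correct. The paper does not actually prove this lemma --- it only cites it (to Heywood--Rannacher), so there is no in-paper argument to compare against; your derivation is the standard one from that reference: isolate the diagonal term via $d_n - d_{n-1} = \tau\gamma_n a_n \leq \tau\gamma_n d_n$, invert $1-\tau\gamma_n$ using $\tau\gamma_k < 1$, telescope, and convert the product $\prod_k \sigma_k$ to the exponential via $\sigma_k = 1 + \tau\gamma_k\sigma_k \leq \exp(\tau\gamma_k\sigma_k)$. All steps, including the base case $d_0 \leq \sigma_0 B$ and the $\sigma_k = 1$ variant for the sum stopping at $n-1$, check out.
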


\subsection{An equivalent system}

The system (\ref{ns1})-(\ref{ns3}) is written in convection form is difficult to analysis, by introducing $\sigma = \sqrt{\rho}$ and the following relation \cite{wu2017,guemond2000,chen2024}
\begin{eqnarray*}
        \dfrac{\partial\rho}{\partial t} + \nabla\cdot(\rho\u)
        & = & 2\sigma \left( \dfrac{\partial\sigma}{\partial t} +\nabla\cdot(\sigma\u) \right)=0, \\
        \rho \dfrac{\partial\u}{\partial t}&=&\sigma\dfrac{\partial (\sigma\u)}{\partial t} + \dfrac \u 2 \nabla\cdot(\rho\u),\\
        \rho \frac{\partial \theta}{\partial t} &=&\sigma \frac{\partial(\sigma \theta)}{\partial t} + \frac{\theta}{2} \nabla \cdot (\rho \textbf{u}).
\end{eqnarray*}
we rewrite NCVD problem (\ref{ns1})-(\ref{ns3}) to the following equivalent system:
\begin{eqnarray}
        \label{newns1}
 \sigma_t+  \nabla\cdot ( \sigma \u) &=&0,\\
 \label{newns2}
 \sigma(\sigma \u)_t-\mu\Delta \u+\rho(\u\cdot \nabla )\u+\frac{1}{2}\u\nabla\cdot (\rho\u)+\nabla p&=&\f,\\
 \label{newns3}
 \nabla\cdot \u&=&0,\\
 \label{eq17}
 \sigma (\sigma \theta)_t -\kappa \Delta \theta+\rho (\textbf{u}\cdot \nabla) \theta +\frac{1}{2} \theta \nabla \cdot (\rho \textbf{u}) &=&g,
\end{eqnarray}


For any sequence of functions $\{g^n\}_{n=0}^N$, we denote
$$D_\tau g^{n}=\dfrac{g^{n}-g^{n-1}}{\tau} \quad\mbox{for}  \ 1\leq n\leq N.$$

Start with $\sigma^0=\sigma_0$ and $\u^0=\u_0$.
 For $0\leq n\leq N-1$, the exact $(\sigma, \textbf{u}, p,\theta )$ solution satisfies the following variational formulation

Find $\sigma^{n+1}\in W$ such that
\begin{align}\label{ExactScheme1}
        (D_\tau \sigma^{n+1},r) + (\nabla\cdot (\sigma^{n+1}\u^n),r)=(R^{n+1}_{\sigma} ,r),
        \qquad \forall \ r\in W
\end{align}
and
\begin{multline}\label{ExactScheme2}
       (\sigma^{n+1}  D_\tau (\sigma^{n+1}\u^{n+1}),\v) + \mu (\nabla \u^{n+1},\nabla \v)
        +   (\rho^{n+1}(\u^n\cdot\nabla)\u^{n+1},\v) 
     + \left(\dfrac{\u^{n+1}}2 \nabla\cdot(\rho^{n+1}\u^n),\v \right)  \\
   -(\nabla\cdot\v, p^{n+1})+(\nabla\cdot\u^{n+1},q)-
        =( \f^{n+1},\v)+(R^{n+1}_{\textbf{u}},\textbf{v} ) ,
\end{multline}
for any $\v\times q\in \V\times M$.

and 
\begin{multline}\label{ExactScheme3}
       (\sigma^{n+1}  D_\tau (\sigma^{n+1}\theta^{n+1}),w) + \mu (\nabla \theta^{n+1},\nabla w)
        +   (\rho^{n+1}(\u^n\cdot\nabla)\theta^{n+1},w) \\
     + \left(\dfrac{\theta^{n+1}}2 \nabla\cdot(\rho^{n+1}\u^n),w \right)  
        =( g^{n+1},w)+(R^{n+1}_{\theta},w ) 
\end{multline}
for any $w\in F$.

The truncation error function are given by
\begin{align*}
R^{n+1}_{\sigma} &= D_{\tau} \sigma^{n+1}- \sigma_t(t_{n+1}) - \nabla \sigma^{n+1}(\textbf{u}^{n+1}-\textbf{u}^n),\\
R^{n+1}_{\u}&= \sigma^{n+1} D_{\tau} (\sigma^{n+1}\textbf{u}^{n+1}) - \sigma(t_{n+1}) (\sigma \textbf{u}) (t_{n+1}) \\
&+\rho^{n+1} (\textbf{u}^n-\textbf{u}^{n+1}) \cdot \nabla \textbf{u}^{n+1}+ \frac{\textbf{u}^{n+1}}{2} (\textbf{u}^n-\textbf{u}^{n+1}) \nabla \rho^{n+1},\\
R^{n+1}_{\theta} &= \sigma^{n+1} D_{\tau} (\sigma^{n+1} \theta^{n+1})  - \sigma(t_{n+1}) (\sigma \theta) (t_{n+1}) \\
&+ \rho^{n+1} (\textbf{u}^n-\textbf{u}^{n+1}) \nabla \theta^{n+1} + \frac{\theta^{n+1}}{2}(\textbf{u}^n - \textbf{u}^{n+1}) \nabla \rho^{n+1},
\end{align*}

Assume that the solutions to the system (\ref{newns1})-(\ref{newns3}) satisfy the following regularities.

{\bf (A1)}: \ Assume that the prescribed data $\f$, $\u_0$ and $\rho_0$ satisfy
	\begin{align*}  
		\rho_0\in H^3(\Omega), \quad 	\u_0\in\V_0\cap\H^{2}(\Omega), \quad \theta_0 \in H^1_0(\Omega)\cap H^{2}(\Omega),\quad
		\f\in L^\infty(0,T; \H^1(\Omega)).
	\end{align*}

{\bf (A2)}: \
Assume that the solution $(\rho, \u, p)$ is sufficiently smooth such that
	\begin{eqnarray*}
		\rho &\in & L^\infty(0, T; H^{3}(\Omega)), \ \rho_t\in L^2(0,T; H^2(\Omega)), \  \rho_{tt}\in L^2(0,T; L^2(\Omega)), \\
		\u &\in & L^\infty(0, T; \H^{3}(\Omega)\cap \V_0), \
		\u_t \in L^\infty(0,T; \H^1(\Omega))\cap L^2(0,T; \H^2(\Omega)),
		\\
		\u_{tt} &\in & L^2(0,T; \L^2(\Omega)), \  p \in L^\infty(0, T; H^2(\Omega)\cap M),\\
		\theta& \in& L^{\infty} (0,T;H^3(\Omega)),  \  \theta_t \in L^{\infty} (0,T;H^2(\Omega)), \ \theta_{tt} \in L^{\infty} (0,T;L^2(\Omega)).
	\end{eqnarray*}

\begin{remark}
As we know that the solution can achieve the $H^2$ regularity if the initial data is suficiently smooth in a convex domain such as rectangle. But whether the solution can have the $H^3$ regularity in a convex polygon domain is still an open problem. We make this regularity assumptions merely to mainly focuses on the error analysis, the strong
regularity conditions have recently been assumed in \cite{MR2802553,li2023error,MR4221298}.
\end{remark}
By the regularity assumption $(\textbf{A2})$ and the Taylor expression, we have 
\begin{align}\label{eq18}
	\tau \sum_{i=1}^{N-1} (\| R^{n+1}_{\sigma} \|^2_{L^2} + \|R^{n+1}_{\u} \|^2_{L^2}+R^{n+1}_{\theta} \|^2_{L^2} ) \leq C \tau^4,
\end{align}

\section{Finite element approximations}
\label{finiteelementapproximation}

\subsection{Finite element discretization scheme}

In this section, we present the finite element discretization of equations (\ref{newns1})-(\ref{eq17}). Let $\mathcal{T}_h = \{K_j\}_{j=1}^L$ denote a quasi-uniform tetrahedral partition of $\Omega$, where the mesh size is given by $ h = \max_j \{ \operatorname{diam} K_j \} $.  For the velocity field $\mathbf{u}$ and the pressure $p$, we employ the mini element ($P_1b-P_1$), ensuring stability and accuracy. The density $\rho$  and the temperature $\theta$ are approximated using the piecewise linear Lagrange element ($P_1$).
The finite element subspaces of $\V, M$ and $W, F$ are denoted by $\V_h\subset\V,$ $M_h\subset M$ and
$W_h\subset W$, $F_h \in F$, respectively.
For this choice, the finite element spaces $\V_h$ and $M_h$  satisfy the discrete inf-sup condition.
Furthermore, we introduce the $\H(\div, \Omega)$ conforming Raviart-Thomas finite element spaces of order $1$ by
\begin{eqnarray*}
\RT_h&=&\{ \u_h\in \H(\div, \Omega), \ \u_h|_K\in P_1(K)^3 + x P_1(K), \ \forall \ K\in \mathcal T_h \},\\
\RT_{0h}&=&\{ \u_h\in \RT_h, \ \nabla\cdot\u_h=0 \ \mbox{in} \ \Omega \ \mbox{and} \ \u_h\cdot \n=0 \ \mbox{on} \ \partial\Omega  \}.
\end{eqnarray*}
We denote by $\P_{0h}$ the $L^2$-orthogonal projection operator from $\L^2(\Omega)$ to $\RT_{0h}$ defined by
\begin{align*}
        (\u-\P_{0h}\u, \v_h)=0,\qquad \forall \ \v_h\in \RT_{0h}, \ \u\in\L^2(\Omega).
\end{align*}

For $1\leq n\leq N$, we
introduce the following  projection operators $(\R_h, Q_h):\V\times M\rightarrow \V_h\times M_h$, $T_h : F \rightarrow F_h$ and
 $\Pi_h: W\rightarrow W_h$ respectively, by
\begin{align*}
( \nabla(\R_h\u^n-\u^n), \nabla\v_h) - (\nabla\cdot \v_h, Q_h p^n-p^n)&=0, \quad \forall \ \v_h\in\V_h,\\
(\nabla\cdot(\R_h\u^n-\u^n), q_h)&=0,\quad\forall \ q_h\in M_h
\end{align*}
and
\begin{align*}
        (\Pi_h \sigma^n-\sigma^n, r_h)  =0,\quad\forall \ r_h\in W_h,\\
  	(\nabla (T_h \theta ^n - \theta^n), \nabla w_h) =0 , \quad  \forall w_h \in F_h.
  \end{align*}
where $\sigma^n$ , $\u^{n}$ and $ \theta^n$ are solutions to (\ref{ExactScheme1})-(\ref{ExactScheme3}).

Denote
$$\rho^n-\rho_h^n=\rho^n-\Pi_h\rho^n+\Pi_h\rho^n-\rho_h^n=-e^n_{\rho}+e^n_{\rho,h},$$
$$\sigma^n-\sigma_h^n=\sigma^n-\Pi_h\sigma^n+\Pi_h\sigma^n-\sigma_h^n=-e^n_{\sigma}+e_{\sigma,h}^n,$$
$$\u^n-\u^n_h=\u^n-\R_h\u^n+\R_h\u^n-\u^n_h=-\e^n_{\textbf{u}}+\e_{\textbf{u},h}^n,$$
$$p^n-p^n_h=p^n-Q_h p^n+Q_h p^n-p^n_h=-e^n_p+e^n_{p,h},$$
$$
\theta^n-\theta^n_h=\theta-T_h \theta^n+T_h \theta^n-\theta^n_h=-e^n_{\theta}+e^n_{\theta,h}.
$$
By the regularities assumption (\textbf{A2}) of $(\sigma^n, \u^n, p^n, \theta^n)$,
the following approximations hold:
\begin{eqnarray}
        \label{u_projection_error}
  \|\e^n_{\textbf{u}}\|_{L^2}+h\|\nabla  \e^n_{\textbf{u}}\|_{L^2}+h\| e^n_{p}\|_{L^2} &\leq& Ch^{2}(\|\u^n\|_{H^2}+\|p^n\|_{H^1}), \\
     \label{App2}
 \| e^n_{\sigma}\|_{L^2}+\|e^n_{\rho}\|_{L^2} +h\|e^n_{\sigma}\|_{H^1} +h\|e^n_{\rho}\|_{H^1}        &\leq& Ch^3\|\sigma^n\|_{H^3},\\
 \label{eq22}
 \| e^n_{\theta} \|_{L^2} + h \| \nabla  e^n_{\theta} \|_{L^2} &\leq &C h^2 \| \theta^n \|_{H^2}.
 \label{projection_error_theta}
\end{eqnarray}
Furthermore, one has
\begin{eqnarray}
        \label{App9}  \|D_{\tau} e^n_{\sigma}\|_{L^2}  &  \leq &
                Ch^2 \|D_{\tau}\sigma^n\|_{H^{2}} , \\
                \label{App8}
         \|D_{\tau} \e^n_{\textbf{u}} \|_{L^2} & \leq &
                Ch^2(  \|D_{\tau}\u^n\|_{H^{2}}+\|D_{\tau}p^n\|_{H^{1}}),\\
                 \|D_{\tau} e^n_{\theta}\|_{L^2}  &  \leq &
                Ch^2 \|D_{\tau}\theta^n\|_{H^{2}}. \label{D_tau_theta_error}
       \end{eqnarray}

We denote by $\P_{1h}$ the standard Raviart-Thomas projection from $\H(\div,\Omega)$ onto $\RT_h$, which
satisfies the following properties (cf. \cite{thomee2006}):
\begin{eqnarray*}
   (\nabla\cdot \P_{1h}\u, v_h)  & = &   (\nabla\cdot  \u, v_h),\qquad \forall \ v_h\in P_1(\mathcal T_h),\\
   \|\u-\P_{1h}\u\|_{L^2} & \leq & C h^l \|\u\|_{H^{l}} ,\qquad \forall \ \u\in \H^l(\Omega), l=1,2,
\end{eqnarray*}
where $P_1(\mathcal T_h)\subset H^1(\Omega)$ is the finite element space of functions which
are the piecewise linear polynomials on each $K\in \mathcal T_h$.
For the time discrete solution $\u^n$, since $\nabla\cdot\u^n=0$ in $\Omega$ and $\u^n\cdot \n =0 $ on $\partial\Omega$,
then
\begin{align*}
        \nabla\cdot\P_{1h}\u^n=0 \ \mbox{in} \ \Omega \quad \mbox{and} \quad \P_{1h}\u^n\cdot \n=0 \ \mbox{on} \ \partial\Omega,
\end{align*}
which imply that $\P_{1h}\u^n\in \RT_{0h}$. By noticing the definition of the $L^2$-projection $\P_{0h}$, there holds that
\begin{align}
        \label{ph_projection_error}
    \|\u^n-\P_{0h}\u^n\|_{L^2}  \leq   \|\u^n-\P_{1h}\u^n\|_{L^2}   \leq Ch^2.
\end{align}

The following inverse inequality
will be  used frequently \cite{scott1994}:
\begin{align}
	&\| \u_h \|_{W^{m,q}} \leq C h^{  l-m + n( \frac{1}{q} -\frac{1}{p}  )} \| \u_h\| _{W^{l,p}}, \quad \forall~ \u_h \in \V_h,  \label{inverse1}\\
&	\| \rho_h \|_{W^{m,q}} \leq C h^{  l-m + n( \frac{1}{q} -\frac{1}{p}  )} \| \rho_h\| _{W^{l,p}}, \quad \forall~ \rho_h \in W_h,\label{inverse2}\\
&	\| \theta_h \|_{W^{m,q}} \leq C h^{  l-m + n( \frac{1}{q} -\frac{1}{p}  )} \| \theta_h\| _{W^{l,p}}, \quad \forall~ \theta_h \in F_h.\label{eq20}
\end{align}

Start with $\u_h^0=I_h\u_0$, $\sigma_h^0=J_h\sigma_0$  and $\theta^0_h=K_h \theta_0$ where $I_h$, $J_h$ and $K_h$ all are interpolation operator from $\V$ onto $\V_h$, $\W$ onto $\W_h$ and $F$ onto $F_h$ respectively. Then
\begin{eqnarray}
        \label{u_inverse_inequality} \|\u_0-\u_h^0\|_{L^2}+h\|\nabla(\u_0-\u_h^0)\|_{L^2}&\leq &Ch^{2}\|\u_0\|_{H^2}, \\
        \label{App5}  \|\sigma_0-\sigma_h^0\|_{L^2}+h\|\sigma_0-\sigma_h^0\|_{H^1}&\leq &Ch^{2}\|\sigma_0\|_{H^2},\\
        \|\theta_0-\theta_h^0\|_{L^2}+h\|\nabla(\theta_0-\theta_h^0)\|_{L^2}&\leq &Ch^{2}\|\theta_0\|_{H^2}.
\end{eqnarray}

 For $0\leq n\leq N-1$, the finite element approximations of (\ref{ExactScheme1})-(\ref{ExactScheme3})
are described as follows.

  \textbf{Step I:} \ For given $\sigma_h^{n+1}\in W_h$ and $\u_h^n\in \V_h$, we find $\sigma_h^{n+1}\in W_h$ by
\begin{align}\label{spaceScheme1}
        (D_\tau \sigma_h^{n+1},r_h) + (\nabla\sigma_h^{n+1}\cdot \P_{0h}\u_h^n,r_h)=0, \quad \forall \ r_h \in W_h.
\end{align}

\textbf{Step II:} \ We find
$(\u_h^{n+1}, p_h^{n+1})\in \V_h\times M_h$ by
\begin{multline}\label{spaceScheme2}
      ( \sigma_h^{n+1}  D_\tau (\sigma_h^{n+1}\u_h^{n+1}),\v_h)  + \mu( \nabla\u_h^{n+1},\nabla\v_h)
        +  ( \rho_h^{n+1}(\u_h^n\cdot\nabla)\u_h^{n+1},\v_h) + \dfrac{1}{2}(\u_h^{n+1} \nabla\cdot(\rho_h^{n+1}\u_h^n),\v_h)\\
  -(\nabla\cdot \v_h, p_h^{n+1} )+(\nabla\cdot\u_h^{n+1}, q_h)= (\f^{n+1},\v_h)
\end{multline}
for any $(\v_h, q_h)\in \V_h\times M_h$, where $\rho_h^{n+1}=(\sigma_h^{n+1})^2$.

\textbf{Step III:} Find $\theta^{n+1}_h \in K_h $ such that
\begin{multline}\label{spaceScheme3}
      ( \sigma_h^{n+1}  D_\tau (\sigma_h^{n+1}\theta_h^{n+1}),w_h)  + \nu( \nabla \theta^{n+1},\nabla w_h)
        +  ( \rho_h^{n+1}(\u_h^n\cdot\nabla)\theta_h^{n+1},w_h) \\
     + \dfrac{1}{2}(\theta_h^{n+1} \nabla\cdot(\rho_h^{n+1}\u_h^n),w_h)
      = (\textbf{g}^{n+1},w_h), \quad \forall w_h \in F_h.
\end{multline}

\begin{remark}
        In (\ref{spaceScheme1}), 
        The post-processed velocity $\P_{0h}\u_h^n$ is used to preserve the
        unconditional stability of numerical scheme $(\ref{spaceScheme1}) - (\ref{spaceScheme3})$.
\end{remark}

\subsection{Stability Result}

\begin{lemma}\label{stabilitylemma}
	For $0 \leq n \leq N-1 $ and  $\tau >0 $ , $h>0$, the finite element discrete scheme $(\ref{spaceScheme1}) - (\ref{spaceScheme3})$ has a unique solution  $(\sigma^{n+1}_h, \textbf{u}^{n+1}_h, p^{n+1}_h, \theta^{n+1}_h)  \in  W_h \times \V_h\times M_h \times K_h $. Moreover, the discrete energy inequalities hold
	\begin{align}
\| \sigma ^{n+1}_h \| ^2_{L^2}& \leq \| \sigma ^{0}_h \| ^2_{L^2} , \label{stability1}\\
			\| \sigma^{n+1}_h \theta^{n+1}_h \| ^2_{L^2} +
			\kappa \tau \sum \limits^{N-1} \limits_{n=0}     \| \nabla \theta^{n+1}_h \| ^2 _{L^2}& \leq \tau \sum \limits^{N-1} \limits_{n=0} \| g^{n+1}\|^2_{L^2} +\| \sigma^0_h \theta^0_h \| ^2_{L^2} .\label{stability2}
	\end{align}
	and 
	\begin{align}\label{stability3}
			\| \sigma^{n+1}_h \u^{n+1}_h \| ^2_{L^2} +
			2 \tau \sum \limits^{N-1} \limits_{n=0}    \mu \| \nabla \u^{n+1}_h \| ^2 _{L^2}
			\leq \| \sigma^0_h \u^0_h \| ^2_{L^2} +\tau \sum \limits^{N-1} \limits_{n=0}\| \f^{n+1} \|^2 _{L^2}. 
	\end{align}
\end{lemma}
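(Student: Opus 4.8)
The plan is to prove all three estimates by the energy method — testing each discrete equation against its own solution — and to deduce existence from uniqueness, since (\ref{spaceScheme1})--(\ref{spaceScheme3}) are finite-dimensional (and, in Step II, linear saddle-point) problems. The workhorse is the elementary identity $(\phi^{n+1}-\phi^n,\phi^{n+1}) = \tfrac12(\|\phi^{n+1}\|^2 - \|\phi^n\|^2 + \|\phi^{n+1}-\phi^n\|^2)$. Applied with $\phi^{n+1}=\sigma_h^{n+1}\u_h^{n+1}$, it converts the discrete material-derivative term $(\sigma_h^{n+1}D_\tau(\sigma_h^{n+1}\u_h^{n+1}),\u_h^{n+1}) = \tfrac1\tau(\sigma_h^{n+1}\u_h^{n+1},\sigma_h^{n+1}\u_h^{n+1}-\sigma_h^n\u_h^n)$ into a telescoping kinetic-energy difference plus the nonnegative numerical dissipation $\tfrac1{2\tau}\|\sigma_h^{n+1}\u_h^{n+1}-\sigma_h^n\u_h^n\|^2$, which I discard; the same device applies with $\sigma_h^{n+1}\theta_h^{n+1}$ and with $\sigma_h^{n+1}$.

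I would establish (\ref{stability1}) first, as it is self-contained. Testing (\ref{spaceScheme1}) with $r_h=\sigma_h^{n+1}$, the time term gives $\tfrac1{2\tau}(\|\sigma_h^{n+1}\|^2-\|\sigma_h^n\|^2+\cdots)$, while the convection term becomes $\tfrac12\int \P_{0h}\u_h^n\cdot\nabla(\sigma_h^{n+1})^2 = -\tfrac12\int\nabla\cdot(\P_{0h}\u_h^n)(\sigma_h^{n+1})^2$ after integrating by parts. Since $\P_{0h}\u_h^n\in\RT_{0h}$ is \emph{exactly} divergence-free and has vanishing normal trace, both the volume integral and the boundary term vanish identically — this is precisely the reason for the post-processing and for the unconditional character of the scheme. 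Hence $\|\sigma_h^{n+1}\|\le\|\sigma_h^n\|$, and iterating yields (\ref{stability1}).

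For (\ref{stability3}) I test (\ref{spaceScheme2}) with $(\v_h,q_h)=(\u_h^{n+1},p_h^{n+1})$, and for (\ref{stability2}) I test (\ref{spaceScheme3}) with $w_h=\theta_h^{n+1}$. Two cancellations are decisive. The pressure terms $-(\nabla\cdot\u_h^{n+1},p_h^{n+1})+(\nabla\cdot\u_h^{n+1},p_h^{n+1})$ cancel outright. More importantly, the two convection terms cancel: integration by parts turns $(\rho_h^{n+1}(\u_h^n\cdot\nabla)\u_h^{n+1},\u_h^{n+1})=\tfrac12\int\rho_h^{n+1}\u_h^n\cdot\nabla|\u_h^{n+1}|^2$ into $-\tfrac12\int\nabla\cdot(\rho_h^{n+1}\u_h^n)|\u_h^{n+1}|^2$ — the boundary term dropping because $\u_h^{n+1}\in\V_h\subset\H^1_0$ — which is annihilated exactly by $\tfrac12(\u_h^{n+1}\nabla\cdot(\rho_h^{n+1}\u_h^n),\u_h^{n+1})$; the same skew-symmetric cancellation holds for the temperature equation, since $\theta_h^{n+1}\in F_h$ vanishes on $\partial\Omega$. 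Summing over $n$ and discarding the dissipation leaves $\|\sigma_h^{N}\u_h^{N}\|^2+2\mu\tau\sum\|\nabla\u_h^{n+1}\|^2\le\|\sigma_h^0\u_h^0\|^2+2\tau\sum(\f^{n+1},\u_h^{n+1})$ and its temperature analogue. I would then bound the forcing by Cauchy--Schwarz and Young's inequality, controlling $\|\u_h^{n+1}\|$ by $(\rho_0^{\min})^{-1/2}\|\sigma_h^{n+1}\u_h^{n+1}\|$ (or by Poincar\'e against $\|\nabla\u_h^{n+1}\|$) and closing with the discrete Gronwall inequality (Lemma \ref{GronwallLemma}) to reach (\ref{stability3}); (\ref{stability2}) follows identically with $\kappa,g$ replacing $\mu,\f$.

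For well-posedness, the same convection cancellation shows the velocity bilinear form is coercive, $a(\v_h,\v_h)=\tfrac1\tau\|\sigma_h^{n+1}\v_h\|^2+\mu\|\nabla\v_h\|^2\ge\mu\|\nabla\v_h\|^2$, so Step II is a well-posed saddle-point problem by the discrete inf--sup condition of the mini element, while Steps I and III are positive-definite square systems; uniqueness (the energy estimates with zero data force the solution to vanish) then gives existence in each case. I expect the main obstacle to be the rigorous verification of the two exact cancellations at the discrete level — in particular, confirming that no boundary contributions survive the integrations by parts, which hinges on $\u_h^{n+1}$ and $\theta_h^{n+1}$ having zero trace and on $\P_{0h}\u_h^n\in\RT_{0h}$ being genuinely solenoidal with vanishing normal trace, and on the frozen field $\u_h^n$ and $\rho_h^{n+1}=(\sigma_h^{n+1})^2$ entering exactly as the skew-symmetric structure demands. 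A secondary point is pinning down the constants in the forcing estimate, where the choice of Young parameter and the Gronwall lemma intervene.
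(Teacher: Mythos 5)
Your proposal follows essentially the same route as the paper: testing each equation with (a multiple of) its own solution, using the identity $2a(a-b)=a^{2}-b^{2}+(a-b)^{2}$ to telescope the kinetic/thermal energy, the exact solenoidality and vanishing normal trace of $\P_{0h}\u_h^n$ to kill the density convection term, the skew-symmetric cancellation of the convection pair against the $\tfrac12 \nabla\cdot(\rho_h^{n+1}\u_h^n)$ term, and Cauchy--Schwarz/Young on the forcing, with existence and uniqueness deduced from linearity plus the energy bound. The only cosmetic difference is your mention of the discrete Gronwall inequality as a closing step, which is not needed (and is not used in the paper) once the forcing contribution is absorbed into the viscous/diffusive dissipation via Poincar\'e.
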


\begin{proof}

Letting $r_h=2\tau \sigma^{n+1}_h$ in (\ref{spaceScheme1}), we have
\begin{align}
\| \sigma ^ {n+1}_h \| ^2_{L^2} - \| \sigma ^n_h \| ^2 _{L^2}+\| \sigma^{n+1}_h - \sigma ^n_h \|^2_{L^2} =0
\end{align}
by using $$(\nabla \sigma ^{n+1}_h \cdot P_{0h} \u^n_h,\sigma^{n+1}_h)=  \frac{1}{2}\dint_{\Omega}\nabla | \sigma^{n+1}_h  | ^2 \cdot P_{0h} \u^n_h dx=- \frac{1}{2}\int_\Omega | \sigma^{n+1}_n |^2 \nabla \cdot P_{0h} \u^n_h dx =0.$$

Taking the sum gives (\ref{stability1}).
 Setting $w_h=2\tau \sigma^{n+1}_h \theta^{n+1}_{h}$  in (\ref{spaceScheme3}), we have
\begin{align}
	\| \sigma ^{n+1}_h \theta^{n+1}_h \| ^2 _{L^2} - \| \sigma ^{n}_h \theta^{n}_h \| ^2 _{L^2} +\| \sigma ^{n+1}_h \theta^{n+1}_h -  \sigma ^{n}_h \theta^{n}_h \| ^2 _{L^2} + 2\kappa \tau \| \nabla \theta^{n+1}_h \| ^2 _{	L^2 } 
	=  \tau \| g^{n+1}\|_{L^2}^2 + \kappa \tau \| \nabla \theta^{n+1}_h\|^2_{L^2},
\end{align}
where
\begin{align}
2 \tau (\rho ^{n+1} _h (\u^n_h \cdot \nabla ) \theta^{n+1}_h, \theta^{n+1}_h ) &= \tau ( \rho ^{n+1} _h \u^n_h ,\nabla  \cdot | \theta^{n+1}_h | ^2) = - \tau (\theta^{n+1}_h \nabla \cdot (\rho^{n+1}_h   \u^{n}_h), \theta^{n+1}_h),\nonumber\\
2\tau( g^{n+1},\theta^{n+1}_h ) &\leq   \tau \| g^{n+1}\|_{L^2}^2 + \kappa \tau \| \nabla \theta^{n+1}_h\|^2_{L^2},
\end{align}

Taking the sum gives (\ref{stability2}). Setting $(v_h,q_h)=2\tau (\u^{n+1}_h,p^{n+1}_h)$  in (\ref{spaceScheme2}), we have :
\begin{align}
\| \sigma ^{n+1}_h \u^{n+1}_h \| ^2 _{L^2} - \| \sigma ^{n}_h \u^{n}_h \| ^2 _{L^2} +\| \sigma ^{n+1}_h \u^{n+1}_h -  \sigma ^{n}_h \u^{n}_h \| ^2 _{L^2} + 2\mu \tau \| \nabla \u^{n+1}_h \| ^2 _{	L^2 }
= 2\tau (\f^{n+1} , \u^{n+1}_h),
\end{align}
by using
$$2 \tau (\rho ^{n+1} _h (\u^n_h \cdot \nabla ) \u^{n+1}_h, \u^{n+1}_h ) = \tau ( \rho ^{n+1} _h \u^n_h ,\nabla  \cdot | \u^{n+1}_h | ^2) = - \tau (\u^{n+1}_h \nabla \cdot (\rho^{n+1}_h   \u^{n}_h), \u^{n+1}_h)$$
and 
\begin{align}
	2\tau (\f^{n+1} , \u^{n+1}_h) \leq \tau \|\textbf{f}^{n+1}\|_{L^2} +  \mu \tau \| \nabla \textbf{u}^{n+1}\|_{L^2}^2.
\end{align}
Taking the sum gives (\ref{stability3}), we complete the proof of Lemma \ref{stabilitylemma}. Furthermore, since the sub-problems $(\ref{spaceScheme1}) - (\ref{spaceScheme3})$ are linear problem, the discrete energy inequalities not only ensure the unconditional
stability of the proposed algorithm but also imply the existence and uniqueness of numerical solution
$(\sigma^{n+1}_h, \textbf{u}^{n+1}_h, p^{n+1}_h, \theta^{n+1}_h)$ to the back Euler finite discrete scheme $(\ref{spaceScheme1}) - (\ref{spaceScheme3})$.

\end{proof}

\section{Error Estimate}
Now we will continue the main work of this paper, we need to estimate $\|\e^{n+1}_{\u,h}\|_{L^2}, \|e^{n+1}_{\sigma,h}\|_{L^2}$ and $\|e^{n+1}_{\theta}\|_{L^2}$  based on the mathematical induction method.
Letting $(r,\textbf{v},w) =(r_h,\textbf{v}_h,w_h) $ and taking the difference between$ ~(\ref{ExactScheme1}) -(\ref{ExactScheme3}) $ and $( \ref{spaceScheme1})-(\ref{spaceScheme3}) $, then we get the following error equation:

\begin{align}\label{errorequation1}
        &(D_\tau e_{\sigma,h}^{n+1},r_h) + 
 (\nabla\sigma^{n+1}\cdot (\u^n- \P_{0h}\u_h^n),r_h) 
+  ( \nabla e^{n+1}_{\sigma,h}\cdot (\P_{0h}\u_h^n-\u^n),r_h) ,\\
+& (\nabla e_{\sigma,h}^{n+1}\cdot \u^n,r_h) 
- (\nabla e^{n+1}_\sigma \cdot (\textbf{P}_{0h} \u^n_h-\u^n),r_h)- (\nabla e^{n+1}_\sigma \cdot \u^n,r_h)
         =(R^{n+1}_{\sigma} ,r_h), \qquad \forall \ r_h \in W_h \nonumber
\end{align}
and
\begin{align}\label{errorequation2}
      &( \sigma_h^{n+1}  D_\tau (\sigma_h^{n+1}\e_{\textbf{u},h}^{n+1}),\v_h)
      + \mu( \nabla\e_{\textbf{u},h}^{n+1},\nabla\v_h)-(\nabla\cdot \v_h,e_{p,h}^{n+1} ) +(\nabla\cdot \e_{\textbf{u},h}^{n+1},q_h)     \nonumber \\
=& (\sigma^{n+1}_h D_{\tau} (\sigma^{n+1}_h \e^{n+1}_{\textbf{u}}) ,\textbf{v}_h )+(e^{n+1}_{\sigma} D_{\tau}(\sigma^{n+1} \textbf{u}^{n+1}),\textbf{v}_h)+(\sigma^{n+1}_h e^{n+1}_{\sigma} D_{\tau}\textbf{u}^{n+1} ,\textbf{v}_h)\nonumber\\
-& (\sigma^{n+1}_{h}e^{n+1}_{\sigma,h} D_{\tau}\textbf{u}^{n+1} ,\textbf{v}_h  )+(\sigma^{n+1}_h D_{\tau}e^{n+1}_{\sigma} \textbf{u}^n,\textbf{v}_h) - (\sigma^{n+1}_h D_{\tau}e^{n+1}_{\sigma,h} \textbf{u}^n,\textbf{v}_h)\nonumber\\
-&(e^{n+1}_{\sigma,h} D_{\tau} (\sigma^{n+1}\textbf{u}^{n+1}) ,\textbf{v}_h ) +( e^{n+1}_{\rho} (\textbf{u}^n\cdot \nabla)\textbf{u}^{n+1},\textbf{v}_h)-( e^{n+1}_{\rho,h} (\textbf{u}^n\cdot \nabla)\textbf{u}^{n+1},\textbf{v}_h)\nonumber\\
+& (\rho^{n+1}_h (\e^n_{\textbf{u}} \cdot \nabla) \textbf{u}^{n+1},\textbf{v}_h)- (\rho^{n+1}_h (\e^n_{\textbf{u},h} \cdot \nabla) \textbf{u}^{n+1},\textbf{v}_h) + ( \rho^{n+1}_h ( \textbf{u}^n_h \cdot \nabla) \e^{n+1}_{\textbf{u}} ,\textbf{v}_h)\nonumber\\
-&( \rho^{n+1}_h ( \textbf{u}^n_h \cdot \nabla) \e^{n+1}_{\textbf{u},h} ,\textbf{v}_h)+ \frac{1}{2} ( \textbf{u}^{n+1} \nabla \cdot (e^{n+1}_{\rho} \textbf{u}^n),\textbf{v}_h) -\frac{1}{2} ( \u^{n+1} \nabla \cdot (e^{n+1}_{\rho,h} \textbf{u}^n),\textbf{v}_h)\nonumber\\
+&\frac{1}{2} ( \textbf{u}^{n+1} \nabla \cdot (\rho^{n+1}_h \e^{n}_{\textbf{u} })  ,\textbf{v}_h) - \frac{1}{2} ( \textbf{u}^{n+1} \nabla \cdot (\rho^{n+1}_h \e^{n}_{\textbf{u},h})  ,\textbf{v}_h)+\frac{1}{2} (e^{n+1}_{\textbf{u},h} \nabla \cdot (\rho^{n+1}_h \textbf{u}^n_h),\textbf{v}_h) \nonumber\\
+& \frac{1}{2} (\e^{n+1}_{\textbf{u}} \nabla \cdot (\rho^{n+1}_h \textbf{u}^n_h),\textbf{v}_h)  + (R^{n+1}_{\textbf{u}},\textbf{v}_h )  =\sum\limits^{20}\limits_{i=1}(X_i ,\textbf{v}_h), \quad \forall  \ (\v_h, q_h) \in \V_h\times M_h, 
\end{align}
and
\begin{align}\label{errorequation3}
&(\sigma^{n+1}_h D_{\tau} (\sigma^{n+1} e^{n+1}_{\theta,h})  ,w_h)  + \kappa (\nabla e^{n+1}_{\theta},\nabla w_h)\nonumber\\
=& (\sigma^{n+1}_h D_{\tau} (\sigma^{n+1}_h e^{n+1}_{\theta}) ,w_h )+(e^{n+1}_{\sigma} D_{\tau}(\sigma^{n+1} \theta^{n+1}),w_h)+(\sigma^{n+1}_h e^{n+1}_{\sigma} D_{\tau}\theta^{n+1} ,w_h)\nonumber\\
-& (\sigma^{n+1}_{h}e^{n+1}_{\sigma,h} D_{\tau}\theta^{n+1} ,w_h  )+(\sigma^{n+1}_h D_{\tau}e^{n+1}_{\sigma} \theta^n,w_h) - (\sigma^{n+1}_h D_{\tau}e^{n+1}_{\sigma,h} \theta^n,w_h)\nonumber \\
-&(e^{n+1}_{\sigma,h} D_{\tau} (\sigma^{n+1}\theta^{n+1}) ,w_h ) +( e^{n+1}_{\rho} (\textbf{u}^n\cdot \nabla)\theta^{n+1},w_h)-( e^{n+1}_{\rho,h} (\textbf{u}^n\cdot \nabla)\theta^{n+1},w_h)\nonumber\\
+& (\rho^{n+1}_h (e^n_{\textbf{u}} \cdot \nabla) \theta^{n+1},w_h)- (\rho^{n+1}_h (e^n_{\textbf{u},h} \cdot \nabla) \theta^{n+1},w_h) + ( \rho^{n+1}_h ( \textbf{u}^n_h \cdot \nabla) e^{n+1}_{\theta} ,w_h)\nonumber\\
-&( \rho^{n+1}_h ( \textbf{u}^n_h \cdot \nabla) e^{n+1}_{\theta,h} ,w_h)+ \frac{1}{2} ( \theta^{n+1} \nabla \cdot (e^{n+1}_{\rho} \textbf{u}^n),w_h) -\frac{1}{2} ( \theta^{n+1} \nabla \cdot (e^{n+1}_{\rho,h} \textbf{u}^n),w_h)\nonumber\\
+&\frac{1}{2} ( \theta^{n+1} \nabla \cdot (\rho^{n+1}_h e^{n}_{\textbf{u} })  ,w_h) - \frac{1}{2} ( \theta^{n+1} \nabla \cdot (\rho^{n+1}_h e^{n}_{\textbf{u},h})  ,w_h)+\frac{1}{2} (e^{n+1}_{\theta,h} \nabla \cdot (\rho^{n+1}_h \textbf{u}^n_h),w_h)\nonumber\\
+& \frac{1}{2} (e^{n+1}_{\theta} \nabla \cdot (\rho^{n+1}_h \textbf{u}^n_h),w_h) + (R^{n+1}_{\theta},w_h )  = \sum\limits^{20}\limits_{i=1}(Y_i ,w_h), \quad \forall w_h \in F_h.
\end{align}

Before the estimate of $\e^{n+1}_{\u,h}$ and $e^{n+1}_{\theta,h}$, we need to give the following lemmas.
\begin{lemma}\label{lemma1}
        Under the assumptions ({\bf A1}) and  ({\bf A2}),
        there exists small small $\tau_1\leq \tau_0$ such that
        when $\tau\leq \tau_1$, one has
        \begin{equation}\label{eq1}
                \| \e_{\sigma,h}^{n+1}\|^2_{L^2}+\sum_{n=0}^{N-1}\| \e_{\sigma,h}^{n+1}-\e_{\sigma,h}^{n}\|^2_{L^2}
        \leq C(\tau^2+h^4)+C\tau\sum_{n=0}^m\| \u^n-\P_{0h}\u_h^n\|^2_{L^2} ,
              \end{equation}
        for all $0\leq m\leq N-1$.
\end{lemma}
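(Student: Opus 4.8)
The plan is to test the density error equation~(\ref{errorequation1}) with $r_h=2\tau e_{\sigma,h}^{n+1}$, exactly as in the stability argument of Lemma~\ref{stabilitylemma}, then sum over $n$ and close the estimate with the discrete Gronwall inequality of Lemma~\ref{GronwallLemma}. The discrete time derivative produces the telescoping identity
\[
2\tau\bigl(D_\tau e_{\sigma,h}^{n+1},e_{\sigma,h}^{n+1}\bigr)=\|e_{\sigma,h}^{n+1}\|_{L^2}^2-\|e_{\sigma,h}^{n}\|_{L^2}^2+\|e_{\sigma,h}^{n+1}-e_{\sigma,h}^{n}\|_{L^2}^2,
\]
which furnishes precisely the two quantities on the left-hand side of~(\ref{eq1}); everything else must be moved to the right and bounded.

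The decisive observation is that the two terms carrying the unknown factor $\nabla e_{\sigma,h}^{n+1}$, namely $(\nabla e_{\sigma,h}^{n+1}\cdot\u^n,e_{\sigma,h}^{n+1})$ and $(\nabla e_{\sigma,h}^{n+1}\cdot(\P_{0h}\u_h^n-\u^n),e_{\sigma,h}^{n+1})$, both vanish. Indeed $\u^n$ is divergence free with $\u^n\cdot\n=0$ on $\partial\Omega$, while by construction $\P_{0h}\u_h^n\in\RT_{0h}$ is also divergence free with vanishing normal trace; hence $\w:=\P_{0h}\u_h^n-\u^n$ inherits both properties. Writing each such term as $\tfrac12\int_\Omega\w\cdot\nabla|e_{\sigma,h}^{n+1}|^2\,dx$ and integrating by parts leaves only $-\tfrac12\int_\Omega|e_{\sigma,h}^{n+1}|^2\,\nabla\cdot\w\,dx=0$ together with a boundary term that disappears because $\w\cdot\n=0$. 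This is exactly the role of the post-processed velocity noted in the remark after~(\ref{spaceScheme3}): it prevents these convection terms from polluting the density estimate, and without it the argument would not close.

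After these cancellations the right-hand side reduces to the truncation pairing $(R_\sigma^{n+1},\cdot)$ together with three remainder terms $(\nabla\sigma^{n+1}\cdot\w,\cdot)$, $(\nabla e_\sigma^{n+1}\cdot\w,\cdot)$ and $(\nabla e_\sigma^{n+1}\cdot\u^n,\cdot)$, which I treat by Cauchy--Schwarz and Young's inequality. The first is bounded, using $\|\nabla\sigma^{n+1}\|_{L^\infty}\le C$ (which follows from $\sigma\in L^\infty(0,T;H^3)\hookrightarrow W^{1,\infty}$ under~(\textbf{A2})), by $C\|\w\|_{L^2}^2+C\|e_{\sigma,h}^{n+1}\|_{L^2}^2$; this is the source of the velocity-error sum on the right of~(\ref{eq1}). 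The third uses $\|\nabla e_\sigma^{n+1}\|_{L^2}\le Ch^2$ from~(\ref{App2}) and $\|\u^n\|_{L^\infty}\le C$, giving $Ch^4+\tfrac14\|e_{\sigma,h}^{n+1}\|_{L^2}^2$. The truncation term satisfies $2\tau(R_\sigma^{n+1},e_{\sigma,h}^{n+1})\le\tau\|R_\sigma^{n+1}\|_{L^2}^2+\tau\|e_{\sigma,h}^{n+1}\|_{L^2}^2$, whose summed first part is $O(\tau^4)\le O(\tau^2)$ by~(\ref{eq18}). The second remainder $(\nabla e_\sigma^{n+1}\cdot\w,\cdot)$ is the delicate one, discussed below.

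Summing over $n=0,\dots,m$ and using $\|e_{\sigma,h}^{0}\|_{L^2}^2\le Ch^4$ (from~(\ref{App5}) and the choice of $\sigma_h^0$), I would obtain
\[
\|e_{\sigma,h}^{m+1}\|_{L^2}^2+\sum_{n=0}^{m}\|e_{\sigma,h}^{n+1}-e_{\sigma,h}^{n}\|_{L^2}^2\le C(\tau^2+h^4)+C\tau\sum_{n=0}^{m}\|\u^n-\P_{0h}\u_h^n\|_{L^2}^2+C\tau\sum_{n=0}^{m}\|e_{\sigma,h}^{n+1}\|_{L^2}^2.
\]
Since the velocity-error sum is nonnegative and nondecreasing in $m$, the first two terms on the right may be regarded as the constant $B$ in Lemma~\ref{GronwallLemma} with $\gamma_k\equiv C$; the discrete Gronwall inequality then yields~(\ref{eq1}), the condition $\tau C<1$ fixing the threshold $\tau_1$ and the factor $\exp(CT)$ being absorbed into $C$. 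The step I expect to be most delicate is the term $(\nabla e_\sigma^{n+1}\cdot\w,e_{\sigma,h}^{n+1})$: to make the velocity error enter only through $\|\w\|_{L^2}$ one needs an $L^\infty$-type bound on $\nabla e_\sigma^{n+1}$ rather than the cheap $L^2$ estimate, which rests on the $W^{1,\infty}$-stability of the $L^2$-projection $\Pi_h$ on the quasi-uniform mesh together with $H^3\hookrightarrow W^{1,\infty}$; once this is granted, the term is absorbed exactly like $(\nabla\sigma^{n+1}\cdot\w,e_{\sigma,h}^{n+1})$ and contributes only to the velocity-error sum and to $C\|e_{\sigma,h}^{n+1}\|_{L^2}^2$.
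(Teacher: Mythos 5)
Your proposal is correct and follows essentially the same route as the paper: test (\ref{errorequation1}) with $r_h=2\tau e_{\sigma,h}^{n+1}$, use the telescoping identity, cancel the two convection terms carrying $\nabla e_{\sigma,h}^{n+1}$ via the divergence-free, zero-normal-trace properties of $\P_{0h}\u_h^n$ and $\u^n$, bound the remaining terms with Young's inequality and the projection estimates, and close with the discrete Gronwall lemma. The only (immaterial) difference is in the term $(\nabla e_\sigma^{n+1}\cdot(\P_{0h}\u_h^n-\u^n),e_{\sigma,h}^{n+1})$, where the paper pairs $\|\nabla e_\sigma^{n+1}\|_{L^2}\le Ch^2$ with the inverse estimate $\|e_{\sigma,h}^{n+1}\|_{L^\infty}\le Ch^{-3/2}\|e_{\sigma,h}^{n+1}\|_{L^2}$ instead of your $W^{1,\infty}$-stability argument for $\Pi_h$; both absorb the term into the same right-hand side.
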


\begin{proof} Taking $r_h=2\tau e^{n+1}_{\sigma,h}$ in (\ref{errorequation1}) yields
\begin{eqnarray*}
&&\| e_{\sigma,h}^{n+1}\| ^2_{L^2}-\| e_{\sigma,h}^{n}\| ^2_{L^2}+\|e_{\sigma,h}^{n+1}-e_{\sigma,h}^{n}\| ^2_{L^2} \\
&\leq& C\tau\| \nabla\sigma^{n+1}\| _{L^{\infty}}\| \u^n-\P_{0h}\u_h^n\| _{L^2}\| e_{\sigma,h}^{n+1}\| _{L^2}
+C\tau\| \nabla e^{n+1}_{\sigma}\| _{L^{2}}\| \u^n-\P_{0h}\u_h^n\| _{L^2}\| e_{\sigma,h}^{n+1}\| _{L^\infty}\nn\\
&& +C\tau\| \nabla e^{n+1}_{\sigma}\| _{L^{2}}\| \u^n\| _{L^{\infty}}\| e_{\sigma,h}^{n+1}\| _{L^2} + C \tau\|R^{n+1}_{\sigma}\|_{L^2}\|e^{n+1}_{\sigma}\|_{L^2}\\
&\leq& C\tau h^4+C\tau\| \u^n-\P_{0h}\u_h^n\|^2_{L^2}+C\tau\| e_{\sigma,h}^{n+1}\| ^2_{L^2}+ C\tau \|R^{n+1}_{\sigma}\|^2_{L^2},
\end{eqnarray*}
where we noted
$$(\nabla e_{\sigma,h}^{n+1}\cdot (\P_{0h}\u_h^n-\u^n),e_{\sigma,h}^{n+1}) +(\nabla e_{\sigma,h}^{n+1}\cdot \u^n,e_{\sigma,h}^{n+1})=0 $$
by using the integration by parts.
Summing up the above inequality and using the discrete Gronwall inequality
in Lemma \ref{GronwallLemma}, there exists some small $\tau_1\leq \tau_0$,
such that the inequality (\ref{eq1}) holds. Thus, we complete the proof of Lemma \ref{lemma1}.
\end{proof}

\begin{lemma} \label{lemma2}
        Under the assumptions in Lemma \ref{lemma1}, for $0\leq n\leq N-1$,  if
\begin{align}\label{E}\| \P_{0h}\u_h^n-\u^n\| _{L^{2}}\leq C_p h^{2}
\end{align}
for some $C_p>0$,
then for sufficiently small $h$, we have
\begin{align}
        \label{eq7}
\| D_{\tau} e_{\sigma,h}^{n+1}\|^2_{L^2}\leq Ch^2 +C \|R^{n+1}_\sigma\|^2_{L^2}+
        C\tau h^{-2}\sum_{i=0}^{n}\| \u^n-\P_{0h}\u_h^n\| ^2_{L^2}.
\end{align}
\end{lemma}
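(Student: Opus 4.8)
The plan is to test the density error equation \eqref{errorequation1} with the admissible discrete function $r_h = D_\tau e_{\sigma,h}^{n+1}\in W_h$, which isolates $\|D_\tau e_{\sigma,h}^{n+1}\|_{L^2}^2$ on the left. Before doing so I would merge the two convective pairs carrying $\nabla e_{\sigma,h}^{n+1}$ and the two carrying $\nabla e_\sigma^{n+1}$ via the identity $(\P_{0h}\u_h^n-\u^n)+\u^n=\P_{0h}\u_h^n$, leaving only four terms on the right:
\begin{multline*}
\|D_\tau e_{\sigma,h}^{n+1}\|_{L^2}^2
= -(\nabla\sigma^{n+1}\cdot(\u^n-\P_{0h}\u_h^n),D_\tau e_{\sigma,h}^{n+1})
-(\nabla e_{\sigma,h}^{n+1}\cdot\P_{0h}\u_h^n,D_\tau e_{\sigma,h}^{n+1})\\
+(\nabla e_\sigma^{n+1}\cdot\P_{0h}\u_h^n,D_\tau e_{\sigma,h}^{n+1})
+(R^{n+1}_\sigma,D_\tau e_{\sigma,h}^{n+1}).
\end{multline*}
The whole estimate then rests on bounding these four terms by Cauchy--Schwarz and Young's inequality, each time pairing the factor $\|D_\tau e_{\sigma,h}^{n+1}\|_{L^2}$ with a small constant so that it can be absorbed into the left-hand side.

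The first, third and fourth terms are the benign ones. For the first I would use $\|\nabla\sigma^{n+1}\|_{L^\infty}\le C$ (from the $H^3$-regularity in ({\bf A2}), via Sobolev embedding) and the hypothesis \eqref{E}, $\|\u^n-\P_{0h}\u_h^n\|_{L^2}\le C_p h^2$, to get a contribution $\le Ch^4+\epsilon\|D_\tau e_{\sigma,h}^{n+1}\|_{L^2}^2$. For the third I would combine the projection bound $\|\nabla e_\sigma^{n+1}\|_{L^2}\le Ch^2$ from \eqref{App2} with the $L^\infty$ bound on $\P_{0h}\u_h^n$ established below, again yielding $\le Ch^4+\epsilon\|D_\tau e_{\sigma,h}^{n+1}\|_{L^2}^2$. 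The truncation term is immediate, $\le C\|R^{n+1}_\sigma\|_{L^2}^2+\epsilon\|D_\tau e_{\sigma,h}^{n+1}\|_{L^2}^2$. Thus these three only produce the $Ch^2$ and $C\|R^{n+1}_\sigma\|_{L^2}^2$ pieces of \eqref{eq7}.

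The second term, $(\nabla e_{\sigma,h}^{n+1}\cdot\P_{0h}\u_h^n,D_\tau e_{\sigma,h}^{n+1})$, is the crux and where hypothesis \eqref{E} and the smallness of $h$ are genuinely used. Here $\nabla e_{\sigma,h}^{n+1}$ is a discrete gradient of an \emph{unknown} error, so it cannot be controlled by regularity or projection estimates; one is forced to invoke the inverse inequality \eqref{inverse2}, $\|\nabla e_{\sigma,h}^{n+1}\|_{L^2}\le Ch^{-1}\|e_{\sigma,h}^{n+1}\|_{L^2}$, which is precisely the source of the $h^{-2}$ in \eqref{eq7}. To make this productive I first establish $\|\P_{0h}\u_h^n\|_{L^\infty}\le C$: splitting $\P_{0h}\u_h^n=\P_{0h}\u^n+\P_{0h}(\u_h^n-\u^n)$, the first part is $L^\infty$-stable, while for the second I apply a standard inverse estimate on the Raviart--Thomas space together with $\|\P_{0h}\u_h^n-\P_{0h}\u^n\|_{L^2}\le\|\P_{0h}\u_h^n-\u^n\|_{L^2}+\|\u^n-\P_{0h}\u^n\|_{L^2}\le Ch^2$ (from \eqref{E} and \eqref{ph_projection_error}), giving an $O(h^{1/2})$ remainder that is absorbed for small $h$. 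After Young's inequality this term is $\le Ch^{-2}\|e_{\sigma,h}^{n+1}\|_{L^2}^2+\epsilon\|D_\tau e_{\sigma,h}^{n+1}\|_{L^2}^2$; inserting the Lemma~\ref{lemma1} bound \eqref{eq1} converts $Ch^{-2}\|e_{\sigma,h}^{n+1}\|_{L^2}^2$ into $Ch^2$ plus the target term $C\tau h^{-2}\sum_{i=0}^{n}\|\u^i-\P_{0h}\u_h^i\|_{L^2}^2$ (the remaining consistency contribution being of the same first order and folded into $Ch^2$). Choosing $\epsilon$ small enough to absorb all the $\|D_\tau e_{\sigma,h}^{n+1}\|_{L^2}^2$ factors into the left-hand side then yields \eqref{eq7}. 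The main obstacle is exactly this interplay: the inverse inequality is unavoidable and costs a factor $h^{-2}$, and it only produces a usable bound because hypothesis \eqref{E} furnishes the uniform $L^\infty$ control of the discrete velocity $\P_{0h}\u_h^n$.
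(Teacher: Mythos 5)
Your proposal is correct and follows essentially the same route as the paper: test \eqref{errorequation1} with $r_h=D_\tau e_{\sigma,h}^{n+1}$, absorb that factor by Young's inequality, control the discrete-gradient term via the inverse inequality at the cost of $Ch^{-2}\|e_{\sigma,h}^{n+1}\|_{L^2}^2$, and convert that into the stated sum through Lemma~\ref{lemma1} (with the $\tau\le Ch^2$ step condition folding the leftover pieces into $Ch^2$). The only real difference is organizational: you merge the convective pairs into terms carrying $\P_{0h}\u_h^n$, which obliges you to establish a uniform $L^\infty$ bound for $\P_{0h}\u_h^n$ (doable, as you sketch, via inverse estimates and \eqref{ph_projection_error}), whereas the paper keeps $\u^n$ and $\P_{0h}\u_h^n-\u^n$ separate and needs only $\|\u^n\|_{L^\infty}\le C$ together with hypothesis \eqref{E}.
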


\begin{proof}
By the time step condition $\tau \leq C h^2$, it follows from (\ref{eq1}) and (\ref{E}) that
	\begin{align}
		\label{etaL2I}
		\|e^{n+1}_{\sigma,h}\|_{L^2}  \leq C (1+C_p) h^2.
	\end{align}
	Taking $r_h=D_{\tau}e^{n+1}_{\sigma,h}$ in (\ref{errorequation1}) yields
	\begin{align}\label{D}
		\| D_{\tau}e^{n+1}_{\sigma,h}\| ^2_{L^2}\leq \sum_{j=1}^{6} |(J_{ih}^{n+1},D_{\tau}e^{n+1}_{\sigma,h})|.
	\end{align}
	The right-hand side of (\ref{D}) can be bounded term by term as follows. For the first term, one has
	\begin{eqnarray*}
		|(J_{1h}^{n+1},D_{\tau}e^{n+1}_{\sigma,h})|
		&\leq & \| \nabla\sigma^{n+1}\| _{L^{\infty}}\| \u^n-\P_{0h}\u_h^n\| _{L^2}\| D_{\tau}e^{n+1}_{\sigma,h}\| _{L^2}
		\\
		&\leq & \frac{1}{10}\| D_{\tau}e^{n+1}_{\sigma,h}\| _{L^2}^2+ C \| \u^n-\P_{0h}\u_h^n\| ^2_{L^2}\\
		&\leq & \frac{1}{10}\| D_{\tau}e^{n+1}_{\sigma,h}\| _{L^2}^2+ C h^4,
	\end{eqnarray*}
	
Adapting the above same technique, by using (\ref{etaL2I}) and the time step condition $\tau \leq C h^2$, we have
	\begin{eqnarray*}
		|(J_{2h}^{n+1},D_{\tau}e^{n+1}_{\sigma,h}) |
		&\leq & C  \| \P_{0h}\u_h^n-\u^n\| _{L^{2}}\|\nabla e^{n+1}_{\sigma,h}\| _{L^2}\| D_{\tau}e^{n+1}_{\sigma,h}\| _{L^\infty} \\
		&\leq & C C_p h^{-\frac{1}{2}}\| e^{n+1}_{\sigma,h}\| _{L^2}\| D_{\tau}e^{n+1}_{\sigma,h}\| _{L^2} \\
		&\leq & \frac{1}{10}\| D_{\tau}e^{n+1}_{\sigma,h}\| _{L^2}^2+ C C_p^2 h^{-1}\| e^{n+1}_{\sigma,h}\| _{L^2}^2 \\
		&\leq & \frac{1}{10}\| D_{\tau}e^{n+1}_{\sigma,h}\| _{L^2}^2+ C h^2
	\end{eqnarray*}
for sufficiently small $h$ such that $ C^2 C^2_p(1+C_p)^2 h \leq 1$.
	\begin{eqnarray*}
	|(J_{3h}^{n+1},D_{\tau}e^{n+1}_{\sigma,h}) |
	&\leq & C h^{-1}\| e^{n+1}_{\sigma,h}\| _{L^2}\| D_{\tau}e^{n+1}_{\sigma,h}\| _{L^2} \\
	&\leq & \frac{1}{10}\| D_{\tau}e^{n+1}_{\sigma,h}\| _{L^2}^2+ Ch^{-2}\| e^{n+1}_{\sigma,h}\| _{L^2}^2
\end{eqnarray*}

By the inverse inequality, we estimate the second term by
\begin{eqnarray*}
	|(J_{4h}^{n+1},D_{\tau}e^{n+1}_{\sigma,h})  |
	&\leq & \| \nabla e^{n+1}_{\sigma}\| _{L^{2}}\| \u^n-\P_{0h}\u_h^n\| _{L^2}\| D_{\tau}e^{n+1}_{\sigma,h}\| _{L^\infty} \nn\\
	&\leq& Ch^{-\frac{3}{2}} \|D_{\tau} e^{n+1}_{\sigma,h} \| h^2 \| \sigma^{n+1}\|_{H^3} \| \u^n-\P_{0h}\u_h^n\| _{L^2}  \nn\\
	&\leq & \frac{1}{10}\| D_{\tau}e^{n+1}_{\sigma,h}\| _{L^2}^2+Ch\| \u^n-\P_{0h}\u_h^n\| ^2_{L^2}\\
	&\leq & \frac{1}{10}\| D_{\tau}e^{n+1}_{\sigma,h}\| _{L^2}^2+ C C_p h^5.
\end{eqnarray*}

	For the last two terms in (\ref{D}), we can estimate by
	\begin{eqnarray*}
		|(J_{5h}^{n+1},D_{\tau}e^{n+1}_{\sigma,h}) |
		&\leq & C\| \nabla e^{n+1}_{\sigma}\| _{L^2}\| D_{\tau}e^{n+1}_{\sigma,h}\| _{L^2} \\
		&\leq & \frac{1}{10}\| D_{\tau}e^{n+1}_{\sigma,h}\| _{L^2}^2+ Ch^4,\\
		|(J^{n+1}_{6h},D_{\tau}e^{n+1}_{\sigma,h}   )| &\leq &C \| R^{n+1}_{\sigma}\|_{L^2}\| D_{\tau}e^{n+1}_{\sigma,h}\| _{L^2} \\
		&\leq & C \|R^{n+1}_\sigma\|^2_{L^2} + \frac{1}{10} \| D_{\tau}e^{n+1}_{\sigma,h} \|^2_{L^2}.
			\end{eqnarray*}
			
	Substituting the above inequalities into (\ref{D}) and using (\ref{eq1}), we get
	\begin{eqnarray*}
		\| D_{\tau}e^{n+1}_{\sigma,h}\|^2_{L^2}
		& \leq & Ch^2+C \|R^{n+1}_\sigma\|^2_{L^2} + Ch^{-2}\| e^{n+1}_{\sigma,h}\| _{L^2}^2 \\
		& \leq & Ch^2+C \|R^{n+1}_\sigma\|^2_{L^2} + C\tau h^{-2}\sum_{i=0}^{n}\| \u^n-\P_{0h}\u_h^n\| ^2_{L^2}.
	\end{eqnarray*}
	Thus, we complete the proof of Lemma \ref{lemma2}.
\end{proof}

Next, we give the estimate of $\| \e_{\u,h}\|_{L^2}$ and $\|e_{\theta,h}\|_{L^2}$ by the mathematical induction.
\begin{lemma}\label{lemma3}
	Under the assumptions (A1), (A2) and the time step condition $\tau \leq C h^2$, there exists small constants $\tau_0 >0$ and $h_0 >0$ such that when $\tau \leq \tau_0$ and $h<h_0$, there holds
	\begin{align}\label{u_n_error}
		\|\e^{n+1}_{\textbf{u},h} \|_{L^2}^2 + \tau \sum_{n=0}^{N-1} \| \nabla \e^{n+1}_{\textbf{u},h} \|^2_{L^2} \leq C_0^2 (\tau^2+h^4)\\
\label{theta_n_error}
		\|e^{n+1}_{\theta,h} \|_{L^2}^2 + \tau \sum_{n=0}^{N-1} \| \nabla e^{n+1}_{\theta,h} \|^2_{L^2} \leq C_0^2 (\tau^2+h^4)
	\end{align}
	for all $1 \leq n \leq N-1$.
\end{lemma}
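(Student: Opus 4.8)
The plan is to prove (\ref{u_n_error}) and (\ref{theta_n_error}) simultaneously by induction on $n$, since the temperature error equation (\ref{errorequation3}) couples back to the velocity through the projection and finite element parts $\e^n_{\textbf{u}}$, $\e^n_{\textbf{u},h}$ appearing in its nonlinear terms. The base case $n=0$ follows from the interpolation estimates (\ref{u_inverse_inequality}) for $\u_h^0,\theta_h^0$ together with the projection bounds (\ref{u_projection_error}) and (\ref{eq22}). For the inductive step I assume both estimates hold at every level up to $n$. The crucial consequence, combined with the time-step condition $\tau\leq Ch^2$, is that $\|\e^k_{\textbf{u},h}\|_{L^2}\leq C_0(\tau+h^2)\leq Ch^2$ for $k\leq n$; feeding this through the inverse inequalities (\ref{inverse1})--(\ref{eq20}) gives $\|\e^k_{\textbf{u},h}\|_{L^\infty}\leq Ch^{-3/2}\|\e^k_{\textbf{u},h}\|_{L^2}\leq CC_0h^{1/2}=o(1)$, so that the discrete fields $\u_h^k=\R_h\u^k-\e^k_{\textbf{u},h}$, $\rho_h^k=(\sigma_h^k)^2$ and $\theta_h^k$ inherit uniform $L^\infty$ and $W^{1,\infty}$ bounds from the exact solution under (\textbf{A2}), with constants independent of $C_0$. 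In particular the hypothesis (\ref{E}), $\|\P_{0h}\u_h^n-\u^n\|_{L^2}\leq C_ph^2$, is verified, so Lemmas \ref{lemma1} and \ref{lemma2} supply the density-error controls $\|e^{n+1}_{\sigma,h}\|_{L^2}$ (via (\ref{eq1})) and $\|D_\tau e^{n+1}_{\sigma,h}\|_{L^2}$ (via (\ref{eq7})). The same $L^\infty$ closeness, $\|e^{m+1}_{\sigma,h}\|_{L^\infty}\leq Ch^{-3/2}\|e^{m+1}_{\sigma,h}\|_{L^2}\leq Ch^{1/2}$ by (\ref{etaL2I}), keeps $\sigma_h^{m+1}\geq c>0$ for $h$ small, which I need to pass from weighted to unweighted norms.

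Next I would test (\ref{errorequation2}) with $\v_h=2\tau\e^{n+1}_{\textbf{u},h}$ and (\ref{errorequation3}) with $w_h=2\tau e^{n+1}_{\theta,h}$. As in the proof of Lemma \ref{stabilitylemma}, the discrete time-derivative term yields the telescoping quantity $\|\sigma^{n+1}_h\e^{n+1}_{\textbf{u},h}\|^2_{L^2}-\|\sigma^n_h\e^n_{\textbf{u},h}\|^2_{L^2}+\|\sigma^{n+1}_h\e^{n+1}_{\textbf{u},h}-\sigma^n_h\e^n_{\textbf{u},h}\|^2_{L^2}$, the viscous term gives $2\mu\tau\|\nabla\e^{n+1}_{\textbf{u},h}\|^2_{L^2}$, and the pressure/divergence pair vanishes; the temperature identities are analogous with $2\kappa\tau\|\nabla e^{n+1}_{\theta,h}\|^2_{L^2}$. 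I would then split the twenty right-hand terms $X_i$ (resp.\ $Y_i$) into three groups: (i) the pure projection-error terms, those carrying $e^{n+1}_\sigma$, $e^{n+1}_\rho$, $\e^n_{\textbf{u}}$, $\e^{n+1}_{\textbf{u}}$, $D_\tau e^{n+1}_\sigma$ or $D_\tau\e^{n+1}_{\textbf{u}}$ without an $h$-subscript, bounded by $Ch^2\|\e^{n+1}_{\textbf{u},h}\|_{L^2}$ or better using (\ref{u_projection_error})--(\ref{D_tau_theta_error}); (ii) the consistency terms $(R^{n+1}_{\textbf{u}},\v_h)$ and $(R^{n+1}_\theta,w_h)$, controlled by (\ref{eq18}) to produce the $O(\tau^2)$ contribution; and (iii) the genuine finite-element-error terms carrying $e^{n+1}_{\sigma,h}$, $e^{n+1}_{\rho,h}$, $\e^n_{\textbf{u},h}$, $e^n_{\theta,h}$ or $\e^{n+1}_{\textbf{u},h}$, which after the uniform $L^\infty$ bounds and Cauchy--Schwarz give $C\tau(\|\e^{n+1}_{\textbf{u},h}\|^2_{L^2}+\|e^n_{\theta,h}\|^2_{L^2}+\|\e^n_{\textbf{u},h}\|^2_{L^2})+C\tau\|e^{n+1}_{\sigma,h}\|^2_{L^2}$, the last already estimated by (\ref{eq1}); here terms in $e_{\rho,h}$ reduce to $e_{\sigma,h}$ through $\rho_h=\sigma_h^2$ and the bounds above.

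The convection and half-divergence terms are handled by the skew-symmetry used in Lemma \ref{stabilitylemma}: the pair $-(\rho^{n+1}_h(\u^n_h\cdot\nabla)\e^{n+1}_{\textbf{u},h},\e^{n+1}_{\textbf{u},h})$ and $\tfrac12(\e^{n+1}_{\textbf{u},h}\nabla\cdot(\rho^{n+1}_h\u^n_h),\e^{n+1}_{\textbf{u},h})$ is integrated by parts so that only $\|\nabla\cdot(\rho^{n+1}_h\u^n_h)\|_{L^\infty}\|\e^{n+1}_{\textbf{u},h}\|^2_{L^2}$ survives, which is absorbable into the Gronwall sum (and likewise for $\theta$). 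Summing over $n$ from $0$ to $m$, using (\ref{eq1}), (\ref{eq7}), (\ref{eq18}) and discarding the nonnegative jump terms, I reach an inequality of the form $a_{m+1}+2\mu\tau\sum\|\nabla\e^{n+1}_{\textbf{u},h}\|^2_{L^2}+2\kappa\tau\sum\|\nabla e^{n+1}_{\theta,h}\|^2_{L^2}\leq C_1(\tau^2+h^4)+C_2\tau\sum_{n=0}^m a_n$ with $a_n:=\|\sigma^n_h\e^n_{\textbf{u},h}\|^2_{L^2}+\|\sigma^n_h e^n_{\theta,h}\|^2_{L^2}$ and $C_1,C_2$ independent of $C_0$. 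The discrete Gronwall inequality (Lemma \ref{GronwallLemma}, in the form of its Remark with $\sigma_k=1$) together with $\sigma^n_h\geq c>0$ then yields $\|\e^{m+1}_{\textbf{u},h}\|^2_{L^2}+\|e^{m+1}_{\theta,h}\|^2_{L^2}+\tau\sum(\|\nabla\e^{n+1}_{\textbf{u},h}\|^2_{L^2}+\|\nabla e^{n+1}_{\theta,h}\|^2_{L^2})\leq C_3(\tau^2+h^4)$ with $C_3$ independent of $C_0$. Finally I would fix $C_0^2\geq 2C_3$ and only then choose $\tau_0,h_0$ small (depending on $C_0$) so that all smallness requirements above hold, recovering (\ref{u_n_error})--(\ref{theta_n_error}) at level $m+1$ and closing the induction.

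The main obstacle is the treatment of the nonlinear convection and variable-coefficient terms, which only make sense once the discrete solution is known to be uniformly bounded in $L^\infty$ and $W^{1,\infty}$; securing these bounds forces the bootstrapping through the inverse inequalities under the sharp constraint $\tau\leq Ch^2$, and it is this loss of powers of $h$ that must be compensated by the extra accuracy of the projection and consistency errors. A secondary, but essential, point is to break the apparent circularity between $C_0$ and $h_0$ by first producing the estimate with a constant $C_3$ independent of $C_0$ and only afterwards fixing $C_0$ and then $h_0,\tau_0$, exactly as sketched in the last step.
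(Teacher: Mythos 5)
Your overall architecture (induction on $n$, inverse inequalities to get uniform $L^\infty$/$W^{1,3}$ bounds on the discrete fields, verification of (\ref{E}) so that Lemmas \ref{lemma1} and \ref{lemma2} apply, testing with $2\tau\e^{n+1}_{\u,h}$ and $2\tau e^{n+1}_{\theta,h}$, Gronwall, and fixing $C_0$ only at the end) coincides with the paper's. However, there is a genuine gap in your three-way classification of the right-hand terms: the terms $X_6$ and $Y_6$, which carry the discrete time difference $D_\tau e^{n+1}_{\sigma,h}$ of the \emph{finite element part} of the density error, fit neither your group (i) nor your group (iii), and the bound you claim for group (iii) is false for them. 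The only available control on this quantity is (\ref{eq7}), which gives $\|D_\tau e^{n+1}_{\sigma,h}\|_{L^2}=O(h)$, not $O(h^2)$. A direct Cauchy--Schwarz therefore yields
\begin{align*}
2\tau\bigl|\bigl(\sigma^{n+1}_h D_\tau e^{n+1}_{\sigma,h}\,\u^n,\ \e^{n+1}_{\u,h}\bigr)\bigr|
\leq C\tau h\,\|\e^{n+1}_{\u,h}\|_{L^2}
\leq C\tau h^2+C\tau\|\e^{n+1}_{\u,h}\|^2_{L^2},
\end{align*}
and after summation over $n$ the first contribution is $O(h^2)$, two powers of $h$ short of the target $O(\tau^2+h^4)$. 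Your proof as written would close the induction only with an $O(\tau+h)$ bound on $\|\e^{n+1}_{\u,h}\|_{L^2}^2$, which is not what the lemma asserts and which would also break the bootstrap $\|\e^n_{\u,h}\|_{L^\infty}\leq CC_0h^{1/2}$ that you rely on.

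The paper's resolution of precisely this term is the one nontrivial idea of the proof and is absent from your proposal: after splitting $\u^n=e^n_{\u}+e^n_{\u,h}+\u^n_h$ in $X_6$, the surviving piece $2\tau(\sigma^{n+1}_hD_\tau e^{n+1}_{\sigma,h}\,\u^n_h,\e^{n+1}_{\u,h})$ is rewritten using the piecewise-constant $L^2$ projection $R_h$ onto $W_h^0$, so that the mismatch $(\u^n_h\cdot\e^{n+1}_{\u,h})-R_h(\u^n_h\cdot\e^{n+1}_{\u,h})$ gains a factor $h$ by (\ref{Rh1}) and pairs with $\|D_\tau e^{n+1}_{\sigma,h}\|_{L^2}=O(h)$ to give $C\tau h^2\|D_\tau e^{n+1}_{\sigma,h}\|^2_{L^2}=O(\tau h^4)$, while the projected part is eliminated by going back to the density error equation (\ref{errorequation1}) tested with $r_h=2\tau\sigma^{n+1}_hR_h(\u^n_h\cdot\e^{n+1}_{\u,h})$, which replaces the inner product against $D_\tau e^{n+1}_{\sigma,h}$ by the six terms $Z_1,\dots,Z_6$, each of which is of the admissible order. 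Without this substitution (or some equivalent device, e.g. a discrete summation by parts in time exploiting the density equation), the estimate cannot be closed at the claimed rate; the same remark applies verbatim to $Y_6$ in the temperature equation.
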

We need to  prove the validity of  Lemma \ref{lemma3}  by the mathematical induction and the time step condition $\tau \leq C h^2$, we can assume that
\begin{align}
		\|e^n_{\textbf{u},h} \|_{L^2}^2 + \tau \sum_{n=0}^{N-1} \| \nabla e^n_{\textbf{u},h} \|^2_{L^2} \leq C_0^2 h^4,\label{nsre-1}\\
			\|e^n_{\theta,h} \|_{L^2}^2 + \tau \sum_{n=0}^{N-1} \| \nabla e^n_{\theta,h} \|^2_{L^2} \leq C_0^2 h^4,\label{nsre-5}
\end{align}
By the inverse inequality, we have 
\begin{align}
	\|e^n_{\textbf{u},h} \|_{L^{\infty}} \leq C h^{-\frac{3}{2}} \| e^n_{\textbf{u},h} \|_{L^2} \leq C C_0 h^{\frac{1}{2}} \leq C, \label{eq4}\\
	\|e^n_{\theta,h} \|_{L^{\infty}} \leq C h^{-\frac{3}{2}} \| e^n_{\theta,h} \|_{L^2} \leq C C_0 h^{\frac{1}{2}} \leq C,\label{eq14}
\end{align}
thus 
\begin{align}
	\| \textbf{u}^n_{h}\|_{L^{\infty}} \leq  \|\textbf{u}^n \|_{L^{\infty}}  +\| e^n_{\textbf{u}} \|_{L^{\infty}}+ \| e^n_{\textbf{u},h} \|_{L^{\infty}} \leq C,\label{eq5}\\
	\| \nabla \u^n_h \|_{L^3}\leq C \|\nabla \e^n_{\u,h}  \|_{L^3} +C \| \nabla \u^n\|_{L^3} \leq C +C h^{-\frac{3}{2}} \| \e_{\u,h}^n\|_{L^2}\leq C,\label{eq9}\\
	\| \nabla \u^n_h \|_{L^6} \leq C \|\nabla \e^n_{\u,h}  \|_{L^6} +C \| \nabla \u^n\|_{L^6}\leq C + C h^{-2}\| \e_{\u,h}^n\|_{L^2}\leq C,\label{nsre-3}\\
	\| \theta^n_{h}\|_{L^{\infty}} \leq  \| \theta^n \|_{L^{\infty}}  +\| e^n_{\theta} \|_{L^{\infty}}+ \| e^n_{\theta,h} \|_{L^{\infty}} \leq C,\label{eq15}\\
	\| \nabla \theta^n_h \|_{L^3}\leq C \|\nabla e^n_{\theta,h}  \|_{L^3} +C \| \nabla \theta^n\|_{L^3} \leq  C +C h^{-\frac{3}{2}} \| e_{\sigma,h}^n\|_{L^2}\leq C,\label{nsre-2}\\
		\| \nabla \theta^n_h \|_{L^6} \leq C \|\nabla e^n_{\theta,h}  \|_{L^6} +C \| \nabla \theta^n\|_{L^6}\leq C + C h^{-2}\| e_{\theta,h}^n\|_{L^2}\leq C.\label{nsre-6}
\end{align} 

From (\ref{ph_projection_error}) and  (\ref{nsre-1}) , we have 
\begin{align}\label{eq6}
	\| \textbf{u}^n- \textbf{P}_{0h} \textbf{u}^n_h \|^2_{L^2} & \leq  \| \textbf{u}^n - \textbf{P}_{0h} \textbf{u}^n \|_{L^2}^2 + \| \textbf{P}_{0h} \textbf{u}^n - \textbf{P}_{0h} \textbf{u}^n_h \|^2_{L^2} \nonumber\\
	&\leq  Ch^4 + C \| \textbf{u}^n- \textbf{u}^n_h\|^2_{L^2} \nonumber\\
	&\leq C (1+C_0)^2 h^4, 
\end{align}

According to (\ref{eq6}) and (\ref{eq1}) and the time step condition $\tau \leq C h^2$, we have
\begin{align}
	\| e^{n+1}_{\sigma,h} \|^2 \leq & C(\tau^2+h^4)+C\tau\sum_{n=0}^m\| \u^n-\P_{0h}\u_h^n\|^2_{L^2}  \nonumber \\
	\leq & C (1+C_0)^2h^4,\label{eq10}\\
	\| \sigma^{n+1} - \sigma^{n+1}_h \|_{L^{\infty}}  \leq& \| e^{n+1}_{\sigma}\|_{L^{\infty}} +   \| e^{n+1}_{\sigma,h}\|_{L^{\infty}} \nonumber\\
	\leq & C h^{- \frac{3}{2}} (\| e^{n+1}_{\sigma}\|_{L^{2}} +   \| e^{n+1}_{\sigma,h}\|_{L^{2}})\nn\\
	\leq & C (1+C_0)h^{\frac{1}{2}},
\end{align}

Furthermore
\begin{align} \label{eq8}
	 \sqrt{\rho^{min}} - C (1+C_0)h^{\frac{1}{2}}  \leq \sigma^{n+1}_{h} \leq  \sqrt{\rho^{max}} +C (1+C_0)h^{\frac{1}{2}} \leq C,
\end{align}

By (\ref{eq7}) and (\ref{eq6}) and taking sufficiently small $h$, we have 
\begin{align}\label{eq2}
	\| D_{\tau} e^{n+1}_{\sigma} \|_{L^3} \leq C h^{-\frac{1}{2}} 	\| D_{\tau} e^{n+1}_{\sigma} \|_{L^2} \leq C C_0 h^{\frac{1}{2}}\leq C,
\end{align}

By (\ref{eq10}) and inverse inequality (\ref{inverse2}), we can get
\begin{align}\label{nsre-4}
	\| \nabla \sigma^{n+1}_h\|_{L^3} \leq & \| \nabla e^{n+1}_{\sigma,h}\|_{L^3}+\| \Pi_h \sigma^{n+1}\|_{L^3}\notag\\
	\leq & C h^{-\frac{3}{2}} \| e^{n+1}_{\sigma,h} \|_{L^2} +\|\sigma^{n+1} \|_{L^3}\\
	\leq & C (1+C_0)h^{\frac{1}{2}} +C \leq C. \notag
\end{align}

\subsection{Estimate of $\| \e^{n+1}_{\u,h}\|^2_{L^2}$}

Letting $(\textbf{v}_h, q_h) = 2 \tau (e^{n+1}_{\textbf{u},h} , e^{n+1}_p ) $ in (\ref{errorequation2}),  applying H\"{o}lder inequality and Young inequality, we can estimate $\sum\limits^{20}\limits_{i=1}(X_i ,(e^{n+1}_{\textbf{u},h})$ as follows
\begin{itemize}
	\item Estimate of $2\tau (X_1,e^{n+1}_{\textbf{u},h} )$
\end{itemize}

By recombining this term and  using (\ref{eq2}) (\ref{App8}) (\ref{eq8}), Lagrange’s mean value theorem, we have
\begin{align*}
	\begin{split}
		2\tau (X_1 , e^{n+1}_{\textbf{u},h}) =&2 \tau (\sigma ^{n+1} _h D_{\tau} (\sigma ^{n+1} _h e^{n+1}_{\textbf{u}}), e^{n+1}_{\textbf{u},h})\\
		\leq&
		2 \tau | (\rho ^{n+1}_h D_{\tau} e^{n+1}_{\textbf{u}} , e^{n+1} _{\textbf{u},h})|+ 2 \tau |(\sigma ^{n+1} _h e^n_{\textbf{u}} D_{\tau} e^{n+1}_{\sigma,h} ,e^{n+1} _{\textbf{u},h} )|\\
		&+2 \tau| (\sigma^{n+1}_{h} e^n_{\textbf{u}} D_{\tau} (\Pi_h \sigma^{n+1}) , e^{n+1}_{\textbf{u},h})|\\
		\leq & C \tau \| \rho^{n+1}_h\|_{L^{\infty}} \| D_{\tau} e^{n+1}_{\u} \|_{L^2} \| e^{n+1}_{\u,h}\|_{L^2} + C\tau \|\sigma^{n+1}_h\|_{L^{\infty}} \| e^n_\u\|_{L^2} \|D_{\tau} e^{n+1}_{\sigma,h}\|_{L^3}\|e^{n+1}_{\u,h}\|_{L^6} \\
		&+ C\tau\| \sigma^{n+1}_h\|_{L^{\infty}} \| e^n_\u\|_{L^2} \| D_{\tau} (\Pi_h \sigma^{n+1}) \|_{L^3} \|e^{n+1}_{\u,h}\|_{L^6} \\
		\leq&
		C\tau h^4 \| D_{\tau} \textbf{u}^{n+1} \| ^2_{H^2} +C\tau h^4 +\frac{\mu\tau}{30}  \| \nabla e^{n+1}_{\textbf{u},h} \| ^2_{L^2},
	\end{split}
\end{align*}
\begin{itemize}
	\item Estimate of $2\tau (X_2,e^{n+1}_{\textbf{u},h} )$
\end{itemize}

By recombining this term and using (\ref{App2}), Lagrange’s mean value theorem,  one has 
\begin{align*}
	\begin{split}
		2\tau(X_2,e^{n+1}_{\u,h}) =&2\tau (e^{n+1}_\sigma D_{\tau}(\sigma^{n+1}\textbf{u}^{n+1}) , e^{n+1}_{\textbf{u}})\\
		\leq&  2\tau |( e^{n+1}_{\sigma} \sigma^{n+1} D_{\tau}\textbf{u}^{n+1}   , e^{n+1}_{\textbf{u},h})| + 2\tau | ( e^{n+1}_\sigma (D_{\tau}\sigma^{n+1} ) \textbf{u}^{n+1} ,e^{n+1}_{\textbf{u},h} ) |\\
		\leq & C \tau h^4+\frac{\mu \tau}{30} \| \nabla e^{n+1}_{\textbf{u},h}\|^2_{L^2}, 
	\end{split}
\end{align*}

Similar to  $2\tau (X_2,e^{n+1}_{\textbf{u},h} )$, we get 
\begin{itemize}
	\item Estimate of $2\tau (X_3,e^{n+1}_{\textbf{u},h} )$
\end{itemize}

\begin{align*}
	\begin{split}
		2\tau(X_3,e^{n+1}_{\textbf{u},h}) =& 2 \tau(\sigma^{n+1}_h e^{n+1}_{\sigma}D_{\tau}\textbf{u}^{n+1},e^{n+1}_{\textbf{u},h} )\\
		\leq & C\tau h^4 + +\frac{\mu \tau}{30} \| \nabla e^{n+1}_{\textbf{u},h}\|^2_{L^2}, 
	\end{split}
\end{align*}

\begin{itemize}
	\item Estimate of $2\tau (X_4,e^{n+1}_{\textbf{u},h} )$
\end{itemize}
\begin{align*}
	\begin{split}
		2\tau(X_4,e^{n+1}_{\textbf{u},h}) =& 2 \tau(\sigma^{n+1}_h e^{n+1}_{\sigma,h}D_{\tau}\textbf{u}^{n+1},e^{n+1}_{ \textbf{u},h} )\\
		\leq & C\tau h^4 + C\tau \|e^{n+1}_{\sigma,h}\|^2_{L^2},
	\end{split}
\end{align*}
\begin{itemize}
	\item Estimate of $2\tau (X_5,e^{n+1}_{\u,h} )$
\end{itemize}
\begin{align*}
	\begin{split}
		2\tau(X_5,e^{n+1}_{\textbf{u},h}) =&2\tau (\sigma^{n+1}_h D_{\tau}e^{n+1}_{\sigma} \textbf{u} ^n ,e^{n+1}_{\u,h})\\
		\leq & C \tau h^4  \|D_{\tau}\sigma^{n+1} \|_{H^2}^2+ C \tau \| \sigma^{n+1}_h e^{n+1}_{\textbf{u},h} \|^2_{L^2},
	\end{split}
\end{align*}
where we use (\ref{App9}), (\ref{eq8}).
\begin{itemize}
	\item Estimate of $2\tau (X_6,e^{n+1}_{\u,h} )$
\end{itemize}
\begin{align}\label{eq3}
	\begin{split}
		2\tau(X_6,e^{n+1}_{\textbf{u},h}) =& 2 \tau(\sigma^{n+1}_h D_{\tau}e^{n+1}_{\sigma,h} \textbf{u} ^n ,e^{n+1}_{\textbf{u},h})\\
		\leq & 2\tau|( \sigma^{n+1}_h D_{\tau}e^{n+1}_{\sigma,h} e^n_{\textbf{u}}, e^{n+1}_{\textbf{u},h}  )| +2\tau|( \sigma^{n+1}_h D_{\tau}e^{n+1}_{\sigma,h} e^n_{\textbf{u},h}, e^{n+1}_{\textbf{u},h}  )| \\
		& +2\tau|( \sigma^{n+1}_h D_{\tau}e^{n+1}_{\sigma,h} \textbf{u}^n_h, e^{n+1}_{\textbf{u},h}  )| \\
		\leq & C\tau h^4 + \frac{\mu \tau}{30} \|\nabla e^{n+1}_{\textbf{u},h}\|^2_{L^2} + C\tau \|\sigma^n_h e^n_{\u,h}\|^2_{L^2} +2\tau|( \sigma^{n+1}_h D_{\tau}e^{n+1}_{\sigma,h} \textbf{u}^n_h, e^{n+1}_{\textbf{u},h}  )|,
	\end{split}
\end{align}
where we use (\ref{eq2}),(\ref{eq8}), (\ref{nsre-1}), (\ref{eq5}).

In order to estimate the last term $2\tau|( \sigma^{n+1}_h D_{\tau}e^{n+1}_{\sigma,h} \textbf{u}^n_h, e^{n+1}_{\textbf{u},h}  )|$ in (\ref{eq3}), we introduce the piece wise constant finite element space
\begin{align*}
	W_h^0=\{q_h \in L^2(\Omega) |  q_h \in P_0(K) , \quad \forall K \in \mathcal{T}_h\}.
\end{align*}
Let $R_{h}$ be the $L^2$ projection operator from $L^2(\Omega)$ onto $W_h^0$. Then there holds
\begin{align}	\label{Rh}
	\|q-R_hq \|_{L^2} \leq Ch\|q\|_{H^1}\quad
	\mbox{and}\quad
	\|R_h q\|_{L^2}\leq \|q\|_{L^2} .
\end{align}
By (\ref{eq5}),(\ref{eq9}) , one has
\begin{align*}
	\|\nabla(\u_h^n\cdot \e_{\u,h}^{n+1})\|_{L^2}
	\leq \|\nabla\u_h^n\|_{L^3}\|\e^{n+1}_{\u,h}\|_{L^6}+\|\u_h^n\|_{L^\infty}\|\nabla\e^{n+1}_{\u,h}\|_{L^2}
	\leq  C \|\nabla\e_{\u,h}^{n+1}\|_{L^2}.
\end{align*}

It follows from (\ref{Rh}) that 
\begin{align}
	\label{Rh1}
	\|(\u_h^n\cdot \e_{\u,h}^{n+1})-R_h(\u_h^n\cdot \e_{\u,h}^{n+1})\|_{L^2}
	\leq Ch \|\nabla\e_{\u,h}^{n+1}\|_{L^2}.
\end{align}

Thus
\begin{eqnarray}\label{eq13}
	2\tau |(\sigma_h^{n+1}D_{\tau} e^{n+1}_{\sigma,h} \u_h^n,\e_{\u,h}^{n+1}) | & \leq &
	2\tau |(\sigma_h^{n+1}D_{\tau}e^{n+1}_{\sigma,h} , R_h(\u_h^n\cdot \e_{\u,h}^{n+1}) ) | \nn \\
	& &+  2\tau |(\sigma_h^{n+1}D_{\tau}e^{n+1}_{\sigma,h} , (\u_h^n\cdot \e_{\u,h}^{n+1})-R_h(\u_h^n\cdot \e_{\u,h}^{n+1}) ) |  \nn \\
	& \leq &       2\tau |(\sigma_h^{n+1}D_{\tau}e^{n+1}_{\sigma,h} , R_h(\u_h^n\cdot \e_{\u,h}^{n+1}) ) | \nn \\
	& & +\frac{\mu\tau}{30}\| \nabla\e_{\u,h}^{n+1}\| ^2_{L^2} + C \tau h^2 \|D_\tau e^{n+1}_{\sigma,h} \|_{L^2}^2 .
\end{eqnarray}

Taking $r_h= 2\tau \sigma^{n+1}_h R_h (\u^n_h \cdot e^{n+1}_{\u,h})$ in (\ref{errorequation1}), we have 

\begin{align}\label{eq11}
	2\tau | (D_{\tau}e^{n+1}_{\sigma,h} ,   \sigma^{n+1}_h R_h (\u^n_h \cdot e^{n+1}_{\u,h} ) | = &2\tau |(\nabla\sigma^{n+1}\cdot (\u^n- \P_{0h}\u_h^n),\sigma^{n+1}_h R_h (\u^n_h \cdot e^{n+1}_{\u,h} ) | \nonumber  \\
	+ &2\tau|( \nabla e^{n+1}_{\sigma,h}\cdot (\P_{0h}\u_h^n-\u^n),\sigma^{n+1}_h R_h (\u^n_h \cdot e^{n+1}_{\u,h} ) |\nonumber\\
	+&2\tau| (\nabla e_{\sigma,h}^{n+1}\cdot \u^n,\sigma^{n+1}_h R_h (\u^n_h \cdot e^{n+1}_{\u,h}) | \nonumber\\
	+& 2\tau|(\nabla e^{n+1}_\sigma \cdot (\textbf{P}_{0h}-\u^n),\sigma^{n+1}_h R_h (\u^n_h \cdot e^{n+1}_{\u,h})| \nonumber\\
+&2\tau| (\nabla e^{n+1}_\sigma \cdot \u^n,\sigma^{n+1}_h R_h (\u^n_h \cdot e^{n+1}_{\u,h})| \nonumber\\
	+&2\tau|(R^{n+1}_{\sigma} ,\sigma^{n+1}_h R_h (\u^n_h \cdot e^{n+1}_{\u,h})| \nonumber\\
	=& 2\tau \sum\limits^{6}\limits_{i=1}|(Z_i ,\sigma^{n+1}_h R_h (\u^n_h \cdot e^{n+1}_{\u,h})|,
\end{align}
\begin{itemize}
	\item Estimate of $2\tau |(Z_1,\sigma^{n+1}_h R_h (\u^n_h \cdot e^{n+1}_{\u,h} )|$
\end{itemize}
\begin{align*}
	\begin{split}
	2\tau |(Z_1,\sigma^{n+1}_h R_h (\u^n_h \cdot e^{n+1}_{\u,h} )| =&2\tau |(\nabla\sigma^{n+1}\cdot (\u^n- \P_{0h}\u_h^n),\sigma^{n+1}_h R_h (\u^n_h \cdot e^{n+1}_{\u,h} ) |\\
		\leq&
	C\tau \| \nabla \sigma^{n+1} \|_{L^{\infty}}  \| \sigma^{n+1}_h\|_{L^{\infty}} \|  \u^n- \P_{0h}\u_h^n\|_{L^2}  \|R_h ( \u^n_h \cdot e^{n+1}_{\u,h})\|_{L^2}\\
		\leq &
		C \tau \|  \u^n- \P_{0h}\u_h^n\|_{L^2}  \|R_h (\u^n_h \cdot e^{n+1}_{\u,h})\|_{L^2}\\
			\leq &
			C \tau  \|  \u^n- \P_{0h}\u_h^n\|_{L^2}^2+C \tau \| \sigma^{n+1}_he^{n+1}_{\u,h}\|^2_{L^2},
	\end{split}
\end{align*}
where we use (\ref{eq5}), (\ref{eq8}).
\begin{itemize}
	\item Estimate of $2\tau |(Z_2,\sigma^{n+1}_h R_h (\u^n_h \cdot e^{n+1}_{\u,h} )|$
\end{itemize}

By (\ref{inverse1})  and (\ref{eq20}), we notice that 
\begin{align}\label{eq19}
	\| \textbf{P}_{0h} \u^n_h - \u^n \|^2_{L^3} \leq& \|  \textbf{P}_{0h} \u^n_h -\R_h\u^n \|^2_{L^3}+ \| \R_h\u^n - \u^n\|^2_{L^3}\notag \\
	\leq & \| \textbf{P}_{0h} \u^n_h-\u^n\|^2_{L^3} + \| \u^n -\R_h\u^n \|^2_{L^3}+\| \R_h\u^n - \u^n\|^2_{L^3}\notag\\
	\leq &   C h^{-1}  (\| \textbf{P}_{0h} \u^n_h-\u^n\|^2_{L^2} + \| \u^n -\R_h\u^n \|^2_{L^2}) + Ch^3 \notag\\
	\leq &C(1+C_0)^2h^3,
\end{align}
thus for sufficiently small $h$ such that $(1+C_0) h^{\frac{1}{2}} \leq 1$, we have 
\begin{align*}
	\begin{split}
		&2\tau |(Z_2,\sigma^{n+1}_h R_h (\u^n_h \cdot e^{n+1}_{\u,h} )|\\ =&2\tau|( \nabla e^{n+1}_{\sigma,h}\cdot (\P_{0h}\u_h^n-\u^n),\sigma^{n+1}_h R_h (\u^n_h \cdot e^{n+1}_{\u,h} ) |\\
		\leq&
		C\tau \| \nabla  e^{n+1}_{\sigma,h} \|_{L^{\infty}}  \| \sigma^{n+1}_h\|_{L^{\infty}} \|  \u^n- \P_{0h}\u_h^n\|_{L^2}  \|R_h ( \u^n_h \cdot e^{n+1}_{\u,h})  -\u^n_h \cdot e^{n+1}_{\u,h}\|_{L^2}\\
		&+	C\tau \| \nabla  e^{n+1}_{\sigma,h} \|_{L^{2}}  \| \sigma^{n+1}_h\|_{L^{\infty}} \|  \u^n- \P_{0h}\u_h^n\|_{L^3}  \|\u^n_h\|_{L^{\infty}}\|  e^{n+1}_{\u,h}\|_{L^6}\\
		\leq & C\tau h^{-\frac{5}{2}} \|e^{n+1}_{\sigma_h} \|_{L^2}C_p h^3 \| \nabla e^{n+1}_{\u,h}\|_{L^2}+C h^{-1} \| e^{n+1}_{\sigma_h}\|_{L^2} C (1+C_0)h^{\frac{3}{2}} \| \nabla e^{n+1}_{\u,h}\|_{L^2}  \\
		\leq &
		C \tau (1+C_0) h^{\frac{1}{2}} \|e^{n+1}_{\sigma,h} \|_{L^2}  \| \nabla e^{n+1}_{\u,h}\|_{L^2}  \\
		\leq &
		C \tau  \|e^{n+1}_{\sigma,h} \|_{L^2}^2+ \frac{\mu \tau}{30} \|\nabla e^{n+1}_{\u,h} \|_{L^2}^2,
	\end{split}
\end{align*}

\begin{itemize}
	\item Estimate of $2\tau |(Z_3,\sigma^{n+1}_h R_h (\u^n_h \cdot e^{n+1}_{\u,h} )|$
\end{itemize}
\begin{align}\label{eq16}
	\begin{split}
		2\tau |(Z_3,\sigma^{n+1}_h R_h (\u^n_h \cdot e^{n+1}_{\u,h} )| =&2\tau| (\nabla e_{\sigma,h}^{n+1}\cdot \u^n,\sigma^{n+1}_h R_h (\u^n_h \cdot e^{n+1}_{\u,h}) | \\
		\leq&
		2 \tau |  (\nabla e_{\sigma,h}^{n+1}\cdot \u^n,\sigma^{n+1}_h  (R_h (\u^n_h \cdot e^{n+1}_{\u,h} ) - \u^n_h \cdot e^{n+1}_{\u,h})  |\\
		&+2 \tau |  (\nabla e_{\sigma,h}^{n+1}\cdot \u^n,\sigma^{n+1}_h  ( \u^n_h \cdot e^{n+1}_{\u,h})  |\\
			\leq & C  \tau\| \u^n\|_{L^{\infty}} \| \sigma^{n+1}_h \|_{L^{\infty}} \| \nabla e^{n+1}_{\sigma,h} \|_{L^2} \| R_h (\u^n_h \cdot e^{n+1}_{\u,h} ) - \u^n_h \cdot e^{n+1}_{\u,h}\|_{L^2}\\
			&+ C  \tau\| \nabla \u^n\|_{L^3}  \|\u^n_h\|_{L^{\infty}} \| e^{n+1}_{\sigma,h}\|_{L^2} \| \sigma^{n+1}_h\|_{L^{\infty}} \|e^{n+1}_{\textbf{u},h} \|_{L^2} \\
			&+ C  \tau\|e^{n+1}_{\sigma,h} \|_{L^2} \| \u^n\|_{L^{\infty}}   \| \u^n_h\|_{L^{\infty}} \| \nabla \sigma^{n+1}_h \|_{L^3} \| e^{n+1}_{\u,h} \|_{L^6}\\
			&+ C  \tau\|e^{n+1}_{\sigma,h} \|_{L^2} \| \u^n\|_{L^{\infty}} \|  \sigma^{n+1}_h \|_{L^\infty}\| \nabla (\u^n_h \cdot e^{n+1}_{\u,h}) \|_{L^6}\\
			\leq &C \tau \|e^{n+1}_{\sigma,h} \|_{L^2}\| \nabla e^{n+1}_{\u,h} \|_{L^2}\\
			\leq &  \frac{\mu \tau}{30} \|\nabla e^{n+1}_{\u,h} \|_{L^2}^2 + C\tau \| e^{n+1}_{\sigma,h} \|^2_{L^2},
		\end{split}
\end{align}
where we use the integration by parts and (\ref{nsre-3}), (\ref{nsre-4}).

\begin{itemize}
	\item Estimate of $2\tau |(Z_4,\sigma^{n+1}_h R_h (\u^n_h \cdot e^{n+1}_{\u,h} )|$
\end{itemize}
For sufficiently small $h$ such that $Ch^{\frac{1}{2}} \leq 1$, we can get 
\begin{align*}
	\begin{split}
		2\tau |(Z_4,\sigma^{n+1}_h R_h (\u^n_h \cdot e^{n+1}_{\u,h} )| =&2\tau|(\nabla e^{n+1}_\sigma \cdot (\textbf{P}_{0h}\u^n_h-\u^n),\sigma^{n+1}_h R_h (\u^n_h \cdot e^{n+1}_{\u,h})|\\
		\leq& C \tau \| \nabla e^{n+1}_{\sigma}\|_{L^\infty}\|\textbf{P}_{0h}\u^n_h-\u^n\|_{L^2}  \| R_h (\u^n_h \cdot e^{n+1}_{\u,h})\|_{L^2} \|\sigma^{n+1}_h\|_{\infty} \\
	\leq & C  \tau h^{\frac{1}{2}} \|  \u^n- \P_{0h}\u_h^n\|_{L^2} \| R_h (\u^n_h \cdot e^{n+1}_{\u,h})\|_{L^2}\\
		\leq &	C \tau  \|  \u^n- \P_{0h}\u_h^n\|_{L^2}^2+C\tau \| \sigma^{n+1} _h e^{n+1}_{\u,h}\|^2_{L^2},
	\end{split}
\end{align*}
where we use (\ref{eq5}), (\ref{eq8}), (\ref{inverse2}) and (\ref{App2}).

\begin{itemize}
	\item Estimate of $2\tau |(Z_5,\sigma^{n+1}_h R_h (\u^n_h \cdot e^{n+1}_{\u,h} )|$
\end{itemize}
\begin{align*}
	\begin{split}
		2\tau |(Z_5,\sigma^{n+1}_h R_h (\u^n_h \cdot e^{n+1}_{\u,h} )| =&2\tau| (\nabla e^{n+1}_\sigma \cdot \u^n,\sigma^{n+1}_h R_h (\u^n_h \cdot e^{n+1}_{\u,h})|\\
		\leq& C\tau   \| \nabla e^{n+1}_{\sigma} \|_{L^2} \|  \u^n \|_{L^{\infty}} \| \sigma^{n+1}_h\|_{L^{\infty}} \|  R_h (\u^n_h \cdot e^{n+1}_{\u,h})\|_{L^2}\\
		\leq & C \tau h^4 + C \tau \| \sigma^{n+1} _h e^{n+1}_{\u,h}\|^2_{L^2},
	\end{split}
\end{align*}
where we use (\ref{App2}), (\ref{eq5}), (\ref{eq8}).
\begin{itemize}
	\item Estimate of $2\tau |(Z_6,\sigma^{n+1}_h R_h (\u^n_h \cdot e^{n+1}_{\u,h} )|$
\end{itemize}
\begin{align*}
	\begin{split}
		2\tau |(Z_6,\sigma^{n+1}_h R_h (\u^n_h \cdot e^{n+1}_{\u,h} )| =&2\tau|(R^{n+1}_{\sigma} ,\sigma^{n+1}_h R_h (\u^n_h \cdot e^{n+1}_{\u,h})| \\
		\leq & C \tau \| R^{n+1}_{\sigma}\|_{L^2} \|\sigma^{n+1}_h\|_{L^{\infty}}  \|  R_h (\u^n_h \cdot e^{n+1}_{\u,h})\|_{L^2}\\
		\leq  &C \tau  \| R^{n+1}_{\sigma}\|_{L^2}^2 + C \tau  \| \sigma^{n+1} _h e^{n+1}_{\u,h}\|^2_{L^2},
	\end{split}
\end{align*}
Substituting these estimates into (\ref{eq11}), we have
\begin{align}
		2\tau | (D_{\tau}e^{n+1}_{\sigma,h} ,   \sigma^{n+1}_h R_h (\theta^n_h  e^{n+1}_{\theta,h} ) | 
		 \leq & C \tau h^4+C \tau \| \sigma^{n+1}_he^{n+1}_{\u,h}\|^2_{L^2}+  \frac{2\mu \tau}{30} \|\nabla e^{n+1}_{\u,h} \|_{L^2}^2 + C\tau \| e^{n+1}_{\sigma,h} \|^2_{L^2} \nonumber\\
		&+ C \tau  \| R^{n+1}_{\sigma}\|_{L^2}^2+C \tau h^2 \|D_\tau e^{n+1}_{\sigma,h} \|_{L^2}^2+C \tau  \|  \u^n- \P_{0h}\u_h^n\|_{L^2}^2.
\end{align}
thus
\begin{align}
	\begin{split}
		2\tau(X_6,e^{n+1}_{\textbf{u},h}) =& 2 \tau(\sigma^{n+1}_h D_{\tau}e^{n+1}_{\sigma,h} \theta ^n ,e^{n+1}_{\textbf{u},h})\\
		\leq &   C\tau \|\sigma^n_h e^n_{\u,h}\|^2_{L^2} +C \tau h^4+C \tau \| \sigma^{n+1}_he^{n+1}_{\u,h}\|^2_{L^2}+  \frac{3\mu \tau}{30} \|\nabla e^{n+1}_{\u,h} \|_{L^2}^2 +  C\tau \| e^{n+1}_{\sigma,h} \|^2_{L^2}\\
			&+ C \tau  \| R^{n+1}_{\sigma}\|_{L^2}^2+C \tau h^2 \|D_\tau e^{n+1}_{\sigma,h} \|_{L^2}^2+C \tau  \|  \u^n- \P_{0h}\u_h^n\|_{L^2}^2.
	\end{split}
\end{align}
\begin{itemize}
	\item Estimate of $2\tau (X_7,e^{n+1}_{\u,h} )$
\end{itemize}
\begin{align*}
	\begin{split}
		2\tau(X_7,e^{n+1}_{\textbf{u},h}) =&2\tau (e^{n+1}_{\sigma,h} D_{\tau} (\sigma^{n+1} \textbf{u} ^{n+1}) ,e^{n+1}_{\u,h})\\
		\leq & 2 \tau | (e^{n+1}_{\sigma,h} \sigma^{n+1}D_{\tau} \u^{n+1} ,e^{n+1}_{\textbf{u},h} )| + 2 \tau | (e^{n+1}_{\sigma,h} \u^{n+1} D_{\tau} \sigma^{n+1} ,e^{n+1}_{\textbf{u},h} )|\\
		\leq &C \tau \| (e^{n+1}_{\sigma,h}\|_{L^2} \|  \sigma^{n+1}\|_{L^{\infty}}  \|D_{\tau} \u^{n+1}\|_{L^3} \| e^{n+1}_{\textbf{u},h}\|_{L^6}\\
		&+ C \tau \| (e^{n+1}_{\sigma,h}\|_{L^2} \|  \u^{n+1}\|_{L^{\infty}}  \|D_{\tau} \sigma^{n+1}\|_{L^3} \| e^{n+1}_{\textbf{u},h}\|_{L^6}\\
		\leq & C \tau  \|e^{n+1}_{\sigma,h} \|^2_{L^2} + \frac{\mu \tau}{30} \|\nabla e^{n+1}_{\u,h} \|_{L^2}^2.
 	\end{split}
\end{align*}
where we use Lagrange’s mean value theorem.
\begin{itemize}
	\item Estimate of $2\tau (X_8,e^{n+1}_{\u,h} )$
\end{itemize}
\begin{align*}
	\begin{split}
		2\tau(X_8,e^{n+1}_{\textbf{u},h}) =&2\tau(e^{n+1}_\rho (\u^n \cdot  \nabla ) \u^{n+1} ,e^{n+1}_{\textbf{u},h}  )\\
		\leq & C  \| e^{n+1}_\rho\|_{L^2} \|   \| \u^n\|_{L^{\infty}} \| \u^{n+1}\|_{L^3} \| e^{n+1}_{\textbf{u},h}  \|_{L^6}\\
		\leq &C \tau h^4 +  \frac{\mu \tau}{30} \|\nabla e^{n+1}_{\u,h} \|_{L^2}^2,
			\end{split}
\end{align*}

\begin{itemize}
	\item Estimate of $2\tau (X_9,e^{n+1}_{\u,h} )$
\end{itemize}
\begin{align*}
	\begin{split}
		2\tau(X_9,e^{n+1}_{\textbf{u},h}) =&2\tau(e^{n+1}_{\rho,h} (\u^n \cdot  \nabla ) \u^{n+1} ,e^{n+1}_{\textbf{u},h}  )\\
		\leq & C \tau  \| e^{n+1}_{\rho,h}\|_{L^2} \|   \| \u^n\|_{L^{\infty}} \| \u^{n+1}\|_{L^3} \| e^{n+1}_{\textbf{u},h}  \|_{L^6}\\
		\leq &C \tau \|e^{n+1}_{\sigma,h}\|_{L^2}^2+  \frac{\mu \tau}{30} \|\nabla e^{n+1}_{\u,h} \|_{L^2}^2+C\tau h^4,
	\end{split}
\end{align*}
where we use (\ref{eq8}) and 
\begin{align}\label{eq12}
	\|e^{n+1}_{\rho,h} \|_{L^2}=&\|  \Pi_h \rho^{n+1} - \rho^{n+1}_h \|_{L^2} \nonumber \\
	=& \| \Pi_h \rho^{n+1} -\rho^{n+1} + \rho^{n+1} - \rho^{n+1}_h \|_{L^2} \nonumber\\
	\leq &C h^2 + \| \sigma^{n+1} + \sigma^{n+1}_h \|_{L^{\infty}} \| \sigma^{n+1} - \sigma^{n+1}_h\|_{L^2} \nonumber\\
	\leq & C h^2 + C \|e^{n+1}_{\sigma,h}\|_{L^2}.
\end{align}

\begin{itemize}
	\item Estimate of $2\tau (X_{10},e^{n+1}_{\u,h} )$
\end{itemize}
\begin{align*}
	\begin{split}
		2\tau(X_{10},e^{n+1}_{\textbf{u},h}) =&2\tau( \rho^{n+1}_h ( e^n_{\u} \cdot \nabla) \u^{n+1} ,e^{n+1}_{\u,h} )\\
		\leq & C \tau \| \rho^{n+1}_h\|_{L^{\infty}} \| e^n_{\u}\|_{L^2}  \| \nabla \u^{n+1} \|_{L^3} \| e^{n+1}_{\u,h}\|_{L^6}\\
		\leq & C \tau h^4 + \frac{\mu \tau}{30} \|\nabla e^{n+1}_{\u,h} \|_{L^2}^2,
	\end{split}
\end{align*}
where we use (\ref{u_projection_error}). 
\begin{itemize}
	\item Estimate of $2\tau (X_{11},e^{n+1}_{\u,h} )$
\end{itemize}
\begin{align*}
	\begin{split}
		2\tau(X_{11},e^{n+1}_{\textbf{u},h}) =&2\tau( \rho^{n+1}_h ( e^n_{\u,h} \cdot \nabla) \u^{n+1} ,e^{n+1}_{\u,h} )\\
		\leq & C \tau \| \rho^{n+1}_h\|_{L^{\infty}} \| e^n_{\u,h}\|_{L^2}  \| \nabla \u^{n+1} \|_{L^3} \| e^{n+1}_{\u,h}\|_{L^6}\\
		\leq & C \tau \| \sigma^n_h e^n_{\u,h}\|_{L^2}^2 + \frac{\mu \tau}{30} \|\nabla e^{n+1}_{\u,h} \|_{L^2}^2,
	\end{split}
\end{align*}
where we use (\ref{eq8}).
\begin{itemize}
	\item Estimate of $2\tau (X_{12},e^{n+1}_{\u,h} )$
\end{itemize}
\begin{align*}
	\begin{split}
		2\tau(X_{12},e^{n+1}_{\textbf{u},h}) =&2\tau( \rho^{n+1}_h ( \u^n_h \cdot \nabla)e^{n+1}_{\u} ,e^{n+1}_{\u,h} )\\
		\leq & C \tau \| \nabla \rho^{n+1}_h\|_{L^{3}}  \|  \u^{n}_h \|_{L^\infty}  \|  e^{n+1}_{\u}\|_{L^2} \| e^{n+1}_{\u,h}\|_{L^6}\\
		&+  C \tau \|  \rho^{n+1}_h\|_{L^{\infty}}  \| \nabla \u^{n}_h \|_{L^3}  \|  e^{n+1}_{\u}\|_{L^2} \| e^{n+1}_{\u,h}\|_{L^6}\\
		\leq & C \tau h^4 + \frac{\mu \tau}{30} \|\nabla e^{n+1}_{\u,h} \|_{L^2}^2,
	\end{split}
\end{align*}
where we use integration by parts and (\ref{eq9}), (\ref{nsre-4}).

\begin{itemize}
	\item Estimate of $2\tau (X_{13},e^{n+1}_{\u,h} )$
\end{itemize}
\begin{align*}
	\begin{split}
		2\tau(X_{13},e^{n+1}_{\textbf{u},h}) =&2\tau( \rho^{n+1}_h ( \u^n_h \cdot \nabla)e^{n+1}_{\u,h} ,e^{n+1}_{\u,h} )\\
		\leq & C \tau \| \rho^{n+1}_h\|_{L^{\infty}}  \|  \u^{n}_h \|_{L^\infty}  \| \nabla e^{n+1}_{\u,h}\|_{L^2} \| e^{n+1}_{\u,h}\|_{L^2}\\
		\leq & C \tau \| \sigma^{n+1}_h e^{n+1}_{\u,h}\|_{L^2}^2 + \frac{\mu \tau}{30} \|\nabla e^{n+1}_{\u,h} \|_{L^2}^2,
	\end{split}
\end{align*}

\begin{itemize}
	\item Estimate of $2\tau (X_{14},e^{n+1}_{\u,h} )$
\end{itemize}
\begin{align*}
	\begin{split}
		2\tau(X_{14},e^{n+1}_{\textbf{u},h}) =&\tau( \textbf{u}^{n+1} \nabla \cdot (e^{n+1}_{\rho} \textbf{u}^n),e^{n+1}_{\u,h})\\
		\leq & C \tau \| \nabla \u^{n+1}\|_{L^3} \|e^{n+1}_{\rho} \|_{L^2} \| \u^n\|_{L^{\infty}} \| e^{n+1}_{\u,h} \|_{L^6} \\
		& + C \tau \|  \u^{n+1}\|_{L^\infty} \|e^{n+1}_{\rho} \|_{L^2} \| \u^n\|_{L^{\infty}} \|\nabla e^{n+1}_{\u,h} \|_{L^2} \\
		\leq  &C \tau h^4 + \frac{\mu \tau}{30} \|\nabla e^{n+1}_{\u,h} \|_{L^2}^2,
	\end{split}
\end{align*}
where we use integration by parts.

\begin{itemize}
	\item Estimate of $2\tau (X_{15},e^{n+1}_{\u,h} )$
\end{itemize}

By using the above same method and (\ref{eq12}), we can get
\begin{align*}
	\begin{split}
		2\tau(X_{15},e^{n+1}_{\textbf{u},h}) =&\tau( \textbf{u}^{n+1} \nabla \cdot (e^{n+1}_{\rho,h} \textbf{u}^n),e^{n+1}_{\u,h} )\\
		\leq & C \tau \| \nabla \u^{n+1}\|_{L^3} \|e^{n+1}_{\rho,h} \|_{L^2} \| \u^n\|_{L^{\infty}} \| e^{n+1}_{\u,h} \|_{L^6} \\
		& + C \tau \|  \u^{n+1}\|_{L^\infty} \|e^{n+1}_{\rho,h} \|_{L^2} \| \u^n\|_{L^{\infty}} \| \nabla e^{n+1}_{\u,h} \|_{L^2} \\
		\leq  &C \tau   \| e^{n+1}_{\sigma,h}\|^2_{L^2} + \frac{\mu \tau}{30} \|\nabla e^{n+1}_{\u,h} \|_{L^2}^2,
	\end{split}
\end{align*}

\begin{itemize}
	\item Estimate of $2\tau (X_{16},e^{n+1}_{\u,h} )$
\end{itemize}
\begin{align*}
	\begin{split}
		2\tau(X_{16},e^{n+1}_{\textbf{u},h}) =&\tau( \textbf{u}^{n+1} \nabla \cdot (\rho^{n+1}_h e^n_{\u}),e^{n+1}_{\u,h} )\\
		\leq & C \tau \| \nabla \u^{n+1}\|_{L^3} \|  \rho^{n+1}_h \|_{L^\infty} \| e^n_{\u}\|_{L^{2}} \| e^{n+1}_{\u,h} \|_{L^6} \\
		&+ C \tau \|  \u^{n+1}\|_{L^\infty} \|  \rho^{n+1}_h \|_{L^\infty} \| e^n_{\u}\|_{L^{2}} \| \nabla e^{n+1}_{\u,h} \|_{L^2} \\
		\leq & C \tau h^4 + \frac{\mu \tau}{30} \|\nabla e^{n+1}_{\u,h} \|_{L^2}^2,
	\end{split}
\end{align*}
where we use integration by parts and (\ref{u_projection_error}).
\begin{itemize}
	\item Estimate of $2\tau (X_{17},e^{n+1}_{\u,h} )$
\end{itemize}
\begin{align*}
	\begin{split}
		2\tau(X_{17},e^{n+1}_{\textbf{u},h}) =&\tau( \textbf{u}^{n+1} \nabla \cdot (\rho^{n+1}_h e^n_{\u,h}),e^{n+1}_{\u,h} )\\
		\leq & C \tau \| \nabla \u^{n+1}\|_{L^3} \|  \rho^{n+1}_h \|_{L^\infty} \| e^n_{\u,h}\|_{L^{2}} \| e^{n+1}_{\u,h} \|_{L^6} \\
		&+ C \tau \|  \u^{n+1}\|_{L^\infty} \|  \rho^{n+1}_h \|_{L^\infty} \| e^n_{\u,h}\|_{L^{2}} \| \nabla e^{n+1}_{\u,h} \|_{L^2} \\
		\leq & C \tau \| \sigma^{n}_h e^{n}_{\u,h}\|_{L^2}^2+ \frac{\mu \tau}{30} \|\nabla e^{n+1}_{\u,h} \|_{L^2}^2,
	\end{split}
\end{align*}
where we use integration by parts and (\ref{eq8}).
\begin{itemize}
	\item Estimate of $2\tau (X_{18},e^{n+1}_{\u,h} )$
\end{itemize}
\begin{align*}
	\begin{split}
		2\tau(X_{18},e^{n+1}_{\textbf{u},h}) =&\tau( e^{n+1}_{\u,h} \nabla \cdot (\rho^{n+1}_h \u_h^n),e^{n+1}_{\u,h} )\\
		\leq & C \tau \| e^{n+1}_{\u,h}\|_{L^2} \|   \nabla \rho^{n+1}_h \|_{L^3} \| \u^n_h\|_{L^{\infty}} \| e^{n+1}_{\u,h} \|_{L^6} \\
		&+ C \tau \| e^{n+1}_{\u,h}\|_{L^2} \|    \rho^{n+1}_h \|_{L^\infty} \| \nabla \u_h^n\|_{L^{3}} \| e^{n+1}_{\u,h} \|_{L^6} \\
		\leq & C \tau \| \sigma^{n+1}_h e^{n+1}_{\u,h}\|_{L^2}^2+ \frac{\mu \tau}{30} \|\nabla e^{n+1}_{\u,h} \|_{L^2}^2,
	\end{split}
\end{align*}
where we use (\ref{eq8}), (\ref{nsre-4}), (\ref{eq9}). 
\begin{itemize}
	\item Estimate of $2\tau (X_{19},e^{n+1}_{\u,h} )$
\end{itemize}
\begin{align*}
	\begin{split}
		2\tau(X_{19},e^{n+1}_{\textbf{u},h}) =&\tau( e^{n+1}_{\u} \nabla \cdot (\rho^{n+1}_h \u_h^n),e^{n+1}_{\u,h} )\\
		\leq & C \tau \| e^{n+1}_{\u}\|_{L^2} \|   \nabla \rho^{n+1}_h \|_{L^3} \| \u^n_h\|_{L^{\infty}} \| e^{n+1}_{\u,h} \|_{L^6} \\
		&+ C \tau \| e^{n+1}_{\u}\|_{L^2} \|    \rho^{n+1}_h \|_{L^\infty} \| \nabla \u_h^n\|_{L^{3}} \| e^{n+1}_{\u,h} \|_{L^6} \\
		\leq & C \tau h^4+ \frac{\mu \tau}{30} \|\nabla e^{n+1}_{\u,h} \|_{L^2}^2,
	\end{split}
\end{align*}
where we use (\ref{u_projection_error}),  (\ref{nsre-4}), (\ref{eq9}). 
%
%
\begin{itemize}
	\item Estimate of $2\tau (X_{20},e^{n+1}_{\u,h} )$
\end{itemize}
\begin{align*}
	\begin{split}
		2\tau(X_{20},e^{n+1}_{\textbf{u},h}) =&2\tau (R^{n+1}_{\textbf{u}},e^{n+1}_{\textbf{u},h} ) \\
		\leq &C \tau \| R^{n+1}_{\textbf{u}}\|^2_{L^2} + C  \tau \| \sigma^{n+1}_h e^{n+1}_{\u,h}\|_{L^2}^2,
	\end{split}
\end{align*}

Substituting these estimates into (\ref{errorequation2})  we have
\begin{align*}
	\begin{split}
&	\| \sigma^{n+1}_h e^{n+1}_{\u,h} \|^2_{L^2} - \| \sigma^n_h e^n_{\u,h} \|^2_{L^2} + 2 \mu \tau \|\nabla e^{n+1}_{\u,h}\|^2_{L^2}\\
	\leq & C \tau h^4 + C\tau ( \|\sigma^{n+1}_h e^{n+1}_{\u,h} \| ^2_{L^2}+\|\sigma^{n}_h e^{n}_{\u,h} \| ^2_{L^2} ) +C \tau (\| R^{n+1}_{\textbf{u}}\|^2_{L^2} +\| R^{n+1}_{\sigma}\|^2_{L^2} )+ C \tau h^2 \|D_{\tau} e^{n+1}_{\sigma,h} \|^2_{L^2} \\
	& + C \tau   h^4 ( \| D_{\tau} \u^{n+1}\|_{H^2}^2 + \|D_{\tau} \sigma^{n+1}\|_{H^2}^2  ) +C \tau  \|  \u^n- \P_{0h}\u_h^n\|_{L^2}^2+ C \tau  \|e^{n+1}_{\sigma,h}\|^2_{L^2},
	\end{split}
\end{align*}
taking the sum and using (\ref{eq18}), (\ref{eq7}), (\ref{nsre-1}), (\ref{eq1}), we can derive
\begin{align}
&	\|\sigma^{n+1}_h \e^{n+1}_{\u,h} \|^2_{L^2} + \tau \sum_{i=1}^{n} \| \nabla \e^{n+1}_{\u,h} \|^2_{L^2}\\
	\leq & C (\tau^2+h^4) + C\tau \|\sigma_h^{n+1} \e^{n+1}_{\u,h} \|^2_{L^2} +C \tau \sum_{i=1}^{n} \|\u^n-\textbf{P}_{0h} \u^n_h\|^2_{L^2}\nonumber\\
	\leq &C(\tau^2+h^4) +C\tau \|\sigma_h^{n+1} \e^{n+1}_{\u,h} \|^2_{L^2}.\notag
\end{align}
where we notice that 
\begin{align} \label{eq21}
	&\| \u^n -\textbf{P}_{0h} \u^n_h \|_{L^2}  \nonumber\\
	\leq & C \| \u^n -\textbf{P}_{0h}\u^n + \textbf{P}_{0h}\u^n - \textbf{P}_{0h}\u^n_h\|_{L^2} \nonumber\\
	\leq& C  h^4+C \| e^n_{\u,h}\|_{L^2} ,
\end{align}
thus by applying the discrete Gronwall inequality in Lemma \ref{GronwallLemma}, we can derive 
\begin{align}
		\|\sigma^{n+1}_h \e^{n+1}_{\u,h} \|^2_{L^2} + \tau \sum_{i=1}^{n} \| \nabla \e^{n+1}_{\u,h} \|^2_{L^2 }	\leq  C (\tau^2+h^4),
\end{align}
by (\ref{eq8}), there exists some $C_1$ > 0 being independent of $\tau, h$ and such that
\begin{align}\label{nsre-8}
	\|\e^{n+1}_{\u,h} \|^2_{L^2} + \tau \sum_{i=1}^{n} \| \nabla \e^{n+1}_{\u,h} \|^2_{L^2 }	\leq  C_1 (\tau^2+h^4), \quad  \forall 1\leq n \leq N-1.
\end{align}
Thus, we prove that (\ref{u_n_error}) is valid by taking $C_0 \leq C_1$.
\subsection{Estimate of $\| e^{n+1}_{\theta,h}\|^2_{L^2}$}

Next, we need to estimate $\| e^{n+1}_{\theta,h}\|^2_{L^2}$ by the mathematical induction.
Letting $w_h=2 \tau \theta^{n+1}_h$ in error equation (\ref{errorequation3}), 
applying H\"{o}lder inequality and Young inequality, we can estimate $\sum\limits^{20}\limits_{i=1}(Y_i ,(e^{n+1}_{\sigma,h})$ as follows

\begin{itemize}
\item Estimate of $2\tau (Y_1,e^{n+1}_{\theta,h} )$
\end{itemize}

By recombining this term and  using (\ref{eq2}), (\ref{App9}), (\ref{eq8}), Lagrange’s mean value theorem,  we have
\begin{align*}
\begin{split}
2\tau (Y_1 , e^{n+1}_{\theta,h}) =&2 \tau (\sigma ^{n+1} _h D_{\tau} (\sigma ^{n+1} _h e^{n+1}_{\theta}), e^{n+1}_{\theta,h})\\
\leq&
2 \tau | (\rho ^{n+1}_h D_{\tau} e^{n+1}_{\theta} , e^{n+1} _{\theta,h})|+ 2 \tau |(\sigma ^{n+1} _h e^n_{\theta} D_{\tau} e^{n+1}_{\sigma,h} ,e^{n+1} _{\theta,h} )|+2 \tau| (\sigma^{n+1}_{h} e^n_{\theta} D_{\tau} (\Pi_h \sigma^{n+1}) , e^{n+1}_{\theta,h})|\\
\leq & C \tau \| \rho^{n+1}_h\|_{L^{\infty}} \| D_{\tau} e^{n+1}_{\theta} \|_{L^2} \| e^{n+1}_{\theta,h}\|_{L^2} + C\tau \|\sigma^{n+1}_h\|_{L^{\infty}} \| e^n_\theta\|_{L^2} \|D_{\tau} e^{n+1}_{\sigma,h}\|_{L^3}\|e^{n+1}_{\theta,h}\|_{L^6} \\
&+ C\tau\| \sigma^{n+1}_h\|_{L^{\infty}} \| e^n_\theta\|_{L^2} \| D_{\tau} (\Pi_h \sigma^{n+1}) \|_{L^3} \|e^{n+1}_{\sigma,h}\|_{L^6} \\
\leq&
 C\tau h^4 \| D_{\tau} \theta^{n+1} \| ^2_{H^2} +C\tau h^4 +\frac{\mu\tau}{30}  \| \nabla e^{n+1}_{\theta,h} \| ^2_{L^2}.
\end{split}
\end{align*}
\begin{itemize}
\item Estimate of $2\tau (Y_2,e^{n+1}_{\theta,h} )$
\end{itemize}
By recombining this term and using (\ref{App2}), Lagrange’s mean value theorem,  one has 
\begin{align*}
\begin{split}
2\tau(Y_2,e^{n+1}_{\theta,h}) =&2\tau (e^{n+1}D_{\tau}(\sigma^{n+1}\theta^{n+1}) , e^{n+1}_{\theta})\\
\leq&  2\tau |( e^{n+1}_{\sigma} \sigma^{n+1} D_{\tau}\theta^{n+1}   , e^{n+1}_{\theta,h})| + 2\tau | ( e^{n+1}_\sigma (D_{\tau}\sigma^{n+1} ) \theta^{n+1} ,e^{n+1}_{\theta,h} ) |\\
\leq & C \tau h^4 +\frac{\mu \tau}{30} \| \nabla e^{n+1}_{\theta,h}\|^2_{L^2}.
\end{split}
\end{align*}
Similar to  $2\tau (Y_2,e^{n+1}_{\theta,h} )$, we can have
\begin{itemize}
\item Estimate of $2\tau (Y_3,e^{n+1}_{\theta,h} )$
\end{itemize}
 
\begin{align*}
\begin{split}
2\tau(Y_3,e^{n+1}_{\theta,h}) =& 2 \tau(\sigma^{n+1}_h e^{n+1}_{\sigma}D_{\tau}\theta^{n+1},e^{n+1}_{\theta,h} )\\
\leq & C\tau h^4 + +\frac{\mu \tau}{30} \| \nabla e^{n+1}_{\theta,h}\|^2_{L^2}.
\end{split}
\end{align*}

\begin{itemize}
\item Estimate of $2\tau (Y_4,e^{n+1}_{\theta,h} )$
\end{itemize}
\begin{align*}
\begin{split}
2\tau(Y_4,e^{n+1}_{\theta,h}) =& 2 \tau(\sigma^{n+1}_h e^{n+1}_{\sigma,h}D_{\tau}\theta^{n+1},e^{n+1}_{\theta,h} )\\
\leq & C\tau h^4 + C\tau \|e^{n+1}_{\sigma,h}\|^2_{L^2},
\end{split}
\end{align*}

\begin{itemize}
\item Estimate of $2\tau (Y_5,e^{n+1}_{\theta,h} )$
\end{itemize}
\begin{align*}
\begin{split}
2\tau(Y_5,e^{n+1}_{\theta,h}) =&2\tau (\sigma^{n+1}_h D_{\tau}e^{n+1}_{\sigma} \theta ^n ,e^{n+1}_{\theta,h})\\
\leq & C \tau h^4 \| D_{\tau} \sigma^{n+1} \|^2_{H^2}+ C \tau \| \sigma^{n+1}_h e^{n+1}_{\theta,h} \|^2_{L^2},
\end{split}
\end{align*}
where we use (\ref{App9}), (\ref{eq8}).
\begin{itemize}
\item Estimate of $2\tau (Y_6,e^{n+1}_{\theta,h} )$
\end{itemize}
\begin{align*}
\begin{split}
2\tau(Y_6,e^{n+1}_{\theta,h}) =& 2 \tau(\sigma^{n+1}_h D_{\tau}e^{n+1}_{\sigma,h} \theta ^n ,e^{n+1}_{\theta,h})\\
\leq & 2\tau|( \sigma^{n+1}_h D_{\tau}e^{n+1}_{\sigma,h} e^n_{\theta}, e^{n+1}_{\theta,h}  )| +2\tau|( \sigma^{n+1}_h D_{\tau}e^{n+1}_{\sigma,h} e^n_{\theta,h}, e^{n+1}_{\theta,h}  )| \\
& +2\tau|( \sigma^{n+1}_h D_{\tau}e^{n+1}_{\sigma,h} \theta^n_h, e^{n+1}_{\theta,h}  )| \\
\leq & C\tau h^4 + \frac{\mu \tau}{30} \|\nabla e^{n+1}_{\theta,h}\|^2_{L^2} + C\tau \|\sigma^n_h e^n_{\sigma,h}\|^2_{L^2} +2\tau|( \sigma^{n+1}_h D_{\tau}e^{n+1}_{\sigma,h} \theta^n_h, e^{n+1}_{\theta,h}  )|,
\end{split}
\end{align*}
where we use (\ref{eq2}), (\ref{eq8}), (\ref{nsre-5}), (\ref{eq15}).

In order to estimate  $2\tau|( \sigma^{n+1}_h D_{\tau}e^{n+1}_{\sigma,h} \theta^n_h, e^{n+1}_{\theta,h}  )|$, we adapt the same projection operator in (\ref{Rh}).

By (\ref{eq15}), (\ref{nsre-2}) , one has
\begin{align*}
	\|\nabla(\theta_h^n e_{\theta,h}^{n+1})\|_{L^2}
	\leq \|\nabla\theta_h^n\|_{L^3}\|e^{n+1}_{\theta,h}\|_{L^6}+\|\theta_h^n\|_{L^\infty}\|\nabla e^{n+1}_{\theta,h}\|_{L^2}
	\leq  C \|\nabla e_{\theta,h}^{n+1}\|_{L^2}.
\end{align*}

It follows from (\ref{Rh}) that 
\begin{align}
	\label{Rh2}
	\|(\theta_h^n\cdot e_{\theta,h}^{n+1})-R_h(\theta_h^n\cdot e_{\theta,h}^{n+1})\|_{L^2}
	\leq Ch \|\nabla e_{\theta,h}^{n+1}\|_{L^2}.
\end{align}
thus
\begin{eqnarray}
	\label{X71}
	2\tau |(\sigma_h^{n+1}D_{\tau} e^{n+1}_{\sigma,h} \theta_h^n,e_{\theta,h}^{n+1}) | & \leq &
	2\tau |(\sigma_h^{n+1}D_{\tau}e^{n+1}_{\sigma,h} , R_h(\theta_h^n e_{\theta,h}^{n+1}) ) | \nn \\
	& &+  2\tau |(\sigma_h^{n+1}D_{\tau}e^{n+1}_{\sigma,h} , (\theta_h^n  e_{\theta,h}^{n+1})-R_h(\theta_h^n  e_{\theta,h}^{n+1}) ) |  \nn \\
	& \leq &       2\tau |(\sigma_h^{n+1}D_{\tau}e^{n+1}_{\sigma,h} , R_h(\theta_h^n e_{\theta,h}^{n+1}) ) | \nn \\
	& & +\frac{\mu\tau}{21}\| \nabla e_{\theta,h}^{n+1}\| ^2_{L^2} + C \tau h^2 \|D_\tau e^{n+1}_{\sigma,h} \|_{L^2}^2 .
\end{eqnarray}

Taking $r_h= 2\tau \sigma^{n+1}_h R_h (\theta^n_h  e^{n+1}_{\theta,h})$ in (\ref{errorequation1}), we have 

\begin{align}
	2\tau | (D_{\tau}e^{n+1}_{\sigma,h} ,   \sigma^{n+1}_h R_h (\theta^n_h  e^{n+1}_{\theta,h} ) | = &2\tau |(\nabla\sigma^{n+1}\cdot (\u^n- \P_{0h}\u_h^n),\sigma^{n+1}_h R_h (\theta^n_h \cdot e^{n+1}_{\theta,h} ) | \nonumber  \\
	+ &2\tau|( \nabla e^{n+1}_{\sigma,h}\cdot (\P_{0h}\u_h^n-\u^n),\sigma^{n+1}_h R_h (\theta^n_h  e^{n+1}_{\theta,h} ) |\nonumber\\
	+&2\tau| (\nabla e_{\sigma,h}^{n+1}\cdot \u^n,\sigma^{n+1}_h R_h (\theta^n_h  e^{n+1}_{\theta,h}) |\\
	+& 2\tau|(\nabla e^{n+1}_\sigma \cdot (\textbf{P}_{0h}\u^n_h-\u^n),\sigma^{n+1}_h R_h (\theta^n_h  e^{n+1}_{\theta,h})| \nonumber\\
	+&2\tau| (\nabla e^{n+1}_\sigma \cdot \u^n,\sigma^{n+1}_h R_h (\theta^n_h  e^{n+1}_{\theta,h})| \nonumber\\
	+&2\tau|(R^{n+1}_{\sigma} ,\sigma^{n+1}_h R_h (\theta^n_h  e^{n+1}_{\theta,h})| \nonumber\\
	=& 2\tau \sum\limits^{6}\limits_{i=1}|(S_i ,\sigma^{n+1}_h R_h (\theta^n_h  e^{n+1}_{\theta,h})|,
\end{align}
\begin{itemize}
	\item Estimate of $2\tau |(S_1,\sigma^{n+1}_h R_h (\theta^n_h  e^{n+1}_{\theta,h} )|$
\end{itemize}
\begin{align*}
	\begin{split}
		2\tau |(S_1,\sigma^{n+1}_h R_h (\u^n_h \cdot e^{n+1}_{\u,h} )| =&2\tau |(\nabla\sigma^{n+1}\cdot (\u^n- \P_{0h}\u_h^n),\sigma^{n+1}_h R_h (\theta^n_h  e^{n+1}_{\theta,h} ) |\\
		\leq&
		C\tau \| \nabla \sigma^{n+1} \|_{L^{\infty}}  \| \sigma^{n+1}_h\|_{L^{\infty}} \|  \u^n- \P_{0h}\u_h^n\|_{L^2}  \|R_h ( \theta^n_h  e^{n+1}_{\theta,h})\|_{L^2}\\
		\leq &
		C \tau \|  \u^n- \P_{0h}\u_h^n\|_{L^2}  \|R_h (\theta^n_h  e^{n+1}_{\theta,h})\|_{L^2}\\
		\leq &
		C \tau  \|  \u^n- \P_{0h}\u_h^n\|_{L^2}^2+C \tau \| \sigma^{n+1}_h e^{n+1}_{\theta,h}\|^2_{L^2},
	\end{split}
\end{align*}
where we use (\ref{eq15}), (\ref{eq8}).
\begin{itemize}
	\item Estimate of $2\tau |(S_2,\sigma^{n+1}_h R_h (\theta^n_h e^{n+1}_{\theta,h} )|$
\end{itemize}

For sufficiently small $h$ such that $(1+C_0) h^{\frac{1}{2}} \leq 1$, we have 
\begin{align*}
	\begin{split}
		2\tau |(S_2,\sigma^{n+1}_h R_h (\theta^n_h \cdot e^{n+1}_{\theta,h} )| =&2\tau|( \nabla e^{n+1}_{\sigma,h}\cdot (\P_{0h}\u_h^n-\u^n),\sigma^{n+1}_h R_h (\theta^n_h  e^{n+1}_{\theta,h} ) |\\
		\leq&
		C\tau \| \nabla  e^{n+1}_{\sigma,h} \|_{L^{\infty}}  \| \sigma^{n+1}_h\|_{L^{\infty}} \|  \u^n- \P_{0h}\u_h^n\|_{L^2}  \|R_h ( \theta^n_h  e^{n+1}_{\theta,h})  -\theta^n_h  e^{n+1}_{\theta,h}\|_{L^2}\\
		&+	C\tau \| \nabla  e^{n+1}_{\sigma,h} \|_{L^{2}}  \| \sigma^{n+1}_h\|_{L^{\infty}} \|  \u^n- \P_{0h}\u_h^n\|_{L^3}  \|\theta^n_h\|_{L^{\infty}}\| e^{n+1}_{\theta,h}\|_{L^6}\\
		\leq &
		C \tau (1+C_0) \tau h^{\frac{1}{2}} \|e^{n+1}_{\sigma,h} \|_{L^2}  \| \nabla e^{n+1}_{\theta,h}\|_{L^2}  \\
		\leq &
		C \tau  \|e^{n+1}_{\sigma,h} \|_{L^2}^2+ \frac{\mu \tau}{30} \|\nabla e^{n+1}_{\theta,h} \|_{L^2}^2,
	\end{split}
\end{align*}
where by inverse inequality (\ref{inverse2}), (\ref{eq19}).
\begin{itemize}
	\item Estimate of $2\tau |(S_3,\sigma^{n+1}_h R_h (\theta^n_h  e^{n+1}_{\theta,h} )|$
\end{itemize}
\begin{align*}
	\begin{split}
		2\tau |(S_3,\sigma^{n+1}_h R_h (\theta^n_h \cdot e^{n+1}_{\theta,h} )| =&2\tau| (\nabla e_{\sigma,h}^{n+1}\cdot \u^n,\sigma^{n+1}_h R_h (\theta^n_h  e^{n+1}_{\theta,h}) |\\
		\leq&
		2 \tau |  (\nabla e_{\sigma,h}^{n+1}\cdot \u^n,\sigma^{n+1}_h  (R_h (\theta^n_h  e^{n+1}_{\theta,h} ) - \theta^n_h \cdot e^{n+1}_{\theta,h})  |\\
		&+2 \tau |  (\nabla e_{\sigma,h}^{n+1}\cdot \u^n,\sigma^{n+1}_h  ( \theta^n_h  e^{n+1}_{\theta,h})  |\\
		\leq & C  \tau\| \u^n\|_{L^{\infty}} \| \sigma^{n+1}_h \|_{l^{\infty}} \| \nabla e^{n+1}_{\sigma,h} \|_{L^2} \| R_h (\theta^n_h e^{n+1}_{\theta,h} ) - \theta^n_h e^{n+1}_{\theta,h}\|_{L^2}\\
		&+ C  \tau\| \nabla \u^n\|_{L^3}  \|\theta^n_h\|_{L^{\infty}} \| e^{n+1}_{\sigma,h}\|_{L^2} \| \sigma^{n+1}_h\|_{L^{\infty}} \|e^{n+1}_{\theta,h} \|_{L^2} \\
		&+ C  \tau\|e^{n+1}_{\sigma,h} \|_{L^2} \| \u^n\|_{L^{\infty}}   \| \theta^n_h\|_{L^{\infty}}\| \nabla \sigma^{n+1}_h \|_{L^3} \| e^{n+1}_{\theta,h} \|_{L^6}\\
		&+ C  \tau\|e^{n+1}_{\sigma,h} \|_{L^2} \| \u^n\|_{L^{\infty}} \|  \sigma^{n+1}_h \|_{L^\infty}\| \nabla (\theta^n_h  e^{n+1}_{\theta,h}) \|_{L^6}\\
		\leq &C \tau \|e^{n+1}_{\sigma,h} \|_{L^2}\| \nabla e^{n+1}_{\theta,h} \|_{L^2}\\
		\leq &  \frac{\mu \tau}{30} \|\nabla e^{n+1}_{\theta,h} \|_{L^2}^2 + C\tau \| e^{n+1}_{\sigma,h} \|^2_{L^2},
	\end{split}
\end{align*}

where we use the integration by parts and (\ref{nsre-6}), (\ref{nsre-4}).

\begin{itemize}
	\item Estimate of $2\tau |(S_4,\sigma^{n+1}_h R_h (\theta^n_h  e^{n+1}_{\theta,h} )|$
\end{itemize}

For sufficiently small $h$ such that $Ch^{\frac{1}{2}} \leq 1$, we can get 
\begin{align*}
	\begin{split}
		2\tau |(S_4,\sigma^{n+1}_h R_h (\theta^n_h e^{n+1}_{\theta,h} )| =&2\tau|(\nabla e^{n+1}_\sigma \cdot (\textbf{P}_{0h} \u^n_h-\u^n),\sigma^{n+1}_h R_h (\theta^n_h  e^{n+1}_{\theta,h})|\\
		\leq& C \tau \| \nabla e^{n+1}_{\sigma}\|_{L^\infty}\|\textbf{P}_{0h} \u^n_h-\u^n\|_{L^2}  \| R_h (\theta^n_h e^{n+1}_{\theta,h})\|_{L^2} \|\sigma^{n+1}_h\|_{\infty} \\
		\leq & C  \tau h^{\frac{1}{2}} \|\textbf{P}_{0h} \u^n_h-\u^n\|_{L^2}\| R_h (\theta^n_h e^{n+1}_{\theta,h})\|_{L^2}\\
		\leq &	C\tau\|\textbf{P}_{0h} \u^n_h-\u^n\|_{L^2}^2+C\tau \|  \sigma^{n+1}_h e^{n+1}_{\theta,h}\|^2_{L^2},
	\end{split}
\end{align*}
where we use (\ref{eq15}), (\ref{eq8}), (\ref{inverse2}) and (\ref{App2}),

\begin{itemize}
	\item Estimate of $2\tau |(S_5,\sigma^{n+1}_h R_h (\theta^n_h  e^{n+1}_{\theta,h} )|$
\end{itemize}
\begin{align*}
	\begin{split}
		2\tau |(S_5,\sigma^{n+1}_h R_h (\theta^n_h \cdot e^{n+1}_{\theta,h} )| =&2\tau| (\nabla e^{n+1}_\sigma  \u^n,\sigma^{n+1}_h R_h (\theta^n_h  e^{n+1}_{\theta,h})|\\
		\leq& C\tau   \| \nabla e^{n+1}_{\sigma} \|_{L^2} \|  \u^n \|_{L^{\infty}} \| \sigma^{n+1}_h\|_{L^{\infty}} \|  R_h (\theta^n_h \cdot e^{n+1}_{\theta,h})\|_{L^2}\\
		\leq & C \tau h^4 + C \tau \|  \sigma^{n+1}_h e^{n+1}_{\theta,h}\|^2_{L^2},
	\end{split}
\end{align*}
where we use (\ref{eq15}),(\ref{eq8}).
\begin{itemize}
	\item Estimate of $2\tau |(S_6,\sigma^{n+1}_h R_h (\theta^n_h \cdot e^{n+1}_{\theta,h} )|$
\end{itemize}
\begin{align*}
	\begin{split}
		2\tau |(S_6,\sigma^{n+1}_h R_h (\theta^n_h  e^{n+1}_{\theta,h} )| =&2\tau|(R^{n+1}_{\sigma} ,\sigma^{n+1}_h R_h (\theta^n_h  e^{n+1}_{\theta,h})| \\
		\leq & C \tau \| R^{n+1}_{\sigma}\|_{L^2} \|\sigma^{n+1}_h\|_{L^{\infty}}  \|  R_h (\theta^n_h \cdot e^{n+1}_{\theta,h})\|_{L^2}\\
		\leq  &C \tau  \| R^{n+1}_{\sigma}\|_{L^2}^2 + C \tau  \|  \sigma^{n+1}_h e^{n+1}_{\theta,h}\|^2_{L^2},
	\end{split}
\end{align*}
Substituting these estimates into (\ref{eq11}), we have
\begin{align}
	2\tau | (D_{\tau}e^{n+1}_{\sigma,h} ,   \sigma^{n+1}_h R_h (\theta^n_h  e^{n+1}_{\theta,h} ) | 
	\leq & C \tau h^4+C \tau \| e^{n+1}_{\theta,h}\|^2_{L^2}+  \frac{\mu \tau}{30} \|\nabla e^{n+1}_{\theta,h} \|_{L^2}^2 + C\tau \| e^{n+1}_{\sigma,h} \|^2_{L^2} \nonumber\\
	&+ C \tau  \| R^{n+1}_{\sigma}\|_{L^2}^2+C \tau h^2 \|D_\tau e^{n+1}_{\sigma,h} \|_{L^2}^2 +C\tau\|\textbf{P}_{0h} \u^n_h-\u^n\|_{L^2}^2.
\end{align}
thus
\begin{align}
	\begin{split}
		2\tau(Y_6,e^{n+1}_{\textbf{u},h}) =& 2 \tau(\sigma^{n+1}_h D_{\tau}e^{n+1}_{\sigma,h} \theta ^n ,e^{n+1}_{\theta,h})\\
		\leq &   C\tau \|\sigma^n_h e^n_{\sigma,h}\|^2_{L^2} +C \tau h^4+C \tau \|  \sigma^{n+1}_h e^{n+1}_{\theta,h}\|^2_{L^2}+  \frac{\mu \tau}{30} \|\nabla e^{n+1}_{\theta,h} \|_{L^2}^2 + C\tau \| e^{n+1}_{\sigma,h} \|^2_{L^2}\\
		&+ C \tau  \| R^{n+1}_{\sigma}\|_{L^2}^2+C \tau h^2 \|D_\tau e^{n+1}_{\sigma,h} \|_{L^2}^2 +C\tau\|\textbf{P}_{0h} \u^n_h-\u^n\|_{L^2}^2.
	\end{split}
\end{align}

\begin{itemize}
	\item  Estimate of $2 \tau(  Y_7 ,e^{n+1}_{\theta,h} )  $
\end{itemize}
\begin{align*}
	\begin{split}
		2 \tau (Y_7 ,e^{n+1}_{\theta,h})  \leq &2 \tau | e^{n+1}_{\sigma,h}  \sigma^{n+1} D_{\tau} \theta^{n+1} ,e^{n+1}_{\theta,h} )  | +  2 \tau | e^{n+1}_{\sigma,h}  \theta^{n+1} D_{\tau} \sigma^{n+1} ,e^{n+1}_{\theta,h} )  | \\
		\leq & C \tau \| \sigma^{n+1}\|_{L^{\infty}}   \|e^{n+1}_{\sigma,h} \|_{L^2} \| D_{\tau} \theta^{n+1} \|_{L^3}  \|  e^{n+1}_{\theta,h}\|_{L^6}\\
		&+ C \tau \| \theta^{n+1}\|_{L^{\infty}}   \|e^{n+1}_{\sigma,h} \|_{L^2} \|D_{\tau} \sigma^{n+1} \|_{L^3}  \|  e^{n+1}_{\theta,h}\|_{L^6}\\
		\leq & C \tau \| e^{n+1} _{\sigma,h} \| ^2_{L^2} + \frac{\mu \tau}{30} \|\nabla e^{n+1}_{\theta,h}\|^2_{L^2},
 	\end{split}
\end{align*}
where we use Lagrange’s mean value theorem.

\begin{itemize}
	\item  Estimate of $2 \tau(  Y_8 ,e^{n+1}_{\theta,h} )  $
\end{itemize}
\begin{align*}
	\begin{split}
		2 \tau (Y_8,e^{n+1}_{\theta,h})  &\leq  2 \tau \| e^{n+1} _{\rho}\|_{L^2} \| \textbf{u} ^n \|_{L^{\infty}} \| \nabla \theta^{n+1} \|_{L^3} \| e^{n+1}_{\theta,h}\|_{L^6}\\
		\leq &  \frac{\mu \tau}{30} \|\nabla e^{n+1}_{\theta,h}\|^2_{L^2} + C  \tau h^4,
 	\end{split}
\end{align*}

\begin{itemize}
	\item Estimate of $2\tau (Y_9,e^{n+1}_{\theta,h} )$
\end{itemize}
\begin{align*}
	\begin{split}
		2\tau(Y_9,e^{n+1}_{\theta,h}) =&2\tau(e^{n+1}_{\rho,h} (\u^n \cdot  \nabla ) \theta^{n+1} ,e^{n+1}_{\theta,h}  )\\
		\leq & C \tau  \| e^{n+1}_{\rho,h}\|_{L^2} \|   \| \u^n\|_{L^{\infty}} \| \theta^{n+1}\|_{L^3} \| e^{n+1}_{\theta,h}  \|_{L^6}\\
		\leq &C \tau \|e^{n+1}_{\sigma,h}\|_{L^2}^2+  \frac{\mu \tau}{30} \|\nabla e^{n+1}_{\theta,h} \|_{L^2}^2+C\tau h^4,
	\end{split}
\end{align*}
where we use (\ref{eq8}) and (\ref{eq12}),
\begin{itemize}
	\item Estimate of $2\tau (Y_{10},e^{n+1}_{\theta,h} )$
\end{itemize}
\begin{align*}
	\begin{split}
		2\tau(Y_{10},e^{n+1}_{\textbf{u},h}) =&2\tau( \rho^{n+1}_h ( e^n_{\u} \cdot \nabla) \theta^{n+1} ,e^{n+1}_{\theta,h} )\\
		\leq & C \tau \| \rho^{n+1}_h\|_{L^{\infty}} \| e^n_{\u}\|_{L^2}  \| \nabla \theta^{n+1} \|_{L^3} \| e^{n+1}_{\theta,h}\|_{L^6}\\
		\leq & C \tau h^4 + \frac{\mu \tau}{30} \|\nabla e^{n+1}_{\theta,h} \|_{L^2}^2,
	\end{split}
\end{align*}
where we use (\ref{u_projection_error}).
\begin{itemize}
	\item Estimate of $2\tau (Y_{11},e^{n+1}_{\theta,h} )$
\end{itemize}
\begin{align*}
	\begin{split}
		2\tau(Y_{11},e^{n+1}_{\theta,h}) =&2\tau( \rho^{n+1}_h ( e^n_{\u,h} \cdot \nabla) \theta^{n+1} ,e^{n+1}_{\theta,h} )\\
		\leq & C \tau \| \rho^{n+1}_h\|_{L^{\infty}} \| e^n_{\u,h}\|_{L^2}  \| \nabla \theta^{n+1} \|_{L^3} \| e^{n+1}_{\theta,h}\|_{L^6}\\
		\leq & C \tau \| \sigma^n_h e^n_{\u,h}\|_{L^2}^2 + \frac{\mu \tau}{30} \|\nabla e^{n+1}_{\theta,h} \|_{L^2}^2,
	\end{split}
\end{align*}
where we use (\ref{eq8}).
\begin{itemize}
	\item Estimate of $2\tau (Y_{12},e^{n+1}_{\theta,h} )$
\end{itemize}
\begin{align*}
	\begin{split}
		2\tau(Y_{12},e^{n+1}_{\textbf{u},h}) =&2\tau( \rho^{n+1}_h ( \u^n_h \cdot \nabla)e^{n+1}_{\theta} ,e^{n+1}_{\theta,h} )\\
		\leq & C \tau \| \nabla \rho^{n+1}_h\|_{L^{3}}  \|  \u^{n}_h \|_{L^\infty}  \|  e^{n+1}_{\theta}\|_{L^2} \| e^{n+1}_{\theta,h}\|_{L^6}\\
		&+  C \tau \|  \rho^{n+1}_h\|_{L^{\infty}}  \| \nabla \u^{n}_h \|_{L^3}  \|  e^{n+1}_{\theta}\|_{L^2} \| e^{n+1}_{\theta,h}\|_{L^6}\\
		\leq & C \tau h^4 + \frac{\mu \tau}{30} \|\nabla e^{n+1}_{\theta,h} \|_{L^2}^2,
	\end{split}
\end{align*}
where we use integration by parts and (\ref{eq9}), (\ref{nsre-4}).

\begin{itemize}
	\item Estimate of $2\tau (Y_{13},e^{n+1}_{\theta,h} )$
\end{itemize}
\begin{align*}
	\begin{split}
		2\tau(Y_{13},e^{n+1}_{\theta,h}) =&2\tau( \rho^{n+1}_h ( \u^n_h \cdot \nabla)e^{n+1}_{\theta,h} ,e^{n+1}_{\theta,h} )\\
		\leq & C \tau \| \rho^{n+1}_h\|_{L^{\infty}}  \|  \u^{n}_h \|_{L^\infty}  \| \nabla e^{n+1}_{\theta,h}\|_{L^2} \| e^{n+1}_{\theta,h}\|_{L^2}\\
		\leq & C \tau \|  \sigma^{n+1}_h  e^{n+1}_{\theta,h}\|_{L^2}^2 + \frac{\mu \tau}{30} \|\nabla e^{n+1}_{\theta,h} \|_{L^2}^2,
	\end{split}
\end{align*}

\begin{itemize}
	\item Estimate of $2\tau (Y_{14},e^{n+1}_{\theta,h} )$
\end{itemize}
\begin{align*}
	\begin{split}
		2\tau(Y_{14},e^{n+1}_{\textbf{u},h}) =&\frac{1}{2} ( \theta^{n+1} \nabla \cdot (e^{n+1}_{\rho} \textbf{u}^n),e^{n+1}_{\theta,h})\\
		\leq & C \tau \| \nabla \theta^{n+1}\|_{L^3} \|e^{n+1}_{\rho} \|_{L^2} \| \u^n\|_{L^{\infty}} \| e^{n+1}_{\theta,h} \|_{L^6} \\
		& + C \tau \|  \theta^{n+1}\|_{L^\infty} \|e^{n+1}_{\rho} \|_{L^2} \| \u^n\|_{L^{\infty}} \| e^{n+1}_{\theta,h} \|_{L^2} \\
		\leq  &C \tau h^4 + \frac{\mu \tau}{30} \|\nabla e^{n+1}_{\theta,h} \|_{L^2}^2,
	\end{split}
\end{align*}
where we use integration by parts,

\begin{itemize}
	\item Estimate of $2\tau (Y_{15},e^{n+1}_{\theta,h} )$
\end{itemize}

By using the above same method and (\ref{eq12}), we can get
\begin{align*}
	\begin{split}
		2\tau(Y_{15},e^{n+1}_{\theta,h}) =&\frac{1}{2} ( \theta^{n+1} \nabla \cdot (e^{n+1}_{\rho,h} \textbf{u}^n),e^{n+1}_{\theta,h} )\\
		\leq & C \tau \| \nabla \theta^{n+1}\|_{L^3} \|e^{n+1}_{\rho,h} \|_{L^2} \| \u^n\|_{L^{\infty}} \| e^{n+1}_{\theta,h} \|_{L^6} \\
		& + C \tau \|  \theta^{n+1}\|_{L^\infty} \|e^{n+1}_{\rho,h} \|_{L^2} \| \u^n\|_{L^{\infty}} \| \nabla e^{n+1}_{\theta,h} \|_{L^2} \\
		\leq  &C \tau \| e^{n+1}_{\sigma,h} \|^2_{L^2} + \frac{\mu \tau}{30} \|\nabla e^{n+1}_{\theta,h} \|_{L^2}^2,
	\end{split}
\end{align*}

\begin{itemize}
	\item Estimate of $2\tau (Y_{16},e^{n+1}_{\theta,h} )$
\end{itemize}
\begin{align*}
	\begin{split}
		2\tau(Y_{16},e^{n+1}_{\theta,h}) =&\frac{1}{2} ( \theta^{n+1} \nabla \cdot (\rho^{n+1}_h e^n_{\u}),e^{n+1}_{\u,h} )\\
		\leq & C \tau \| \nabla \theta^{n+1}\|_{L^3} \|  \rho^{n+1}_h \|_{L^\infty} \| e^n_{\u}\|_{L^{2}} \| e^{n+1}_{\u,h} \|_{L^6} \\
		&+ C \tau \|  \theta^{n+1}\|_{L^\infty} \|  \rho^{n+1}_h \|_{L^\infty} \| e^n_{\u}\|_{L^{2}} \| \nabla e^{n+1}_{\u,h} \|_{L^2} \\
		\leq & C \tau h^4 + \frac{\mu \tau}{30} \|\nabla e^{n+1}_{\u,h} \|_{L^2}^2,
	\end{split}
\end{align*}
where we use integration by parts and (\ref{u_projection_error}).
\begin{itemize}
	\item Estimate of $2\tau (Y_{17},e^{n+1}_{\theta,h} )$
\end{itemize}
\begin{align*}
	\begin{split}
		2\tau(Y_{17},e^{n+1}_{\theta,h}) =&\frac{1}{2} ( \theta^{n+1} \nabla \cdot (\rho^{n+1}_h e^n_{\u,h}),e^{n+1}_{\theta,h} )\\
		\leq & C \tau \| \nabla \theta^{n+1}\|_{L^3} \|  \rho^{n+1}_h \|_{L^\infty} \| e^n_{\u,h}\|_{L^{2}} \| e^{n+1}_{\theta,h} \|_{L^6} \\
		&+ C \tau \|  \theta^{n+1}\|_{L^\infty} \|  \rho^{n+1}_h \|_{L^\infty} \| e^n_{\u,h}\|_{L^{2}} \| \nabla e^{n+1}_{\theta,h} \|_{L^2} \\
		\leq & C \tau \| \sigma^{n}_h e^{n}_{\u,h}\|_{L^2}^2+ \frac{\mu \tau}{30} \|\nabla e^{n+1}_{\theta,h} \|_{L^2}^2,
	\end{split}
\end{align*}
where we use integration by parts and (\ref{eq8}).
\begin{itemize}
	\item Estimate of $2\tau (Y_{18},e^{n+1}_{\theta,h} )$
\end{itemize}
\begin{align*}
	\begin{split}
		2\tau(Y_{18},e^{n+1}_{\theta,h}) =&\frac{1}{2} ( e^{n+1}_{\theta,h} \nabla \cdot (\rho^{n+1}_h \u_h^n),e^{n+1}_{\theta,h} )\\
		\leq & C \tau \| e^{n+1}_{\theta,h}\|_{L^2} \|   \nabla \rho^{n+1}_h \|_{L^3} \| \u^n_h\|_{L^{\infty}} \| e^{n+1}_{\theta,h} \|_{L^6} \\
		&+ C \tau \| e^{n+1}_{\theta,h}\|_{L^2} \|    \rho^{n+1}_h \|_{L^\infty} \| \nabla \u_h^n\|_{L^{3}} \| e^{n+1}_{\theta,h} \|_{L^6} \\
		\leq & C \tau \|\sigma^{n+1}_h e^{n+1}_{\theta,h}\|_{L^2}^2+ \frac{\mu \tau}{30} \|\nabla e^{n+1}_{\theta,h} \|_{L^2}^2,
	\end{split}
\end{align*}
where we use (\ref{eq8}), (\ref{nsre-4}), (\ref{eq9}). 

\begin{itemize}
	\item Estimate of $2\tau (Y_{19},e^{n+1}_{\theta,h} )$
\end{itemize}
\begin{align*}
	\begin{split}
		2\tau(Y_{19},e^{n+1}_{\theta,h}) =&\frac{1}{2} ( e^{n+1}_{\theta} \nabla \cdot (\rho^{n+1}_h \u_h^n),e^{n+1}_{\theta,h} )\\
		\leq & C \tau \| e^{n+1}_{\theta}\|_{L^2} \|   \nabla \rho^{n+1}_h \|_{L^3} \| \u^n_h\|_{L^{\infty}} \| e^{n+1}_{\theta,h} \|_{L^6} \\
		&+ C \tau \| e^{n+1}_{\theta}\|_{L^2} \|    \rho^{n+1}_h \|_{L^\infty} \| \nabla \u_h^n\|_{L^{3}} \| e^{n+1}_{\theta,h} \|_{L^6} \\
		\leq & C \tau h^4+ \frac{\mu \tau}{30} \|\nabla e^{n+1}_{\theta,h} \|_{L^2}^2,
	\end{split}
\end{align*}
where we use (\ref{App2}),  (\ref{nsre-4}), (\ref{eq9}). 

\begin{itemize}
	\item Estimate of $2\tau (Y_{20},e^{n+1}_{\theta,h} )$
\end{itemize}
\begin{align*}
	\begin{split}
		2\tau(Y_{20},e^{n+1}_{\theta,h}) =&2\tau (R^{n+1}_{\theta},e^{n+1}_{\theta,h} ) \\
		\leq &C \tau \| R^{n+1}_{\theta}\|^2_{L^2} + C  \tau \| \sigma^{n+1}_h e^{n+1}_{\theta,h}\|_{L^2}^2,
	\end{split}
\end{align*}

Substituting these estimates into (\ref{errorequation3}), we have
\begin{align*}
	\begin{split}
		&	\| \sigma^{n+1}_h e^{n+1}_{\theta,h} \|^2_{L^2} - \| \sigma^n_h e^n_{\theta,h} \|^2_{L^2} + 2 \mu \tau \|\nabla e^{n+1}_{\theta,h}\|^2_{L^2}\\
		\leq & C \tau h^4 + C\tau ( \|\sigma^{n+1}_h e^{n+1}_{\theta,h} \| ^2_{L^2}+\|\sigma^{n}_h e^{n}_{\theta,h} \| ^2_{L^2} ) +C (\tau \| R^{n+1}_{\theta}\|^2_{L^2} +\| R^{n+1}_{\theta}\|^2_{L^2})+ C \tau h^2 \|D_{\tau} e^{n+1}_{\sigma,h} \|^2_{L^2} \\
		&+ C \tau   h^4 ( \| D_{\tau} \u^{n+1}\|_{H^2}^2 + \|D_{\tau} \sigma^{n+1}\|_{H^2}^2  ) +C \tau  \|  \u^n- \P_{0h}\u_h^n\|_{L^2}^2+ C \tau  \|e^{n+1}_{\sigma,h}\|^2_{L^2} .
	\end{split}
\end{align*}
taking the sum and using (\ref{eq18}), (\ref{eq7}), (\ref{eq1}), (\ref{eq21}), (\ref{nsre-1}), we can derive
\begin{align}
		\|\sigma^{n+1}_h e^{n+1}_{\theta,h} \|^2_{L^2} + \tau \sum_{i=1}^{n} \| \nabla e^{n+1}_{\theta,h} \|^2_{L^2}
	\leq &  C (\tau^2+h^4) + C\tau \|\sigma_h^{n+1} e^{n+1}_{\theta,h} \|^2_{L^2}+C \tau  \|  \u^n- \P_{0h}\u_h^n\|_{L^2}^2 \notag\\
	\leq &C (\tau^2+h^4) + C\tau \|\sigma_h^{n+1} e^{n+1}_{\theta,h} \|^2_{L^2} +C \tau \| e^{n}_{\u,h}\|^2_{L^2}\\
	\leq&C (\tau^2+h^4) + C\tau \|\sigma_h^{n+1} e^{n+1}_{\theta,h} \|^2_{L^2} .\notag
\end{align}

Thus by applying the discrete Gronwall inequality in Lemma \ref{GronwallLemma}, we can derive 
\begin{align}
	\|\sigma^{n+1}_h e^{n+1}_{\theta,h} \|^2_{L^2} + \tau \sum_{i=1}^{n} \| \nabla e^{n+1}_{\theta,h} \|^2_{L^2 }	\leq  C (\tau^2+h^4),
\end{align}

By (\ref{eq8}), there exists some $C_2$ > 0 being independent of $\tau, h$ and such that
\begin{align}\label{nsre-7}
	\| e^{n+1}_{\theta,h} \|^2_{L^2} + \tau \sum_{i=1}^{n} \| \nabla e^{n+1}_{\theta,h} \|^2_{L^2 }	\leq  C (\tau^2+h^4),
\end{align}

By (\ref{nsre-8}) and (\ref{nsre-7}),
the proof of Lemma \ref{lemma3} is completed.
\subsection{Main result}
Finally we present the main result on error estimates of the density $\rho$, the velocity $\u$ and the temperature  $\theta$ in $L^2$-norm.

\begin{theorem}
	\label{theorem3}
	Under the assumptions ({\bf A1}), ({\bf A2}) and the time step condition $\tau \leq C h^2$, , if $h$ and $\tau$ are sufficiently small, then one has
	\begin{align}
		\label{mainerror2}
		\| \sigma^{n+1} - \sigma^{n+1}_h \|_{L^2}+\| \rho^{n+1} - \rho^{n+1}_h \|_{L^2}+	\|\u(t_{n+1})-\u_h^{n+1}\|_{L^2}+	\| \theta^{n+1} - \theta^{n+1}_h \|_{L^2} \leq C (\tau+h^2)
	\end{align}for any $0\leq n\leq N-1$.
\end{theorem}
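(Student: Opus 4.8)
The plan is to obtain \eqref{mainerror2} by assembling, for each of the four unknowns, the projection-error bounds of Section~\ref{finiteelementapproximation} with the discrete-error bounds already proved in Lemmas~\ref{lemma1} and~\ref{lemma3}. I would start from the splittings introduced before \eqref{App2}, e.g. $\u^{n+1}-\u^{n+1}_h=-\e^{n+1}_{\textbf{u}}+\e^{n+1}_{\textbf{u},h}$ and $\sigma^{n+1}-\sigma^{n+1}_h=-e^{n+1}_\sigma+e^{n+1}_{\sigma,h}$, and bound the two pieces of each splitting by the triangle inequality. The projection pieces are handled directly by the approximation estimates: $\|\e^{n+1}_{\textbf{u}}\|_{L^2}\le Ch^2$ from \eqref{u_projection_error}, $\|e^{n+1}_\sigma\|_{L^2}\le Ch^3$ from \eqref{App2}, and $\|e^{n+1}_\theta\|_{L^2}\le Ch^2$ from \eqref{eq22}, all of which are $O(h^2)$ and thus absorbed into $C(\tau+h^2)$.

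For the discrete pieces I would take square roots in the energy bounds of Lemma~\ref{lemma3}, using $\sqrt{a^2+b^2}\le a+b$: \eqref{u_n_error} gives $\|\e^{n+1}_{\textbf{u},h}\|_{L^2}\le C(\tau+h^2)$ and \eqref{theta_n_error} gives $\|e^{n+1}_{\theta,h}\|_{L^2}\le C(\tau+h^2)$. To upgrade the density error from the mere $O(h^2)$ of \eqref{eq10} to the sharp $O(\tau+h^2)$, I would feed the velocity bound just obtained into Lemma~\ref{lemma1}: by \eqref{eq21} and \eqref{eq6} one has $\|\u^n-\P_{0h}\u^n_h\|_{L^2}\le Ch^2+C\|\e^n_{\textbf{u},h}\|_{L^2}\le C(\tau+h^2)$, so the trailing sum in \eqref{eq1} is $O(\tau^2+h^4)$ and hence $\|e^{n+1}_{\sigma,h}\|_{L^2}\le C(\tau+h^2)$. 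Adding the projection and discrete pieces then yields the $\sigma$, $\u$ and $\theta$ contributions to \eqref{mainerror2}.

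The density $\rho=\sigma^2$ is treated exactly as in \eqref{eq12}: I would factor $\rho^{n+1}-\rho^{n+1}_h=(\sigma^{n+1}+\sigma^{n+1}_h)(\sigma^{n+1}-\sigma^{n+1}_h)$ and use the uniform lower/upper bound \eqref{eq8} on $\sigma^{n+1}_h$ together with the $H^3$ regularity of $\sigma^{n+1}$ to obtain $\|\sigma^{n+1}+\sigma^{n+1}_h\|_{L^\infty}\le C$, so that $\|\rho^{n+1}-\rho^{n+1}_h\|_{L^2}\le C\|\sigma^{n+1}-\sigma^{n+1}_h\|_{L^2}\le C(\tau+h^2)$. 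Summing the four contributions completes the proof.

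The genuine content, however, lies not in this final assembly but in the self-consistency of the induction behind Lemma~\ref{lemma3}. The bounds \eqref{u_n_error}--\eqref{theta_n_error} are derived under the hypotheses \eqref{nsre-1}--\eqref{nsre-5}, and to close the induction the constant $C_0$ must be fixed so that the recovered constant $C_1$ in \eqref{nsre-8} satisfies $C_1\le C_0$. I expect the main obstacle to be exactly this bootstrap: the $L^\infty$ control of $\u^n_h$, $\theta^n_h$ and $\sigma^{n+1}_h$ through \eqref{eq5}, \eqref{eq8} and \eqref{eq15}, which is what allows every nonlinear term to be absorbed into the viscous and diffusive terms, hinges on the time-step restriction $\tau\le Ch^2$ and on $h$ being small enough that factors such as $(1+C_0)h^{1/2}$ remain below one uniformly in $n$. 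Once these smallness conditions are secured for all $0\le n\le N-1$, the theorem follows as a direct triangle-inequality consequence.
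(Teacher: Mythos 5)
Your proposal is correct and follows essentially the same route as the paper: a triangle inequality over the projection/discrete splittings, with the projection parts bounded by \eqref{u_projection_error}, \eqref{App2}, \eqref{eq22}, the discrete velocity and temperature parts by Lemma \ref{lemma3}, the density part by feeding $\|\u^n-\P_{0h}\u^n_h\|_{L^2}\le Ch^2+C\|\e^n_{\u,h}\|_{L^2}$ back into Lemma \ref{lemma1}, and $\rho-\rho_h$ handled via the factorization $(\sigma+\sigma_h)(\sigma-\sigma_h)$ and the uniform bound \eqref{eq8}. Your closing remark about closing the induction constant $C_0$ in Lemma \ref{lemma3} concerns that lemma's proof rather than this theorem's, but it correctly identifies where the real work lies.
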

\begin{proof}

By Lemma \ref{lemma1}, (\ref{App2}),(\ref{eq21}), (\ref{nsre-1})we have 

\begin{align}
 	\| \sigma^{n+1} - \sigma^{n+1}_h \|^2_{L^2} \leq  &C ( \| e^{n+1}_\sigma\|^2_{L^2} +\| e^{n+1}_{\sigma,h}\|^2_{L^2}   ) \nonumber\\
 	\leq &C (\tau^2+h^4) + C\tau\sum_{i=1}^n\| \u^n-\P_{0h}\u_h^n\|^2_{L^2}  \nonumber\\
 	\leq & C (\tau^2+h^4) + C \tau \sum_{i=1}^n \| e^{n}_{\u,h}\|^2_{L^2}   \nonumber   \\
 	\leq &C  (\tau^2+h^4).
\end{align}

Thus by  (\ref{eq8}) we have 
\begin{align}
	\| \rho^{n+1} - \rho^{n+1}_h \|_{L^2} \leq  \| \sigma^{n+1} +\sigma^{n+1}_h \|_{L^\infty} \| \sigma^{n+1} -\sigma^{n+1}_h\|_{L^2} \leq C (\tau+h^2).
\end{align}

In terms of (\ref{u_projection_error}), (\ref{eq8}) and  (\ref{u_n_error}) we have
\begin{align} 
	 	\| \u^{n+1} - \u^{n+1}_h \|_{L^2} \leq  &C ( \| \e^{n+1}_\u\|^2_{L^2} +\| \e^{n+1}_{\u,h}\|^2_{L^2}   )  \leq C (\tau+h^2).
\end{align}

In terms of (\ref{projection_error_theta}), (\ref{eq8}) and  (\ref{theta_n_error}) we have
\begin{align}
	\| \theta^{n+1} - \theta^{n+1}_h \|_{L^2} \leq  &C ( \| e^{n+1}_\theta\|^2_{L^2} +\| e^{n+1}_{\theta,h}\|^2_{L^2}   )  \leq C (\tau+h^2).
\end{align}

The proof of Theorem \ref{theorem3} is completed.
\end{proof}


\section{Numerical results}

For simplicity, we consider the  time dependent natural convection problem with variable density (\ref{newns1}) - (\ref{eq17}) in the convex domain and an artificial function $g_2$ is add in the right hand side of (\ref{newns1}), i.e. we solve the following coupled system:

\begin{eqnarray}
	\sigma_t+  \nabla\cdot ( \sigma \u) &=&g_2,\\
	\sigma(\sigma \u)_t-\mu\Delta \u+\rho(\u\cdot \nabla )\u+\frac{1}{2}\u\nabla\cdot (\rho\u)+\nabla p&=&\f,\\
	\nabla\cdot \u&=&0,\\
	\sigma (\sigma \theta)_t -\kappa \Delta \theta+\rho (\textbf{u}\cdot \nabla) \theta +\frac{1}{2} \theta \nabla \cdot (\rho \textbf{u}) &=&g,
\end{eqnarray}
in order to choose the approximate functions $f,g$ and $g_2$, we consider a known analytical solution  in $\Omega\times [0, T]$, where $\Omega=[0,1]^d$, $d=2,3$.

\begin{eqnarray*}
        \sigma(x,y,t) &=& 2+x(1-x)cos(sin(t))+y(1-y)sin(sin(t)),\\
        \u(x,y,t) &=& (t^3y^2(1-y), \ t^3x^2(1-x))^{ T},\\
        p(x,y,t) &=& tx+y-(t+1)/2.\\
        \theta(x,y,t)&=& t^3y^2(1-y)+t^3x^2(1-x),
       \end{eqnarray*}
       in two-dimensional case and 
       \begin{eqnarray*}
       	\sigma(x,y,t) &=&2+x(1-x)cos(sin(t))+y(1-y)sin(sin(t))+z(1-z)sin(sin(t)),\\
       	\u(x,y,t) &=& (t^3y^2(1-y), \ t^3z^2(1-z)  ,\  t^3x^2(1-x) )^{ T},\\
       	p(x,y,t) &=&(2x-1)(2y-1)(2z-1)exp(-t).\\
       	\theta(x,y,t)&=& t^3y^2(1-y)+t^3x^2(1-x) +t^3z^2(1-z),
       \end{eqnarray*}
       in three-dimensional case.
In addition, we take the viscosity coefficient $\mu=0.1, \kappa=0.1, \gamma_1=0.1, \gamma_2=0.1$ and the final time $T=1$. All programs are implemented by using the free finite element software FreeFem++ \cite{hechet2012}.

Numerical results are showed by taking different grid size $\frac{1}{4}, \frac{1}{8},\frac{1}{16},\dots, \frac{1}{128}$ the meshes are given from the uniform triangles meshes.
For the two-dimensional problem, we give the $L^2$-norm error and convergent rate for $\rho, \u, \theta, p$ in Table \ref{label1}, \ref{label2}, when we set $\tau=h$, we can clearly see the first-order convergent  rate in Table \ref{label1}, when $\tau=h^2$, the second-order convergent rate are showed in Table \ref{label2}, especially, we can find that the second-order convergent rate for $\rho$ when $\tau =h^3$ in Table \ref{label5}, because finite element solutions of the
scalar hyperbolic equation (\ref{ns1}) have lower-order convergence rates
\cite{cpietro2012}.
In addition, we present the errors and convergent rate for $\rho, \u, \theta, p$ for three-dimensional problem in Table \ref{label3}, \ref{label4}.  
The numerical solutions for  the velocity, pressure, density, temperature  at  $t = 0, 0.2, 0.4, 0.6, 0.8, 1.0$ are presented in Figure \ref{velocity}, \ref{pressure}, \ref{density}, \ref{temperature}.  
In conclusion, all
numerical results and tests have well verified the effectiveness and accuracy of the proposed algorithm.

\begin{table}[htbp] 
	 \centering  
	 \caption{$L^2$-errors and convergence rates for 2D problem }    
	 	 \setlength{\tabcolsep}{1.2mm}{
	 \begin{tabular}{ccccccccc}    
	 	        \hline\hline
	 	\textbf{$\tau=h$} &$\| \rho(T)-\rho^N_h\|_{L^2}$ & rate & $\|\u(T)-\u^N_h\|_{L^2}$ & rate &$\|\theta(T)-\theta^N_h\|_{L^2}$& rate & $\|p(T)-p^N_h\|_{L^2}$ & rate \\   
	 	     \hline
	 	  1/4  & 0.116211 &       & 0.0164303 &       & 0.034649 &       & 0.358521 &  \\   
	 	    1/8  & 0.0596104 & 0.96  & 0.00586277 & 1.49  & 0.0158444 & 1.13  & 0.158757 & 1.18  \\     
	 	     1/16 & 0.0301046 & 0.99  & 0.00261228 & 1.17  & 0.00752981 & 1.07  & 0.0743927 & 1.09  \\    
	 	       1/32 & 0.0151193 & 0.99  & 0.00126633 & 1.04  & 0.00366573 & 1.04  & 0.0360589 & 1.04  \\     
	 	        1/64 & 0.00757439 & 1.00  & 0.000628496 & 1.01  & 0.00180799 & 1.02  & 0.017761 & 1.02  \\    
	 	           1/128 & 0.00379062 & 1.00  & 0.00031377 & 1.00  & 0.000897767 & 1.01  & 0.00881533 & 1.01  \\  
	 	                   \hline\hline
	 	             \end{tabular}}
	 	              \label{label1}%
	 	              \end{table}%

\begin{table}[htbp] 
	 \centering  
	 \caption{$L^2$-errors and convergence rates for 2D problem}   
	  	 \setlength{\tabcolsep}{1.2mm}{
	  \begin{tabular}{ccccccccc}   
	  	        \hline\hline
	 	\textbf{$\tau=h^2$} &$\| \rho(T)-\rho^N_h\|_{L^2}$ & rate & $\|\u(T)-\u^N_h\|_{L^2}$ & rate &$\|\theta(T)-\theta^N_h\|_{L^2}$& rate & $\|p(T)-p^N_h\|_{L^2}$ & rate \\ 
	  	 	 	     \hline
	  	   1/2  & 0.115364 &       & 0.0534366 &       & 0.035 &       & 0.527472 &  \\    
	  	    1/4  & 0.0324759 & 1.83  & 0.0155145 & 1.78  & 0.00853127 & 2.04  & 0.143624 & 1.88  \\    
	  	     1/8  & 0.00845427 & 1.94  & 0.00400495 & 1.95  & 0.00192685 & 2.15  & 0.0351823 & 2.03  \\    
	  	       1/16 & 0.00214018 & 1.98  & 0.00100696 & 1.99  & 0.000468026 & 2.04  & 0.00869251 & 2.02  \\  
	  	           1/32 & 0.00053738 & 1.99  & 0.000251983 & 2.00  & 0.00011619 & 2.01  & 0.00216691 & 2.00  \\   
	  	              1/64 & 0.00013452 & 2.00  & 6.30E-05 & 2.00  & 2.90E-05 & 2.00  & 0.000542729 & 2.00  \\  
	  	                           \hline\hline
	  	                \end{tabular}}
	  	                 \label{label2}%
	  	                 \end{table}%

\begin{table}[htbp]  
	\centering  
	\caption{$L^2$-errors and convergence rates for 2D problem}  
	\setlength{\tabcolsep}{1.2mm}{
		\begin{tabular}{ccccccccc}        
			\hline\hline  
			\textbf{$\tau=h^3$} &$\| \rho(T)-\rho^N_h\|_{L^2}$ & rate & $\|\u(T)-\u^N_h\|_{L^2}$ & rate &$\|\theta(T)-\theta^N_h\|_{L^2}$& rate & $\|p(T)-p^N_h\|_{L^2}$ & rate \\ 
			\hline 
			1/4  & 0.0124115 &       & 0.015454 &       & 0.00463455 &       & 0.0944332 &  \\     1/8  & 0.00263021 & 2.24  & 0.00396712 & 1.96  & 0.000876496 & 2.40  & 0.0215494 & 2.13  \\      
			1/16 & 0.00060222 & 2.13  & 0.000995465 & 1.99  & 0.0002007 & 2.13  & 0.00511456 & 2.07  \\     
			1/32 & 0.00014445 & 2.06  & 0.000248985 & 2.00  & 4.93E-05 & 2.03  & 0.00125065 & 2.03  \\   
			\hline\hline
	\end{tabular}}
	\label{label5}%
\end{table}%

\begin{table}[htbp]  
	\centering  \caption{$L^2$-errors and convergence rates for 3D problem}    
		  	 \setlength{\tabcolsep}{1.2mm}{
	\begin{tabular}{rcccccccc}    
	 	        \hline\hline
	 	\textbf{$\tau=h$} &$\| \rho(T)-\rho^N_h\|_{L^2}$ & rate & $\|\u(T)-\u^N_h\|_{L^2}$ & rate &$\|\theta(T)-\theta^N_h\|_{L^2}$& rate & $\|p(T)-p^N_h\|_{L^2}$ & rate \\  
		 	     \hline
		 1/4       & 0.181671 &       & 0.018811 &       & 0.064867 &       & 1.66915 &  \\   
		  1/8       & 0.0930417 & 0.97  & 0.00725636 & 1.37  & 0.0307612 & 1.08  & 0.6876 & 1.28  \\    
		  1/12       & 0.0624487 & 0.98  & 0.00478507 & 1.03  & 0.0199439 & 1.07  & 0.438388 & 1.11  \\   
		   1/16       & 0.0469825 & 0.99  & 0.00362164 & 0.97  & 0.0147326 & 1.05  & 0.327372 & 1.02  \\   
		    1/20       & 0.0376517 & 0.99  & 0.00293124 & 0.95  & 0.0116747 & 1.04  & 0.264926 & 0.95  \\   
		                \hline\hline
		      \end{tabular}}
		       \label{label3}%
		       \end{table}%
\begin{table}[htbp]  
	\centering  
	\caption{$L^2$-errors and convergence rates for 3D problem}    
		  	 \setlength{\tabcolsep}{1.2mm}{
	\begin{tabular}{rcccccccc}    
		       \hline\hline
	 	\textbf{$\tau=h^2$} &$\| \rho(T)-\rho^N_h\|_{L^2}$ & rate & $\|\u(T)-\u^N_h\|_{L^2}$ & rate &$\|\theta(T)-\theta^N_h\|_{L^2}$& rate & $\|p(T)-p^N_h\|_{L^2}$ & rate \\ 
		     \hline
		 1/4     & 0.0482913 &       & 0.0181513 &       & 0.0161436 &       & 0.578801 &  \\   
		  1/8     & 0.0128767 & 1.91  & 0.00458475 & 1.99  & 0.00374441 & 2.11  & 0.164685 & 1.81  \\   
		   1/12       & 0.00598432 & 1.89  & 0.00207245 & 1.96  & 0.00162733 & 2.06  & 0.0994962 & 1.24  \\    
		   1/16       & 0.00352684 & 1.84  & 0.00121412 & 1.86  & 0.000907367 & 2.03  & 0.0794992 & 0.78  \\   
		        \hline\hline
		    \end{tabular}}
		    \label{label4}%
		    \end{table}%

\begin{figure}[htbp] 
	
	\centering 

	\begin{minipage}{0.32\textwidth} 
		\centering  
		\includegraphics[width=\textwidth]{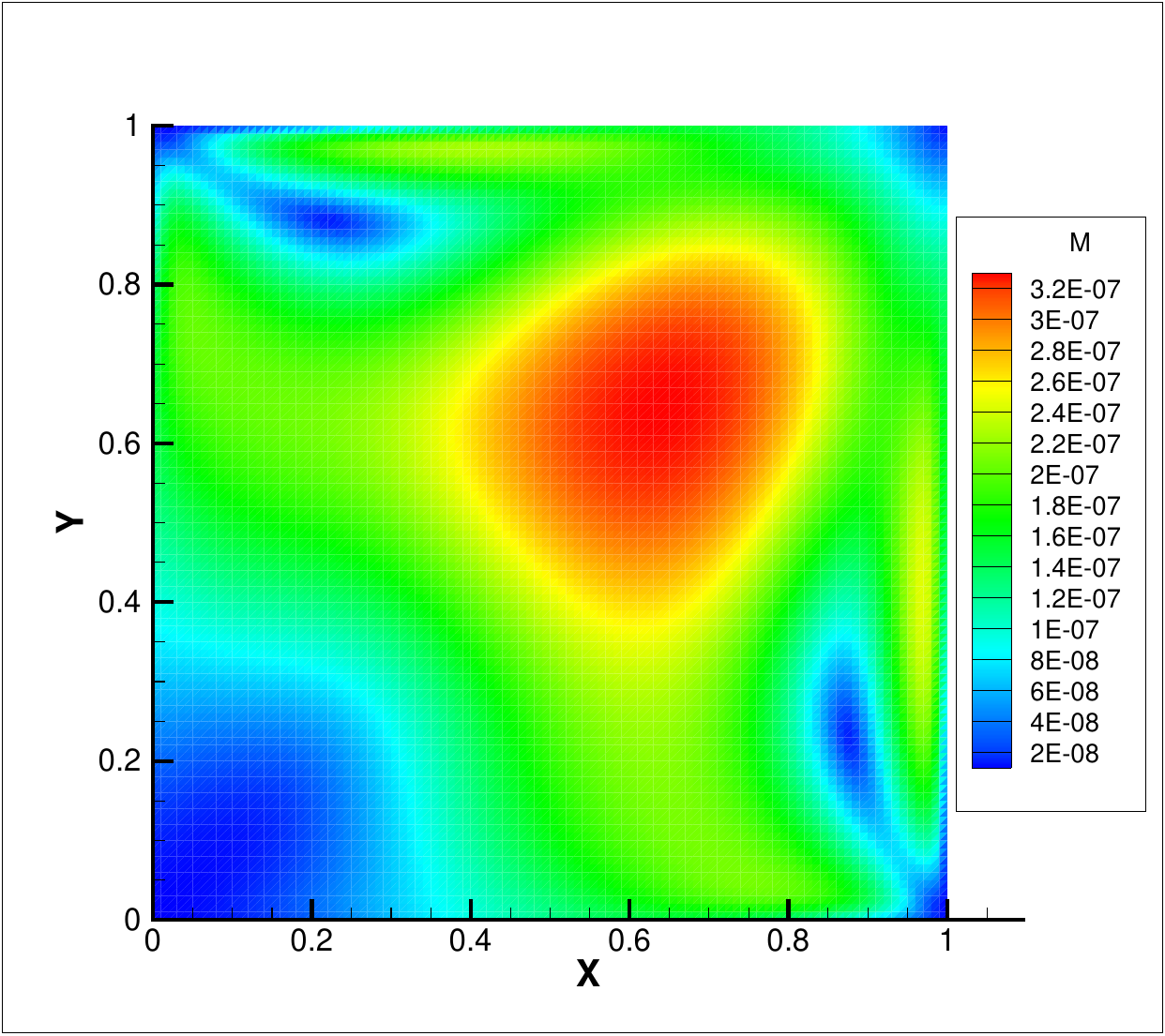}  
	\end{minipage}  
	\begin{minipage}{0.32\textwidth} 
		\centering  
		\includegraphics[width=\textwidth]{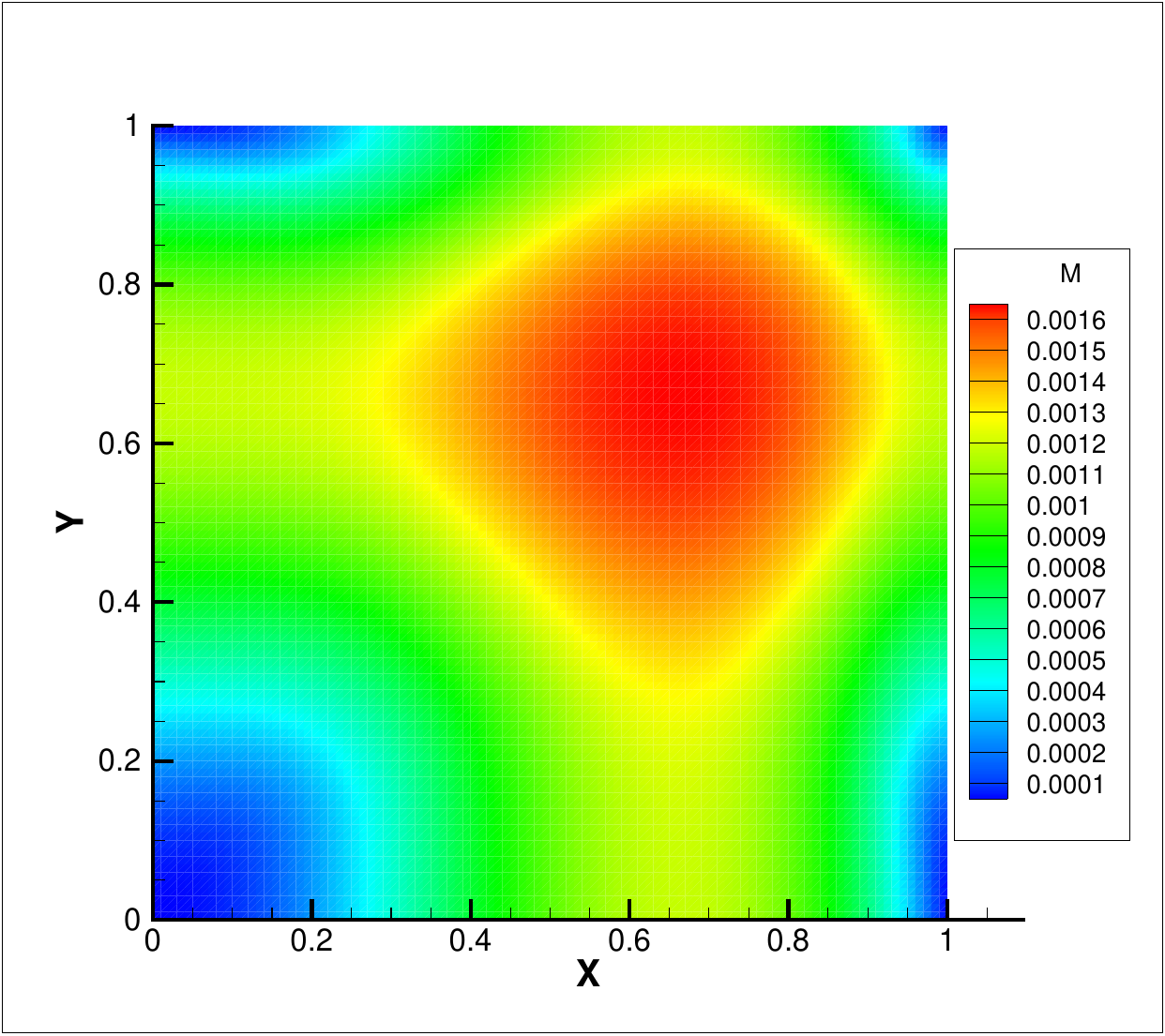} 
	\end{minipage}  
	\begin{minipage}{0.32\textwidth}  
		\centering  
		\includegraphics[width=\textwidth]{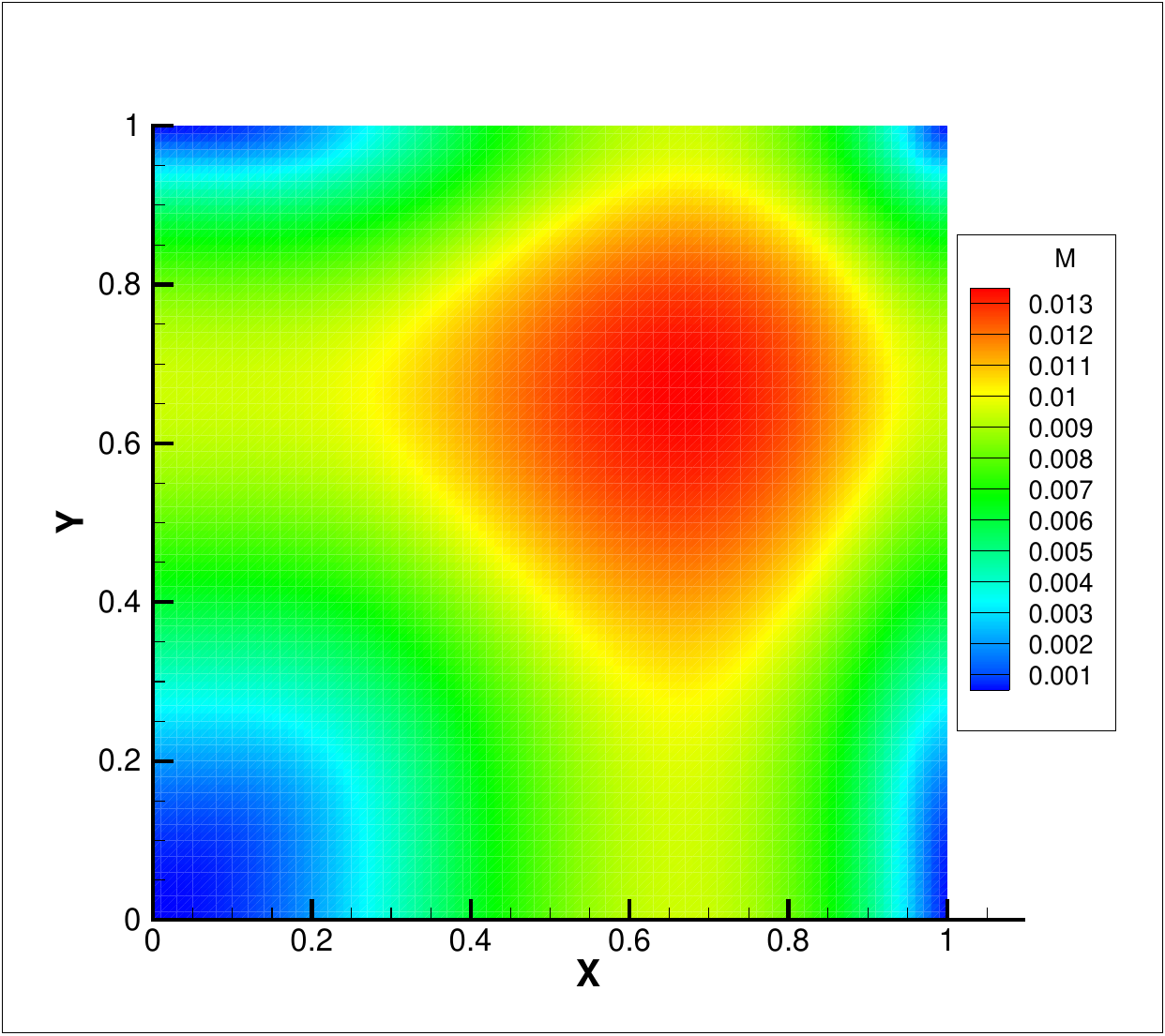}  
	\end{minipage}  
	
	\begin{minipage}{0.32\textwidth} 
		\centering  
		\includegraphics[width=\textwidth]{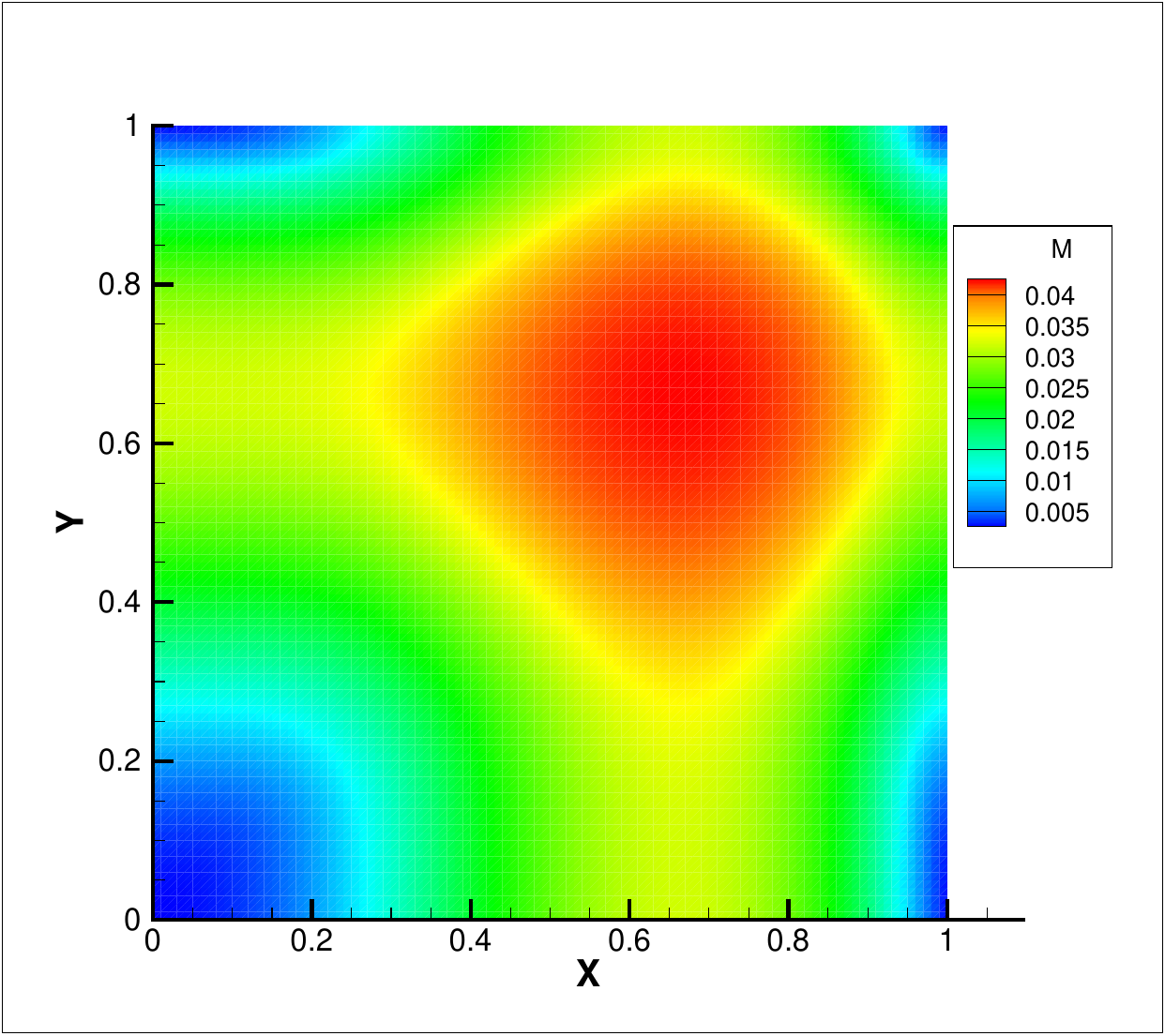}  
	\end{minipage}  
	\begin{minipage}{0.32\textwidth} 
		\centering  
		\includegraphics[width=\textwidth]{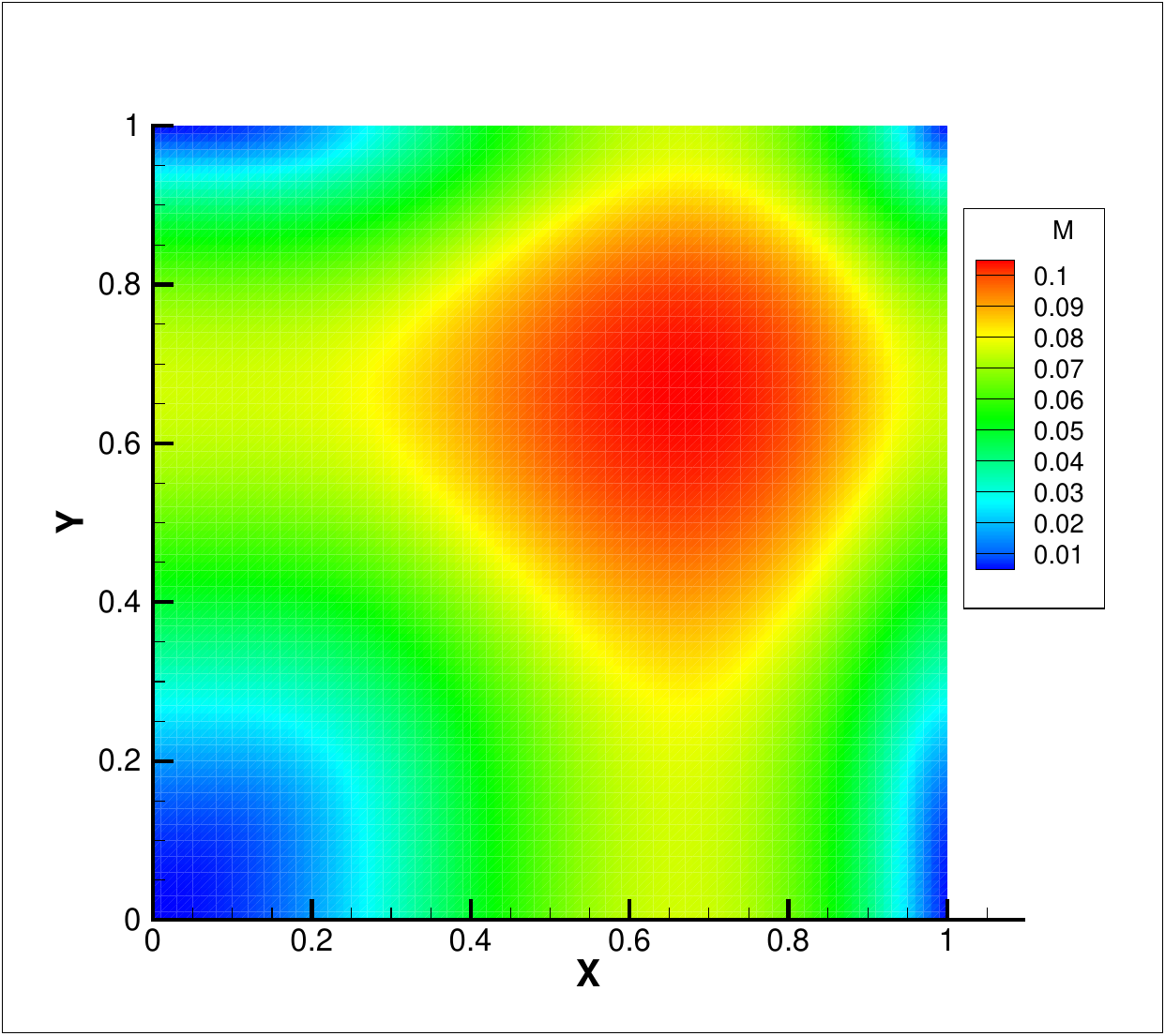} 
	\end{minipage}  
	\begin{minipage}{0.32\textwidth}  
		\centering  
		\includegraphics[width=\textwidth]{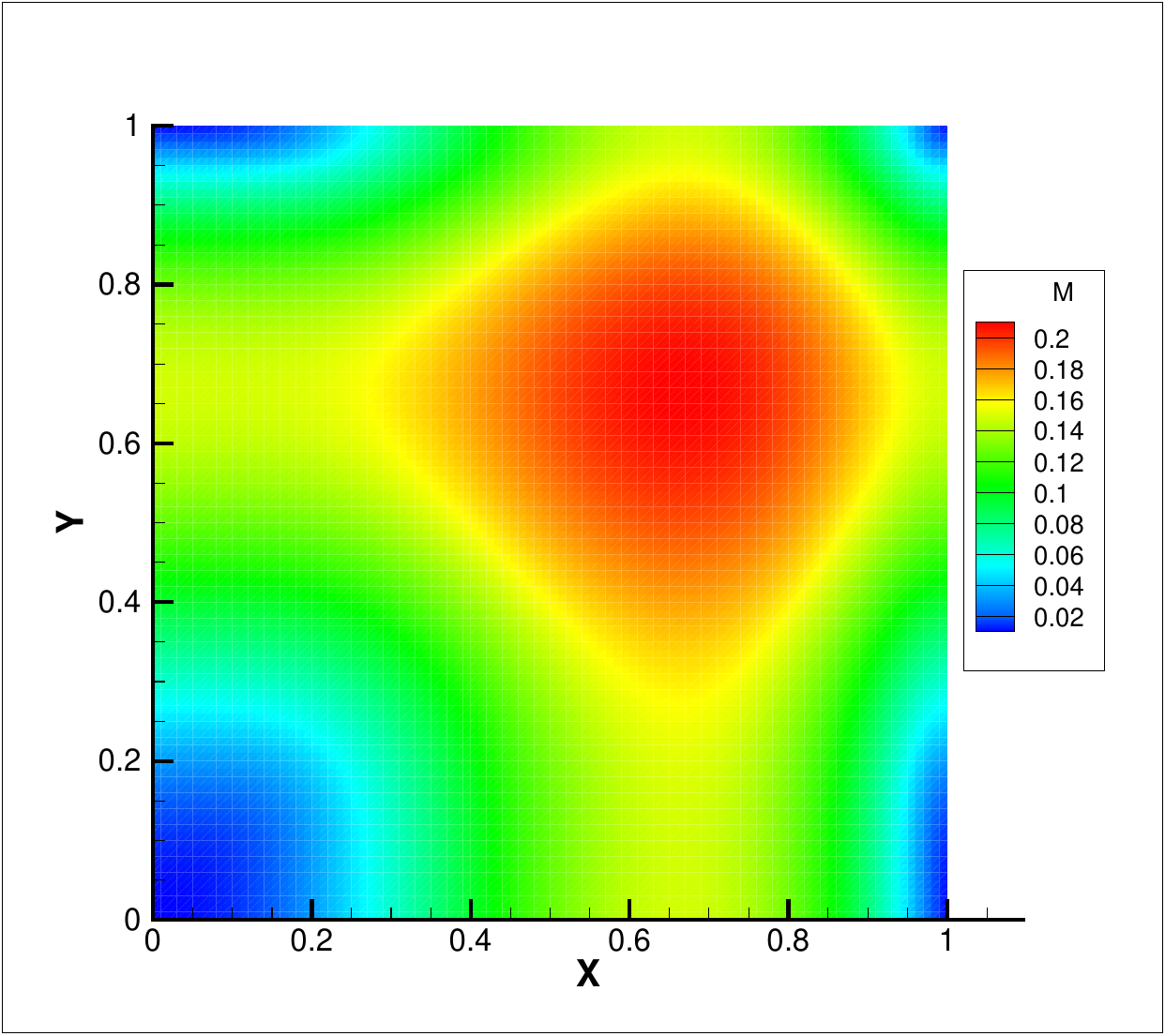}  
	\end{minipage}  
	\caption{Numerical solutions of velocity at times t = 0, 0.2, 0.4, 0.6, 0.8, 1.0.}  
	\label{velocity}  
\end{figure}

\begin{figure}[htbp] 
	
	\centering 

	\begin{minipage}{0.32\textwidth} 
		\centering  
		\includegraphics[width=\textwidth]{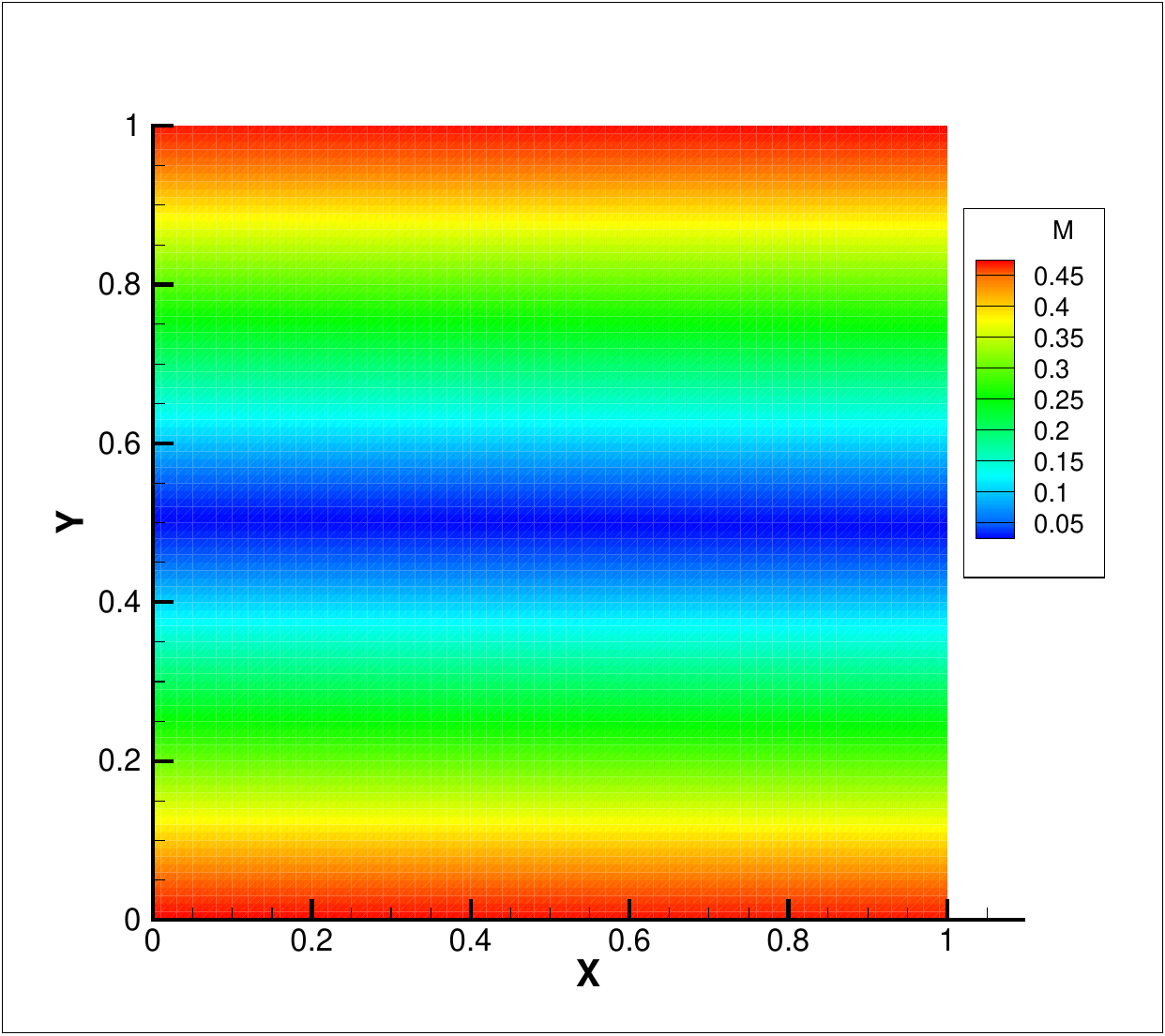}  
	\end{minipage}  
	\begin{minipage}{0.32\textwidth} 
		\centering  
		\includegraphics[width=\textwidth]{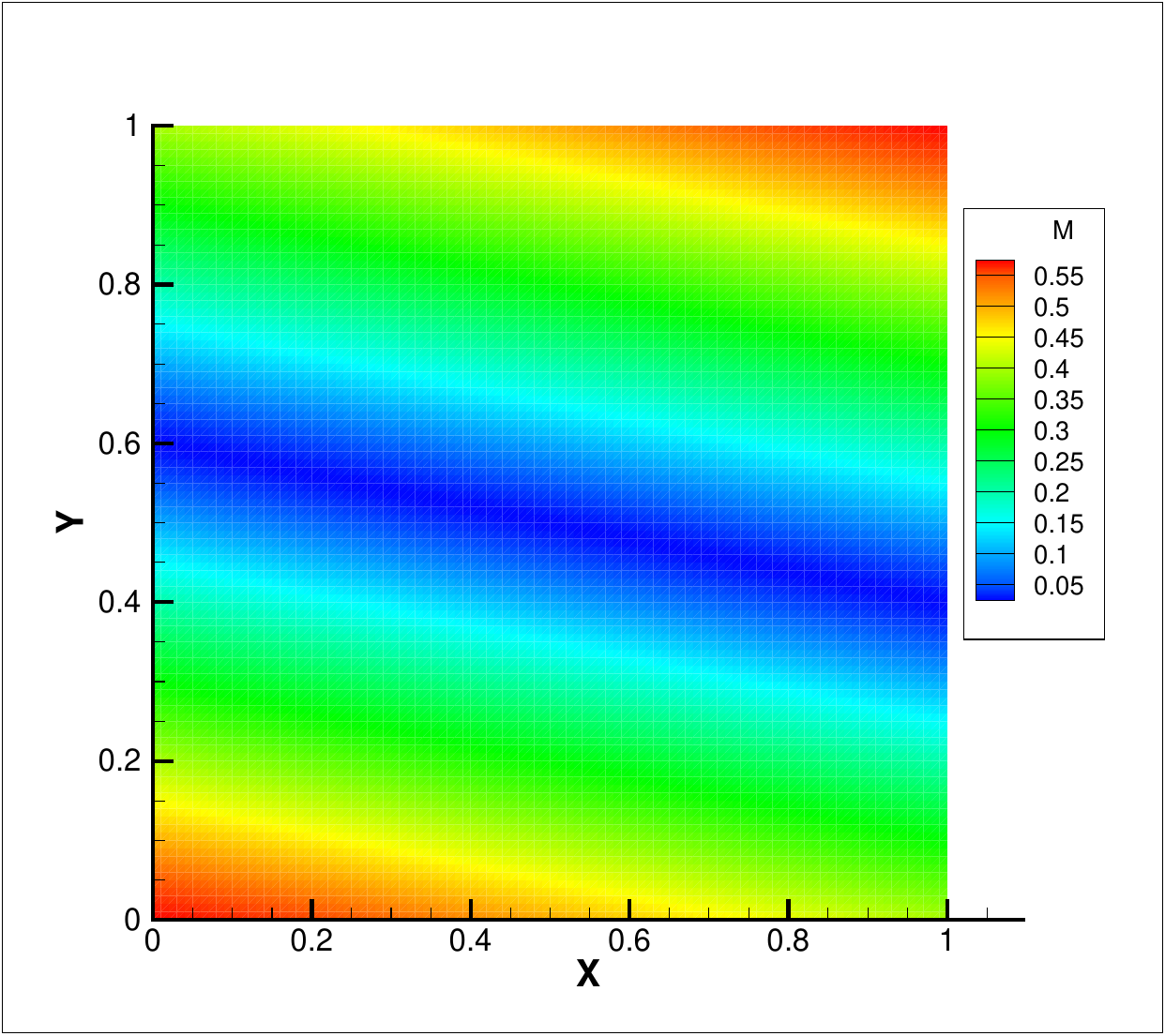} 
	\end{minipage}  
	\begin{minipage}{0.32\textwidth}  
		\centering  
		\includegraphics[width=\textwidth]{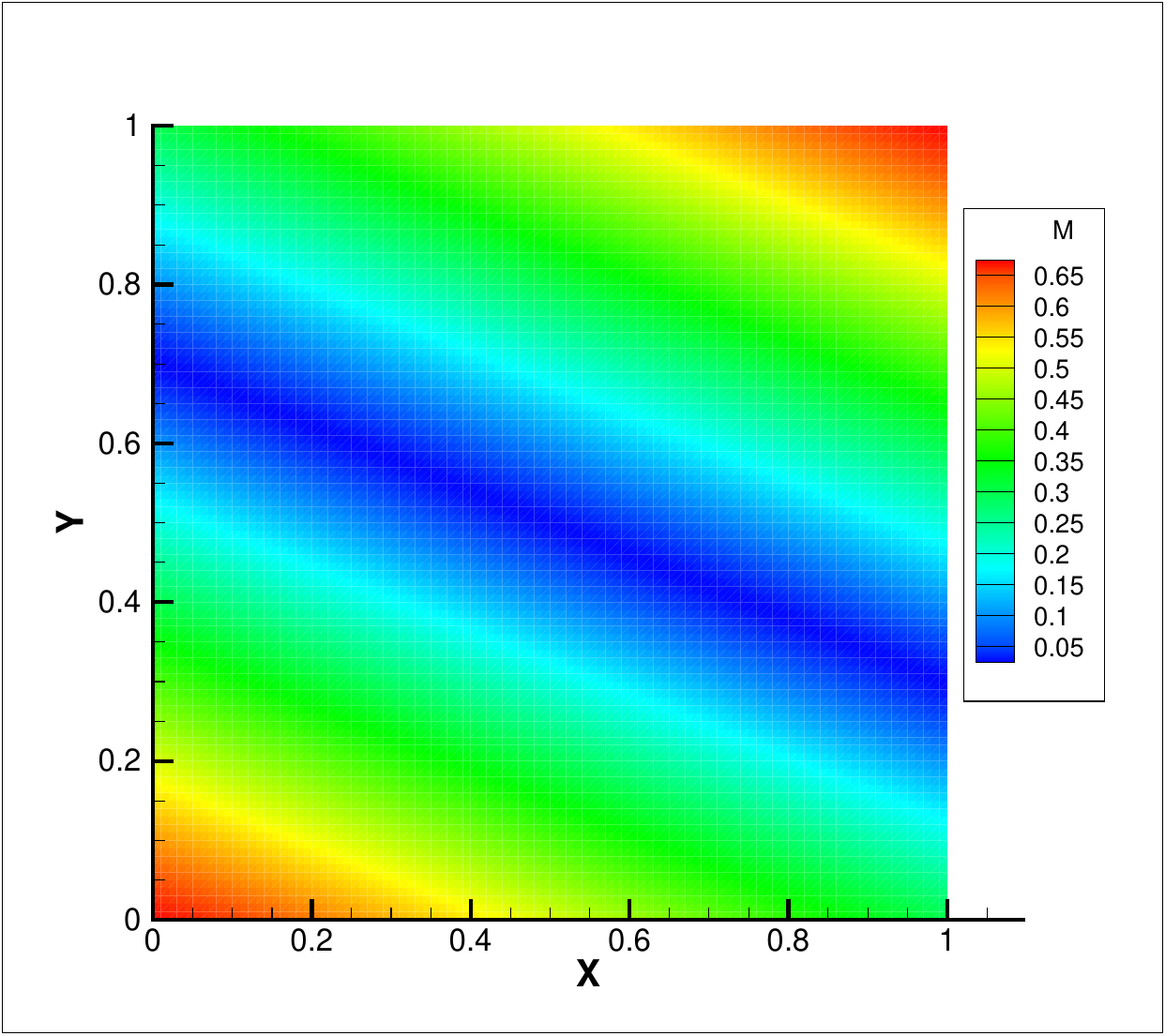}  
	\end{minipage}  
	
	\begin{minipage}{0.32\textwidth} 
		\centering  
		\includegraphics[width=\textwidth]{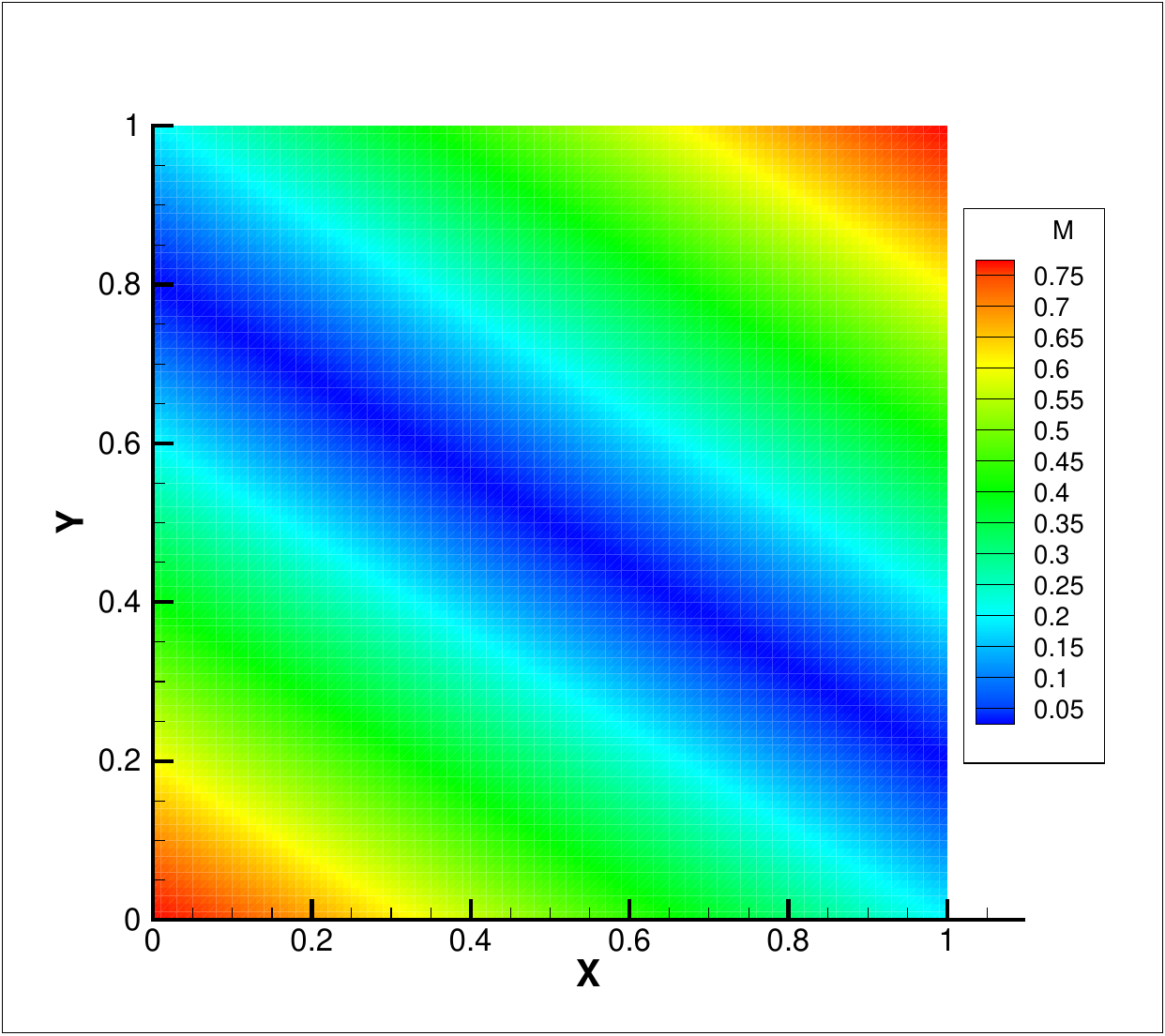}  
	\end{minipage}  
	\begin{minipage}{0.32\textwidth} 
		\centering  
		\includegraphics[width=\textwidth]{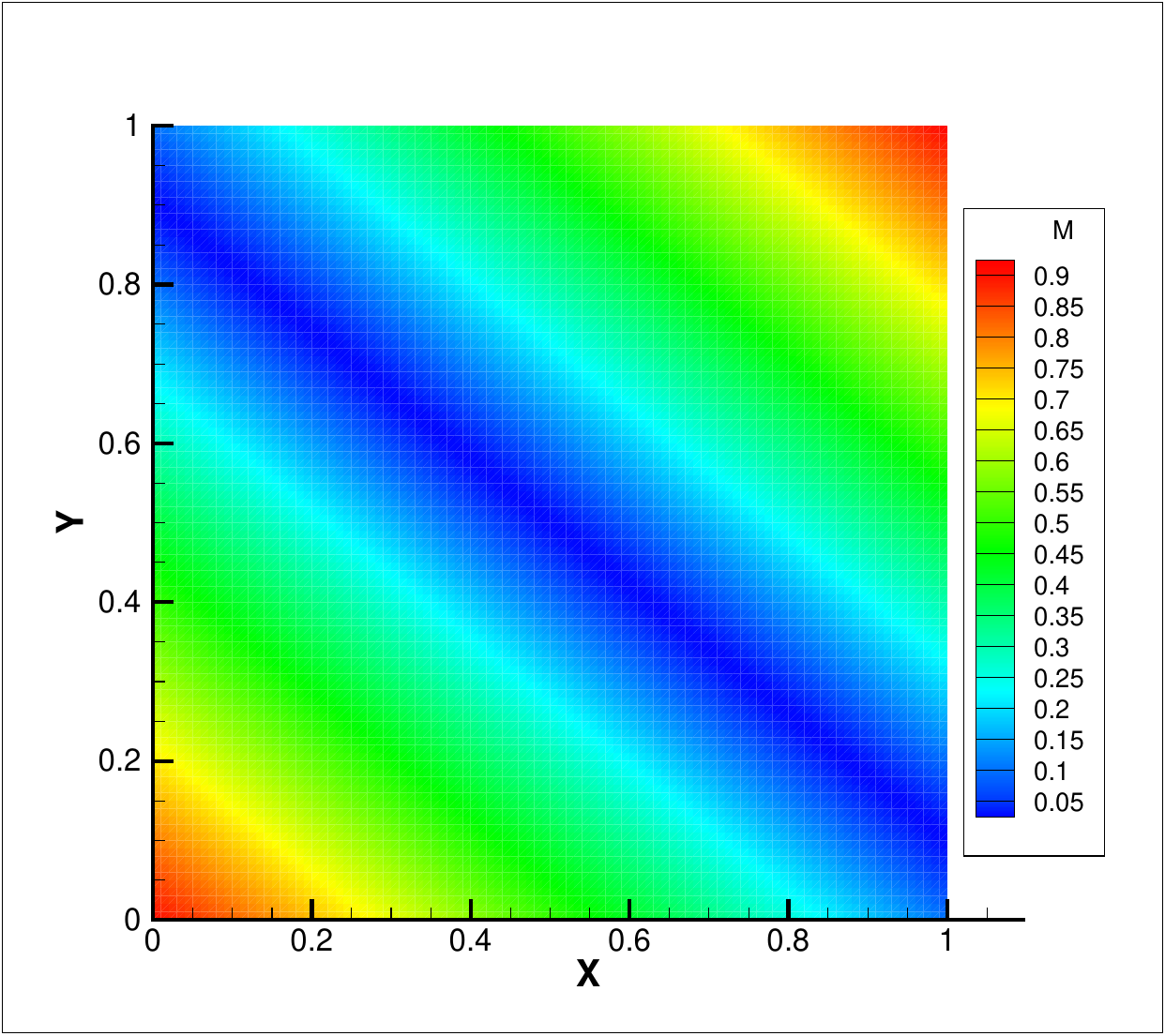} 
	\end{minipage}  
	\begin{minipage}{0.32\textwidth}  
		\centering  
		\includegraphics[width=\textwidth]{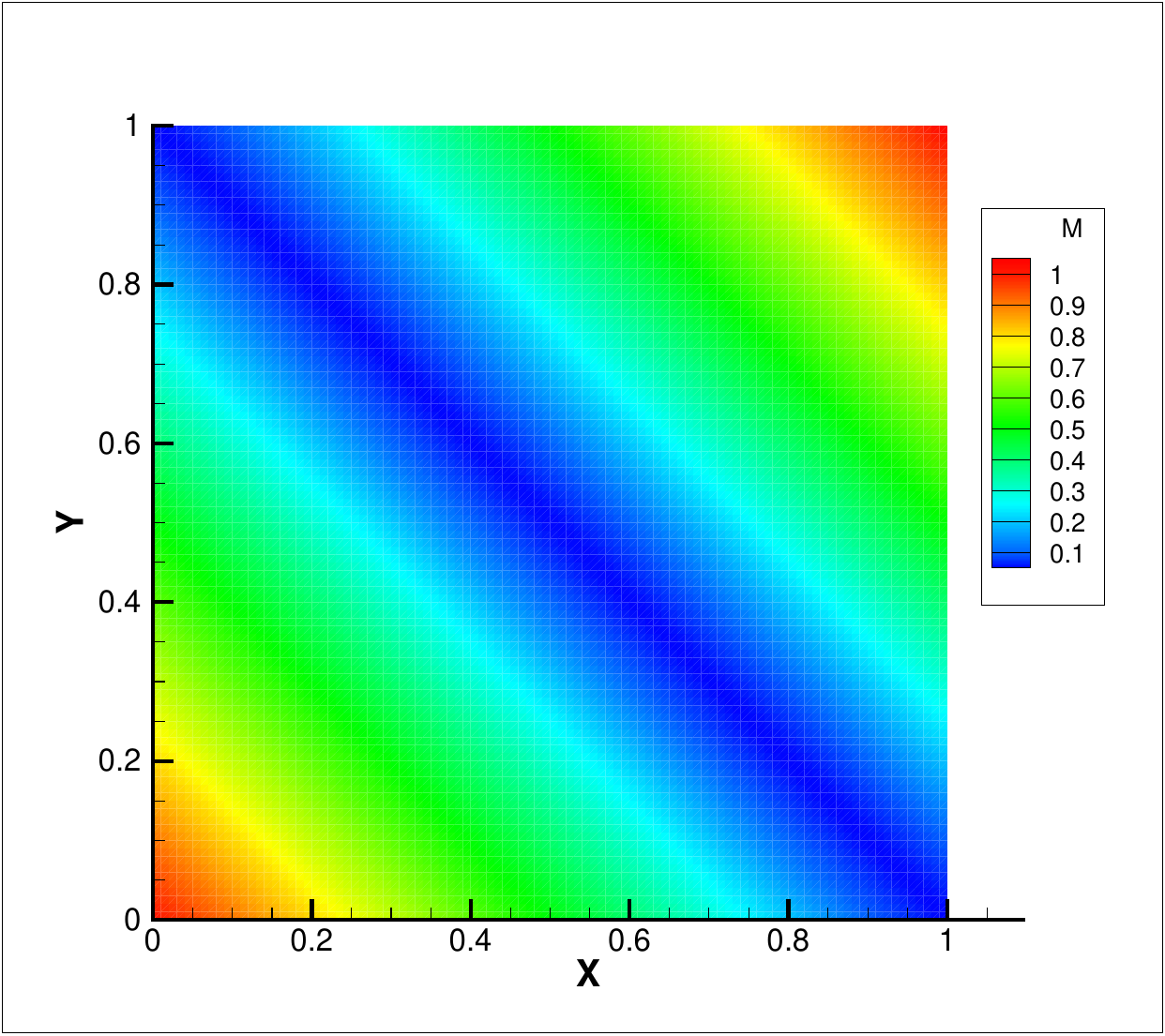}  
	\end{minipage}  
	\caption{Numerical solutions of pressure at times t = 0, 0.2, 0.4, 0.6, 0.8, 1.0.}  
	\label{pressure}  
\end{figure}

\begin{figure}[htbp] 
	
	\centering 

	\begin{minipage}{0.32\textwidth} 
		\centering  
		\includegraphics[width=\textwidth]{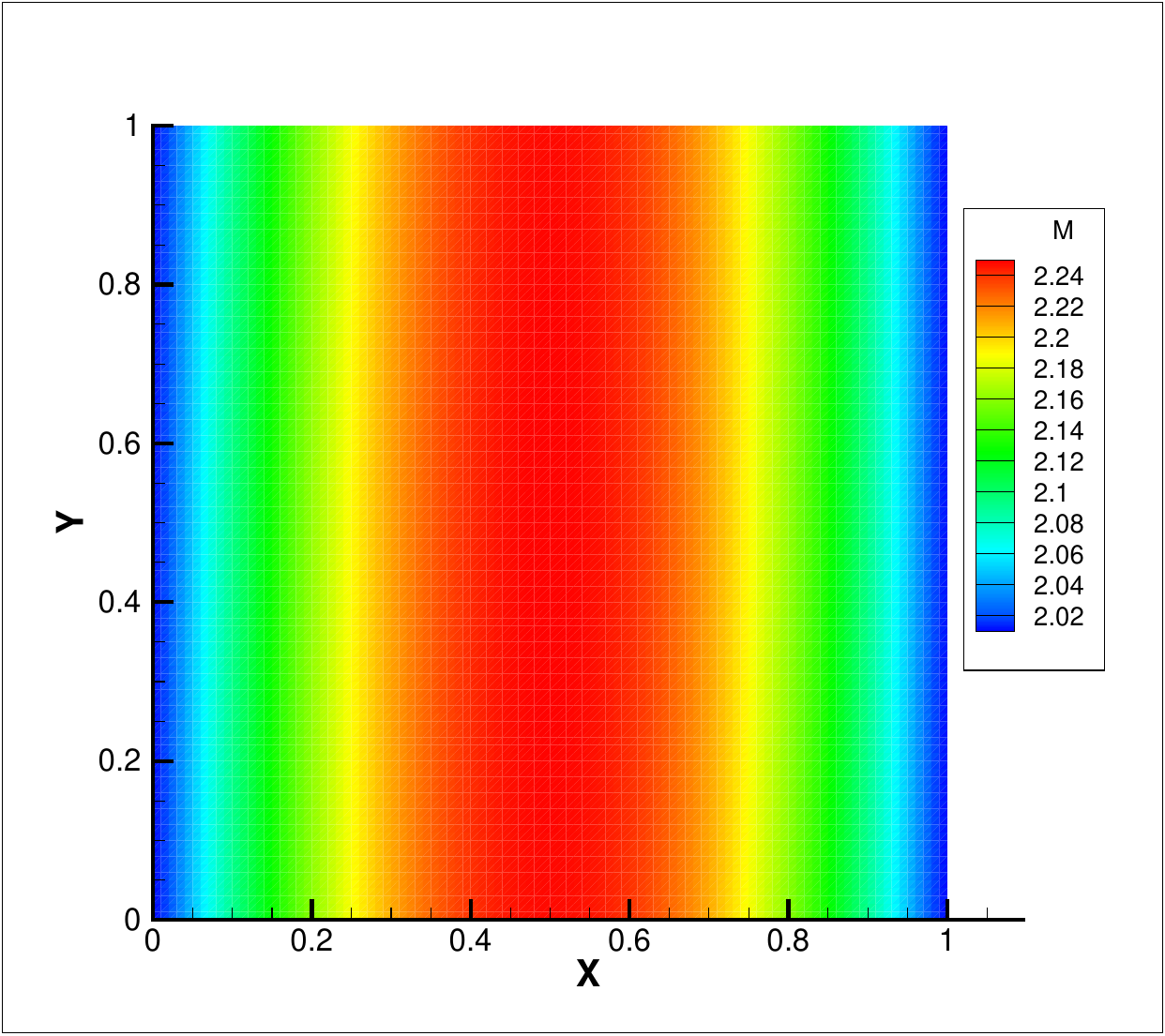}  
	\end{minipage}  
	\begin{minipage}{0.32\textwidth} 
		\centering  
		\includegraphics[width=\textwidth]{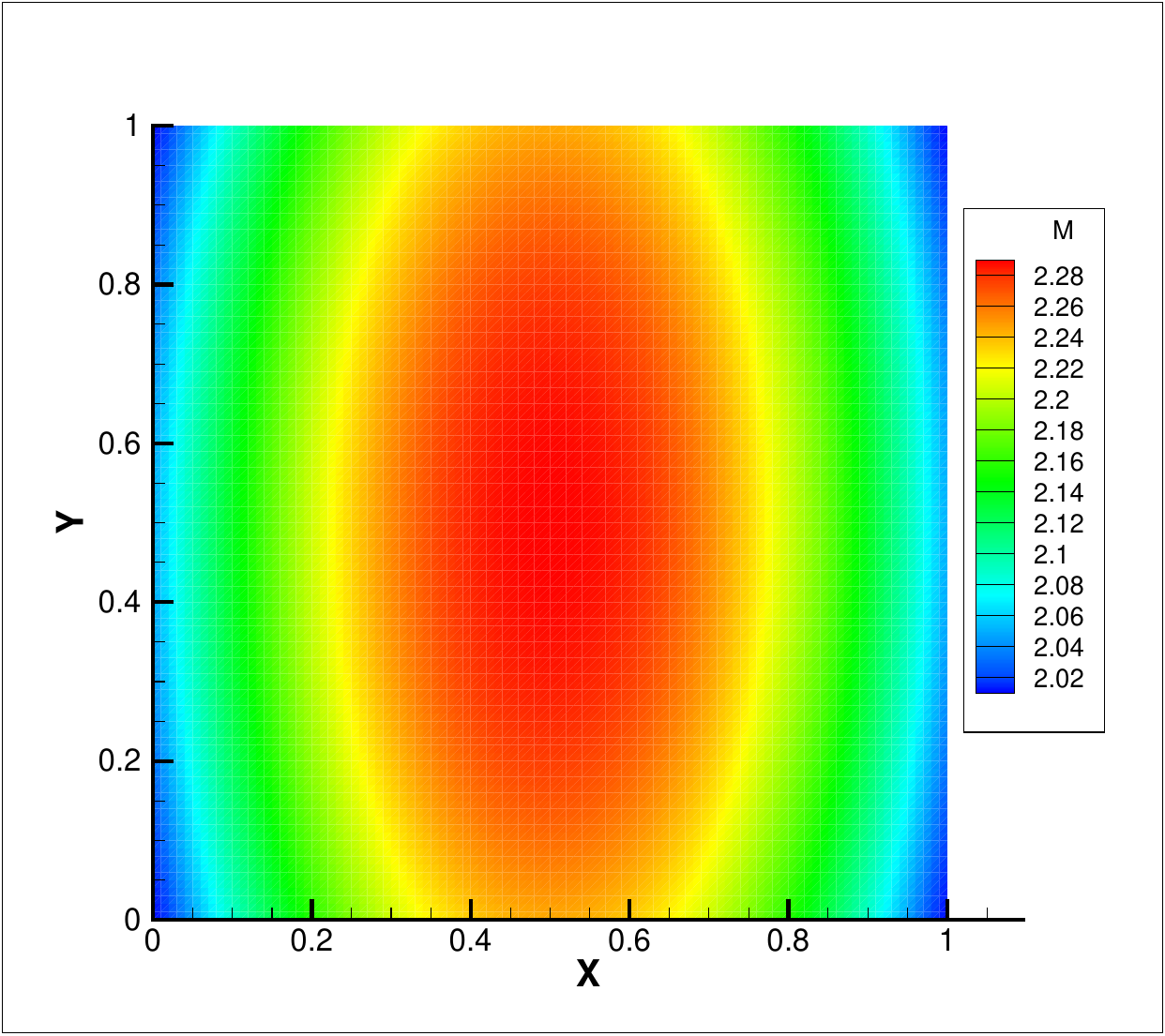} 
	\end{minipage}  
	\begin{minipage}{0.32\textwidth}  
		\centering  
		\includegraphics[width=\textwidth]{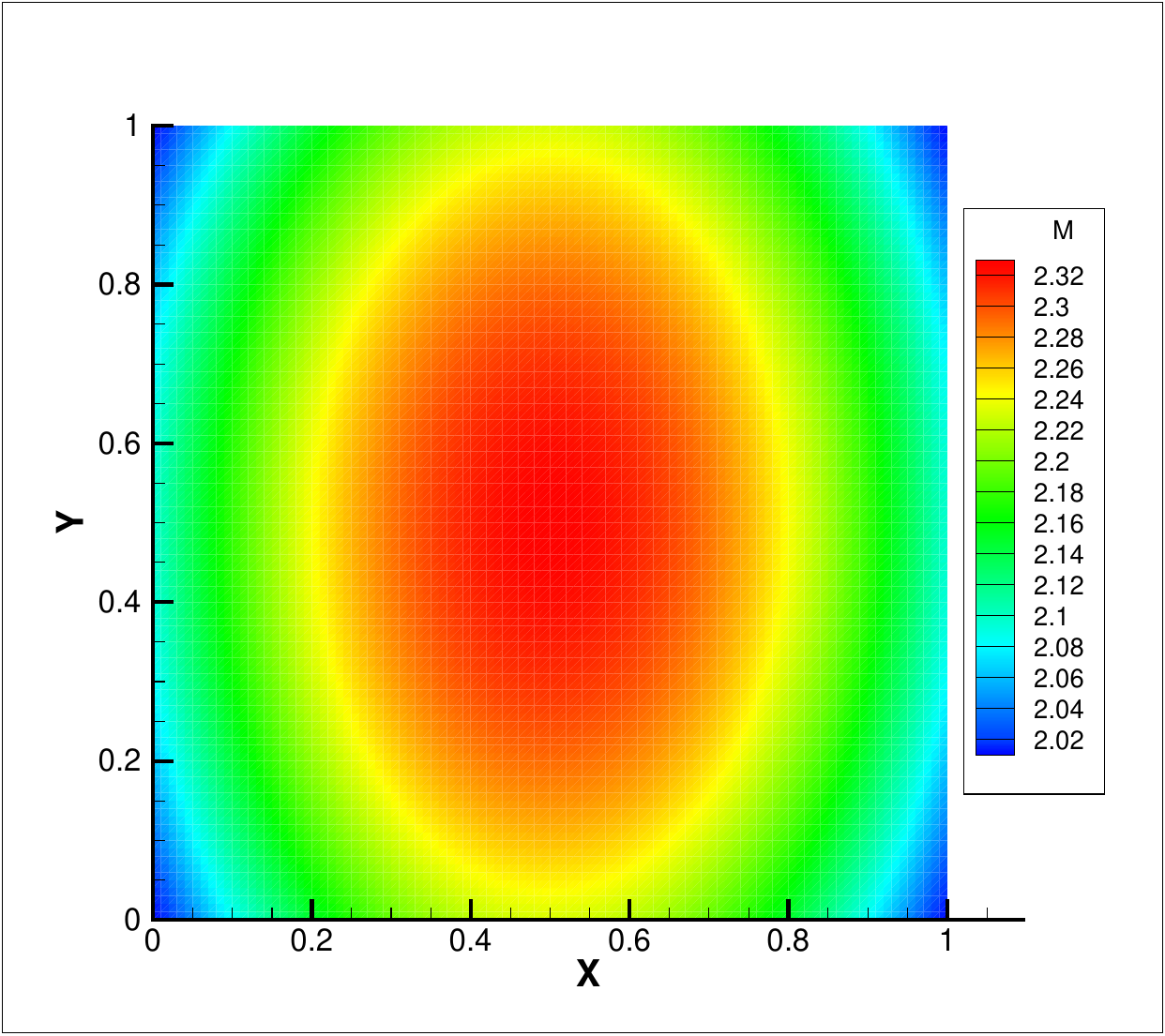}  
	\end{minipage}  
	
	\begin{minipage}{0.32\textwidth} 
		\centering  
		\includegraphics[width=\textwidth]{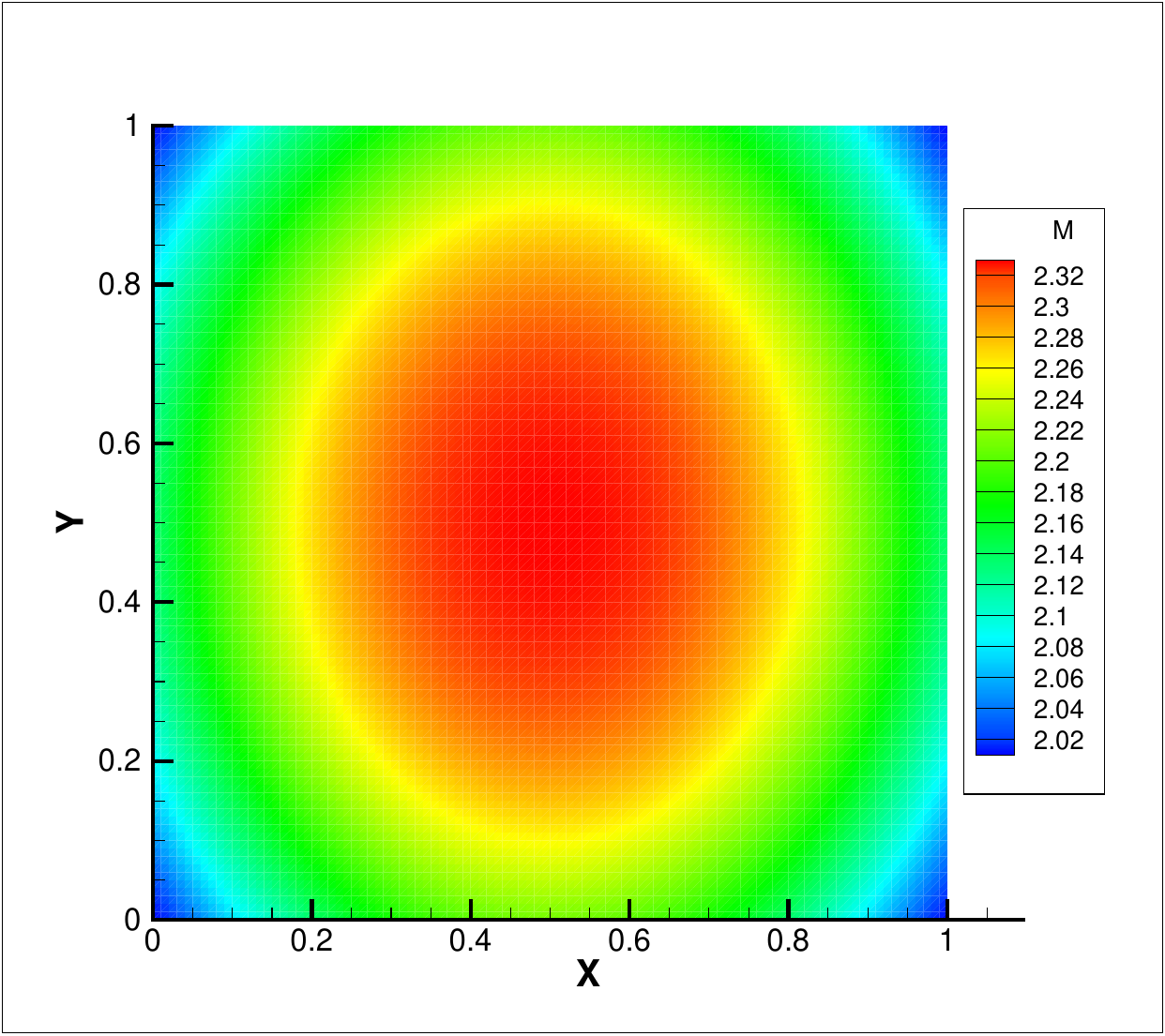}  
	\end{minipage}  
	\begin{minipage}{0.32\textwidth} 
		\centering  
		\includegraphics[width=\textwidth]{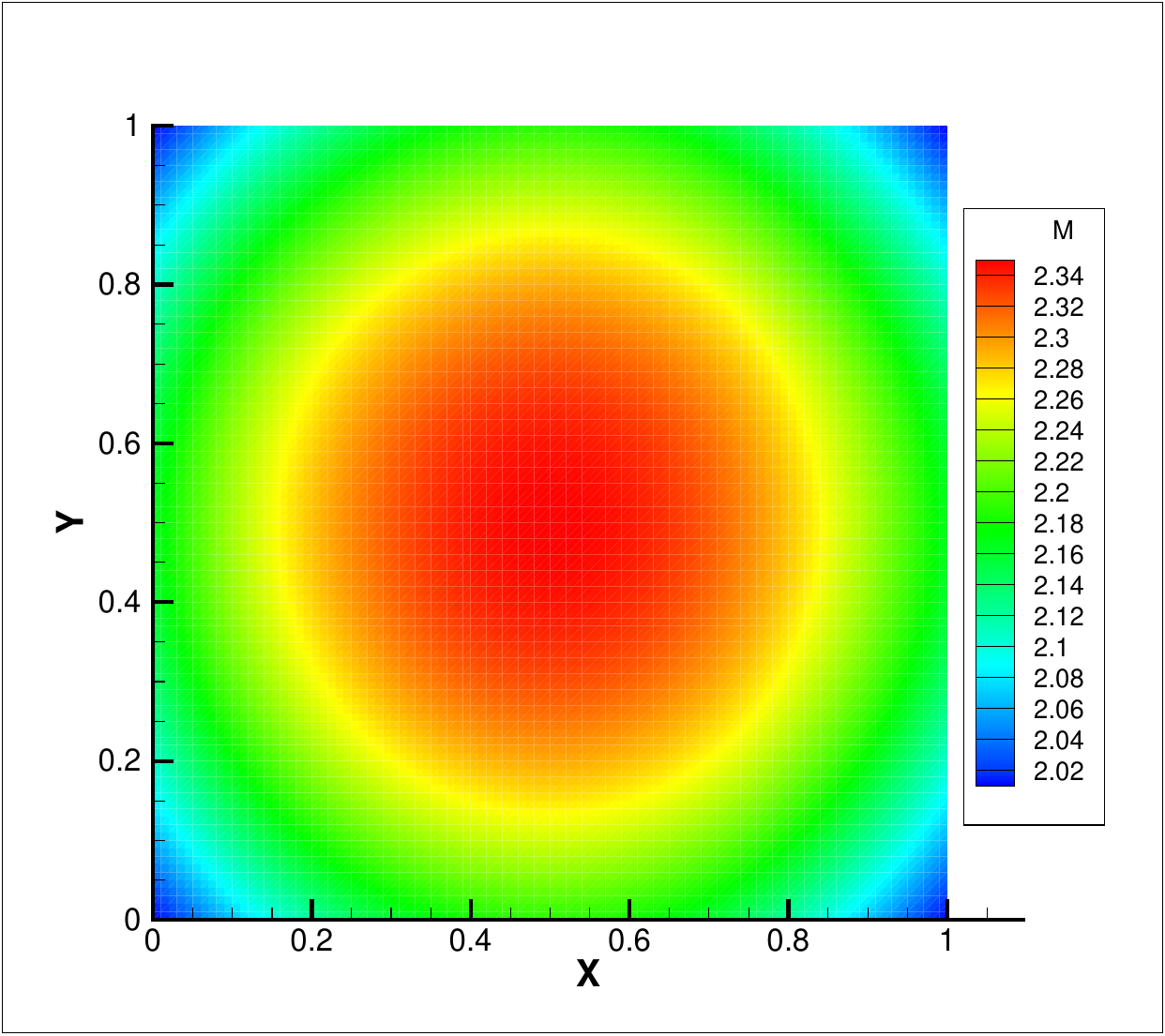} 
	\end{minipage}  
	\begin{minipage}{0.32\textwidth}  
		\centering  
		\includegraphics[width=\textwidth]{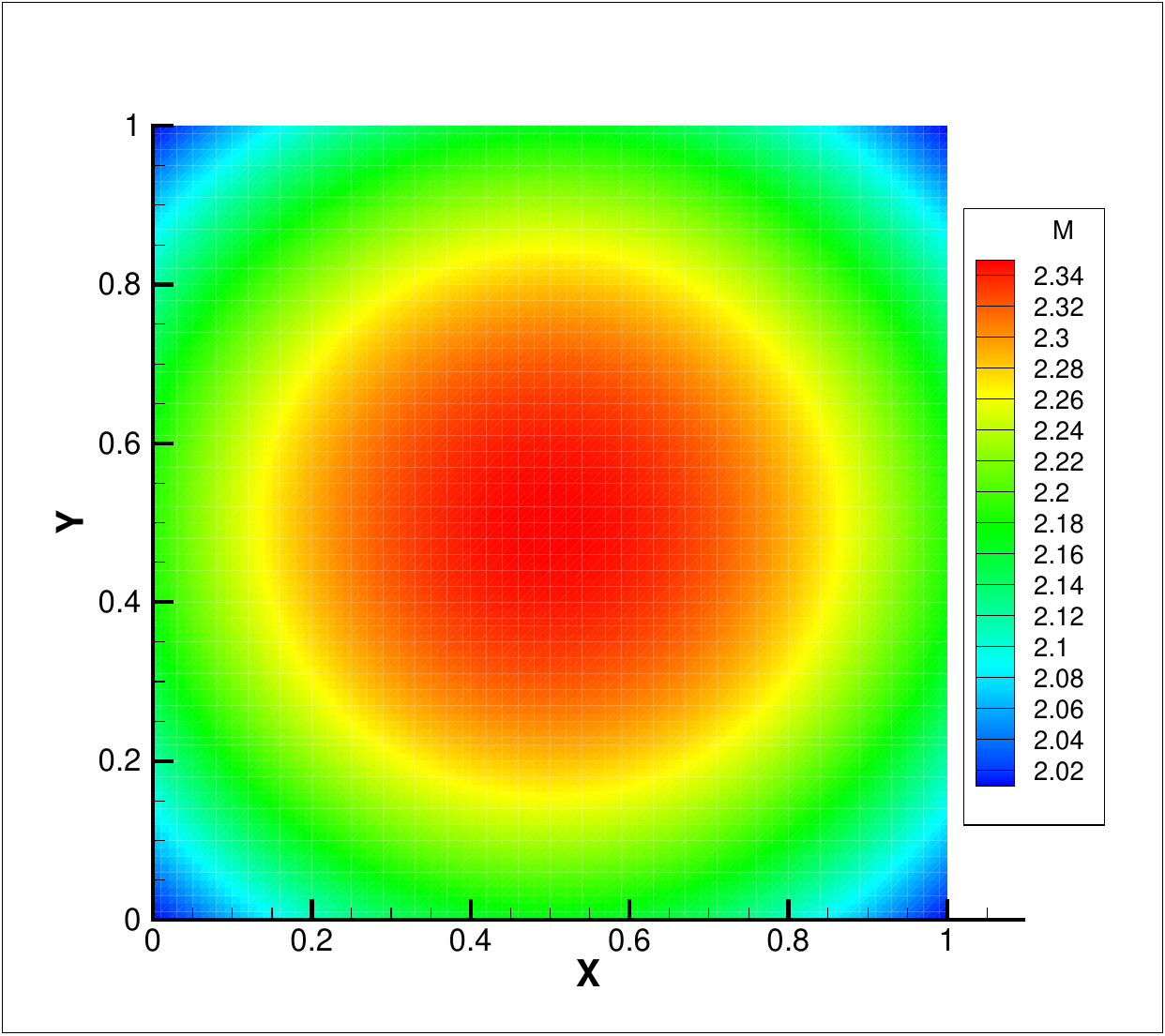}  
	\end{minipage}  
	\caption{Numerical solutions of density at times t = 0, 0.2, 0.4, 0.6, 0.8, 1.0.}  
	\label{density}  
\end{figure}

\begin{figure}[htbp] 
	
	\centering 

	\begin{minipage}{0.32\textwidth} 
		\centering  
		\includegraphics[width=\textwidth]{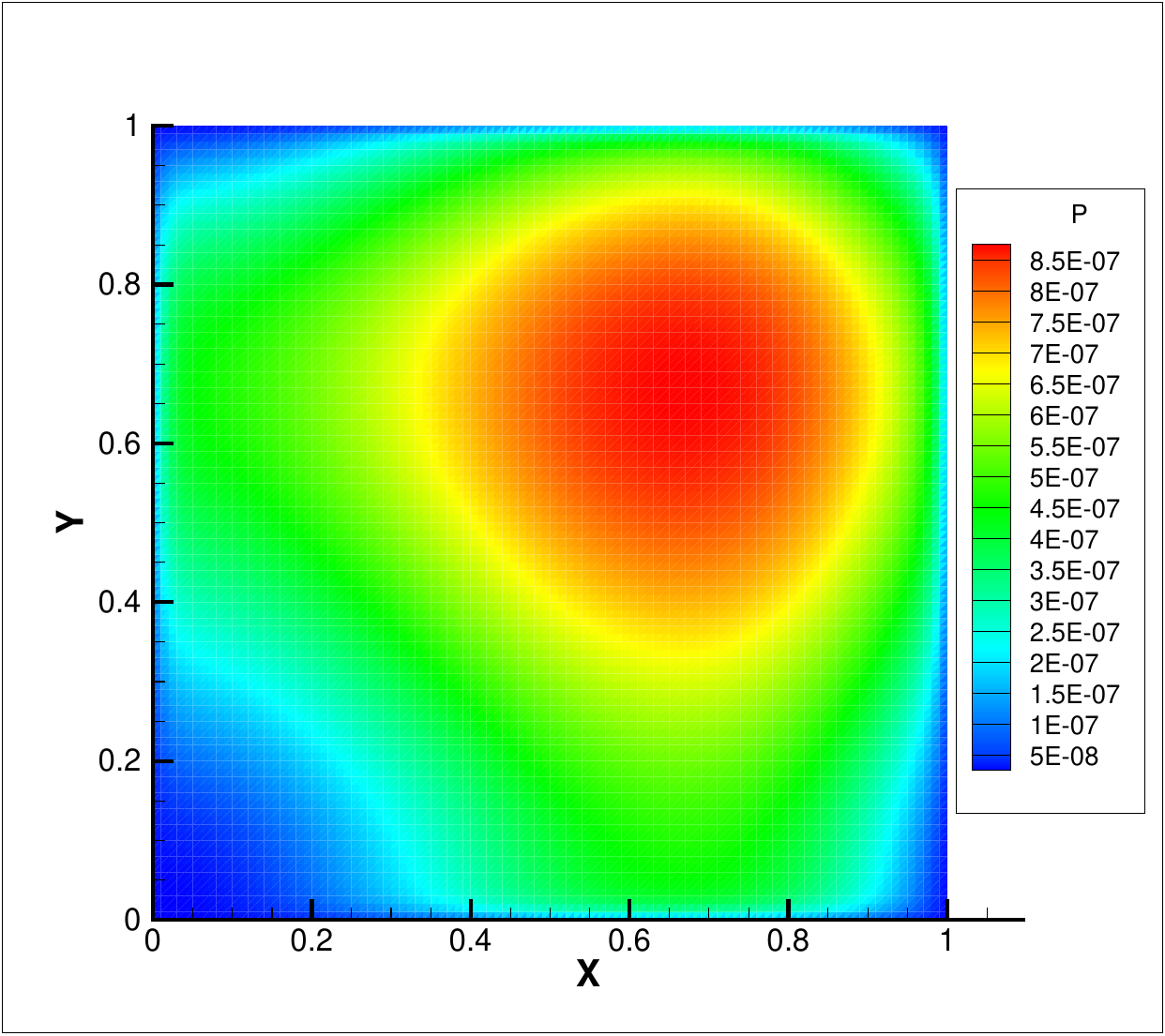}  
	\end{minipage}  
	\begin{minipage}{0.32\textwidth} 
		\centering  
		\includegraphics[width=\textwidth]{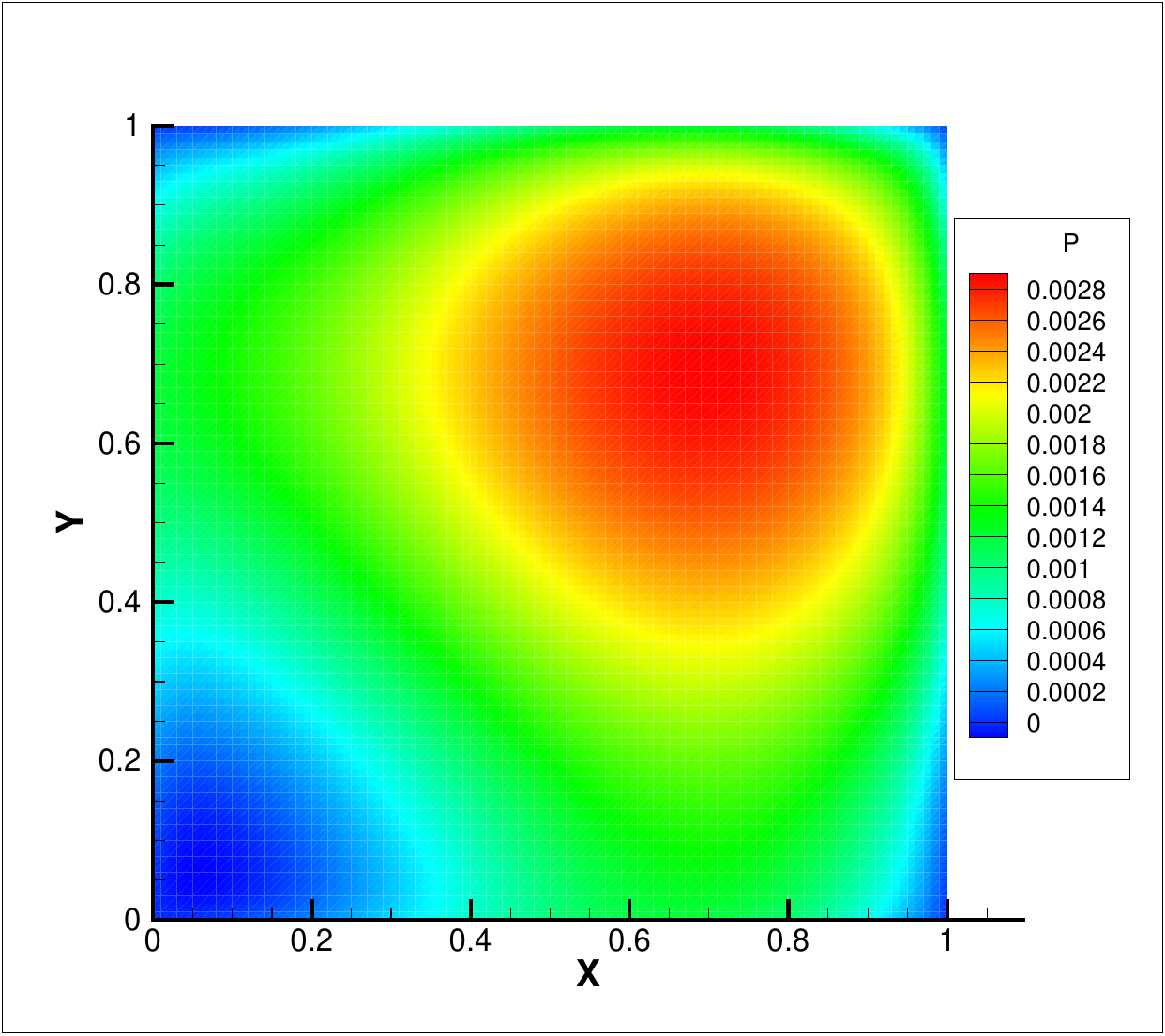} 
	\end{minipage}  
	\begin{minipage}{0.32\textwidth}  
		\centering  
		\includegraphics[width=\textwidth]{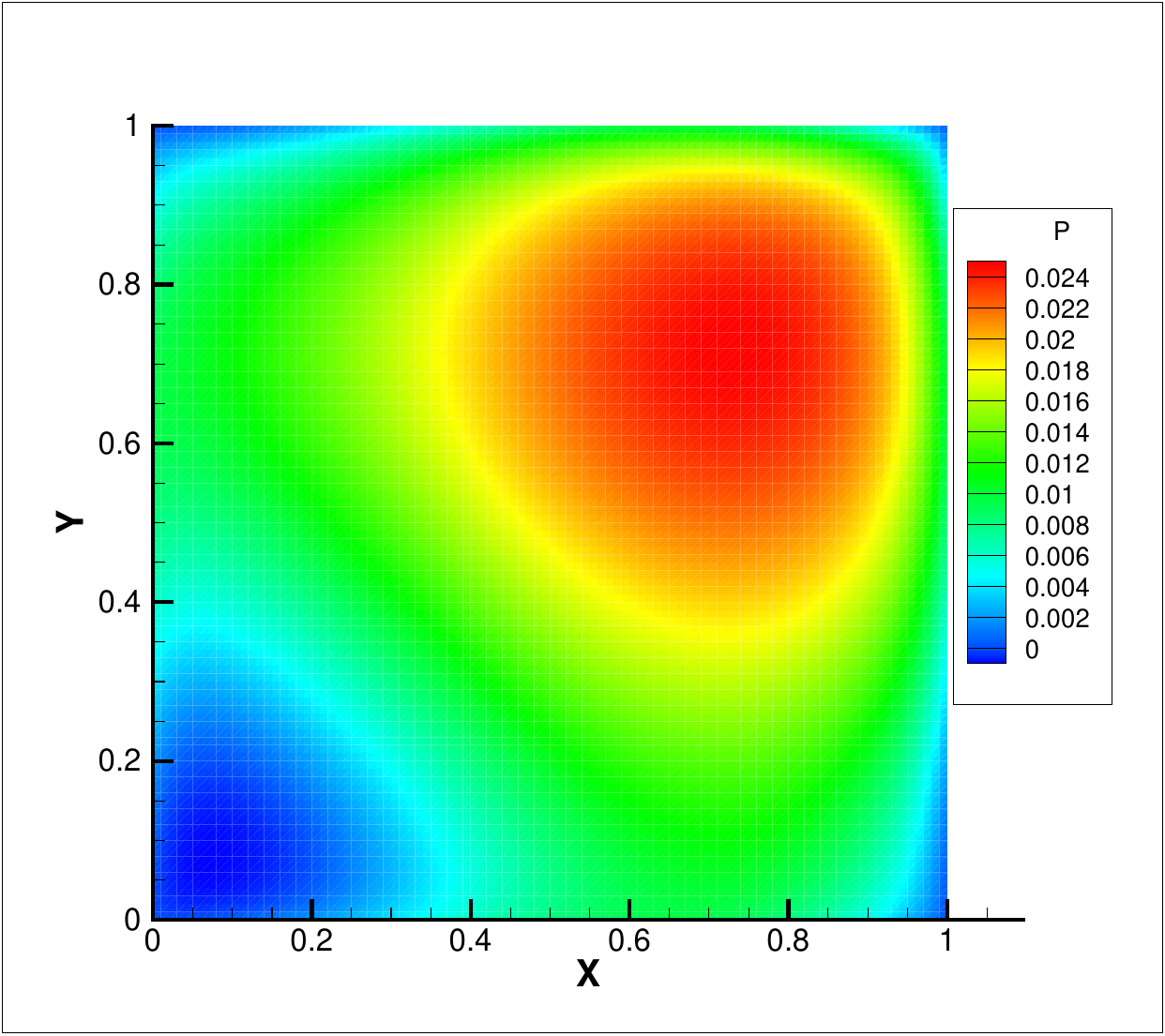}  
	\end{minipage}  
	
	\begin{minipage}{0.32\textwidth} 
		\centering  
		\includegraphics[width=\textwidth]{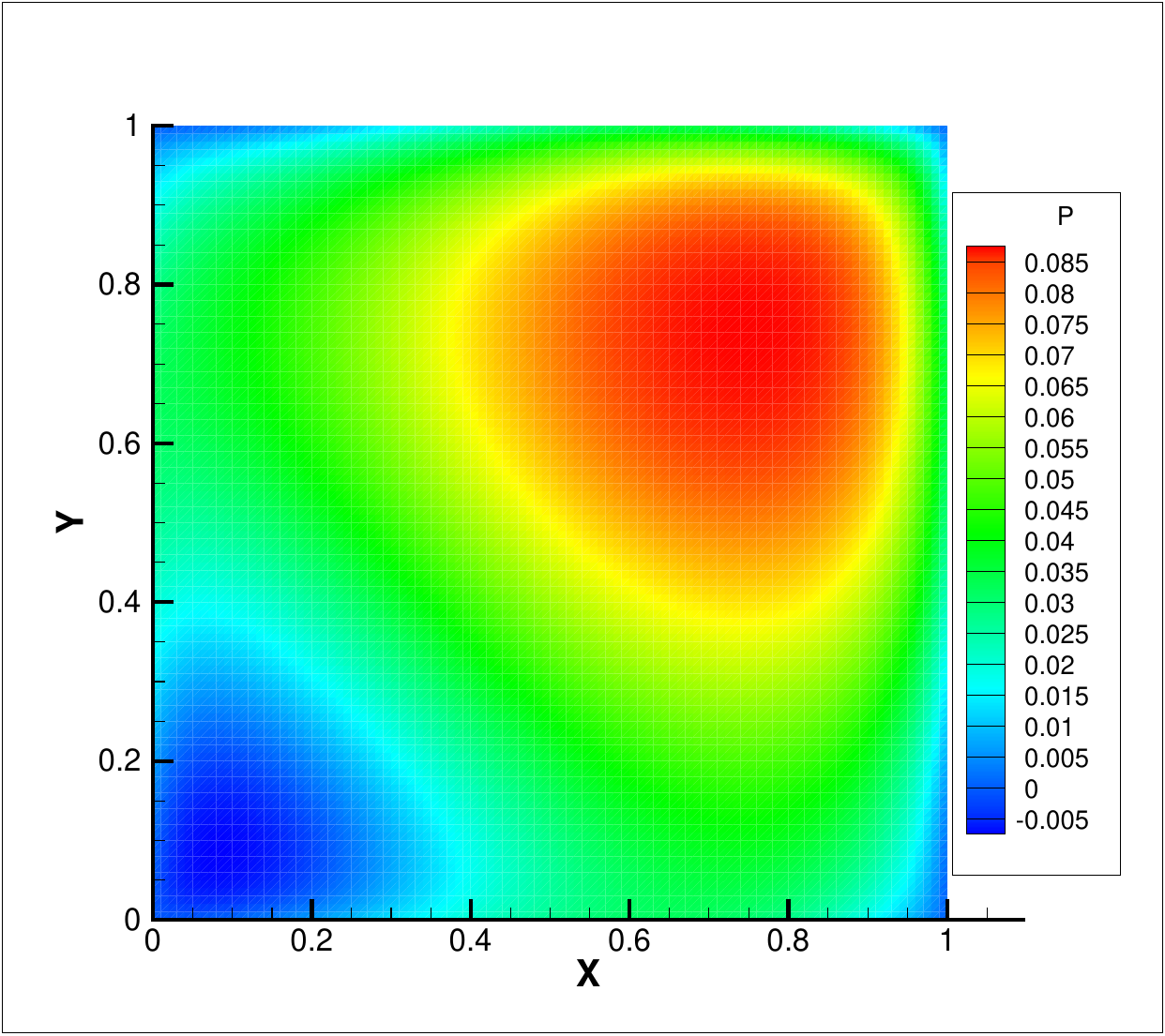}  
	\end{minipage}  
	\begin{minipage}{0.32\textwidth} 
		\centering  
		\includegraphics[width=\textwidth]{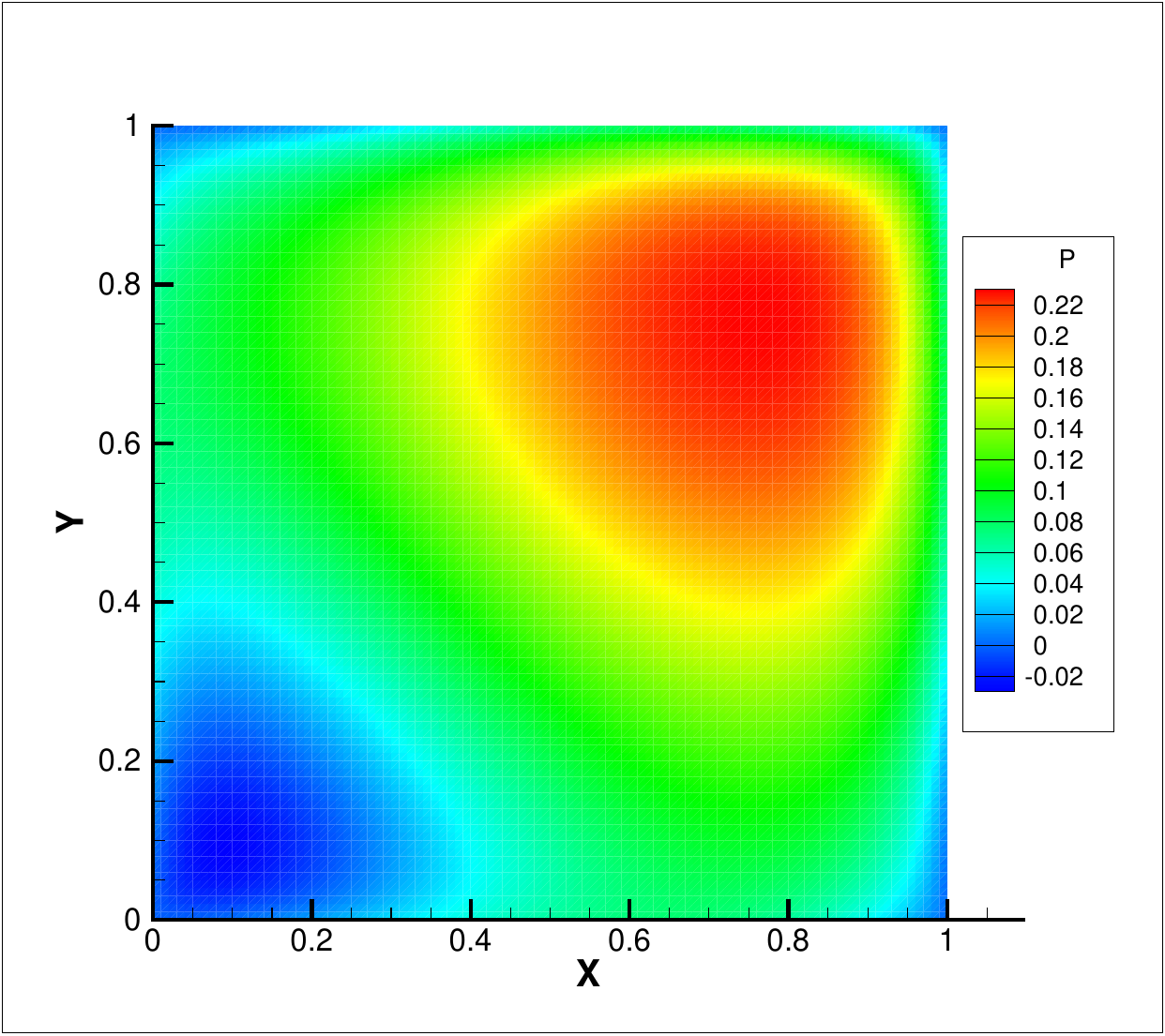} 
	\end{minipage}  
	\begin{minipage}{0.32\textwidth}  
		\centering  
		\includegraphics[width=\textwidth]{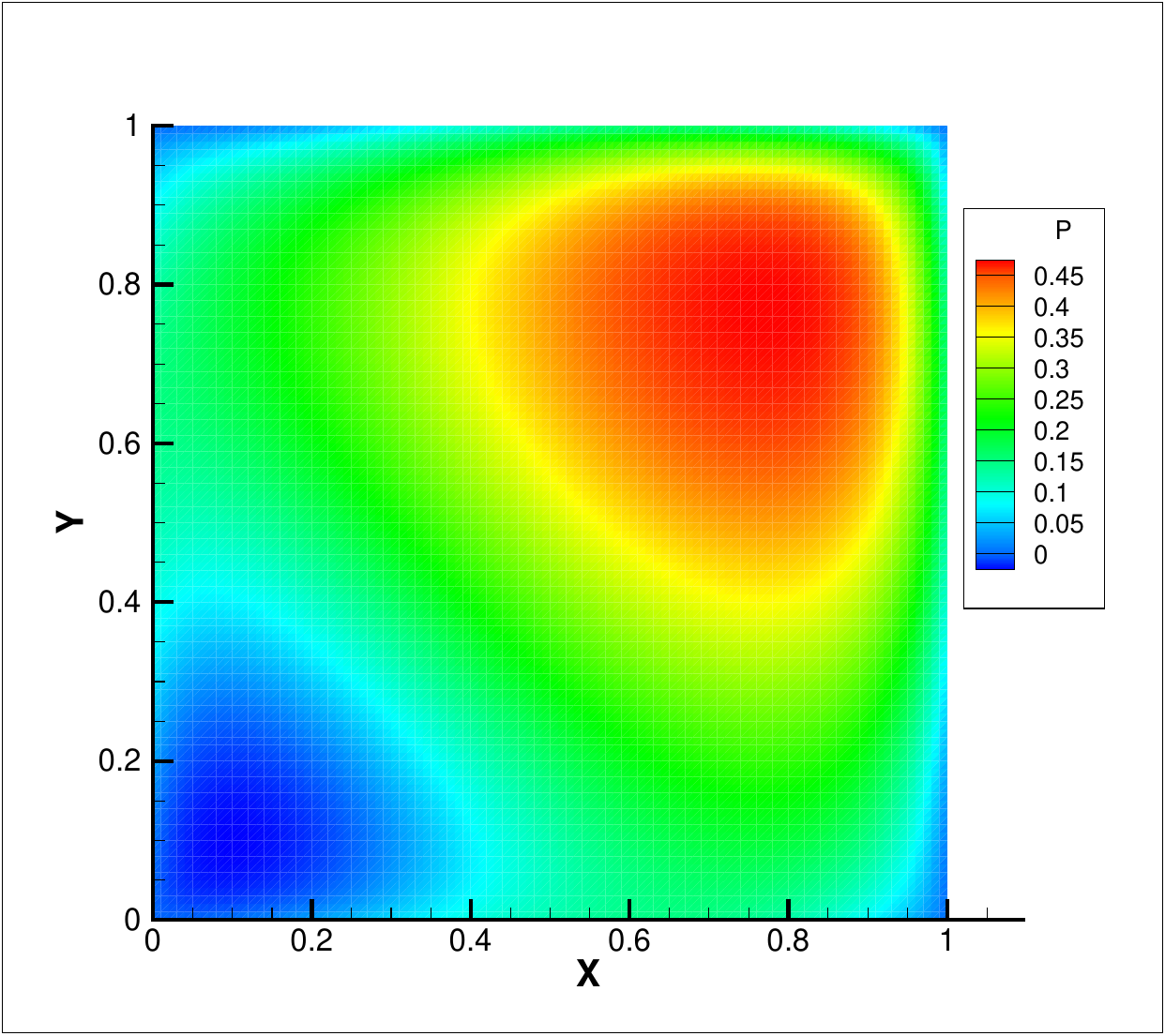}  
	\end{minipage}  
	\caption{Numerical solutions of temperature at times t = 0, 0.2, 0.4, 0.6, 0.8, 1.0.}  
	\label{temperature}  
\end{figure}

\section{Conclusions}

In this work, we developed and discussed a first-order Euler  FEMs to deal with the natural
convection problem with variable density, where nonlinear terms were treated by a linearized
semi-implicit approximation such that it is easy for implementation. Stability and error analysis of the Euler FEMs is deduced. Finally, a lot of numerical tests show that the proposed method not only can
deal with the incompressible natural convection problem with variable density but also save time
very well. 

\section{Acknowledgments}
The authors would like to thank the editor and referees for their valuable comments and suggestions
which helped us to improve the results of this paper.


\begin{thebibliography}{00}
	
\bibitem{evance2022}
L.C. Evance, Partial differential equations, Vol. 19, American Mathematical Society, 2022.

\bibitem{heywood1990}
J.G. Heywood, R. Rannacher, Finite element approximation of the nonstationary Navier–Stokes problem. Part IV: error analysis for second-order time discretization.  SIAM J. Numer. Anal.  27(2) (1990) 353-384.

\bibitem{gerbeau1997}
J. Gerbeau and C. Le Bris, Existence of solution for a density-dependent magnetohydrodynamic equation, Adv. Differential Equations. 2(1997) 427-452.

\bibitem{wu2017}
J Wu, J Shen, X Feng, Unconditionally stable Gauge–Uzawa finite element schemes for incompressible natural convection problems with variable density, J. Comput. Phys. 348 (2017) 776-789.

\bibitem{thomee2006}
V.Thom\'{e}e, Galerkin Finite Element Methods for Parabolic Problems. Springer-Verlag, New York, 2006.

\bibitem{scott1994}
S. Brenner,  L. Scott, The mathematical theory of finite element methods, Springer, 1994.

\bibitem{libuyang2022}
B.Y.Li, W.F.Qiu,  Z.Z.Yang, A convergent post-processed discontinuous Galerkin method for incom-
pressible flow with variable density, J. Sci. Comput. 91(2022)2.

\bibitem{hechet2012}
F. Hecht, New development in Free Fem++, J. Numer. Math. 20 (3-4) (2012) 251-266.

\bibitem{cpietro2012}
D.A. Di Pietro, A. Ern, Mathematical Aspects of Discontinuous Galerkin Methods, Math\'{e}matiques et Applications, vol. 69, Springer, Berlin, Heidelberg, 2012.

\bibitem{vahl1983}
G. De Vahl Davis, Natural convection of air in a square cavity: a bench mark numerical solution, Int. J. Numer. Methods Fluids. 3 (3) (1983) 249–264.

\bibitem{huang2015}
P. Huang, J. Zhao, X. Feng, Highly eﬃcient and local projecton-based stabilized finite element method for natural convection problem, Int. J. Heat Mass
Transf. 83 (2015) 357–365.

 \bibitem{su2014}
 H. Su, L. Qian, D. Gui, X. Feng, Second order fully discrete and divergence free conserving scheme for time-dependent conduction-convection equations,
Int. Commun. Heat Mass Transf. 59 (2014) 120–129.

\bibitem{su2017}
H. Su, X. Feng, Y. He, Defect-correction finite element method based on Crank-Nicolson extrapolation scheme for the transient conduction-convection
problem with high Reynolds number, Int. Commun. Heat Mass Transf. 81 (2017) 229–249.

\bibitem{su20172}
H. Su, X. Feng, Y. He, Second order fully discrete defect-correction scheme for nonstationary conduction-convection problem at high Reynolds number,
Numer. Methods Partial Differ. Equ. 33 (3) (2017) 681–703.

\bibitem{wu2016}
J. Wu, P. Huang, X. Feng, D. Liu, An eﬃcient two-step algorithm for steady-state natural convection problem, Int. J. Heat Mass Transf. 101 (2016)
387–398.

\bibitem{wu20171}
J. Wu, X. Feng, F. Liu, Pressure-correction projection FEM for time-dependent natural convection problem, Commun. Comput. Phys. 21 (2017)
1090–1117.

\bibitem{zhang2016}

T. Zhang, X. Feng, J. Yuan, Implicit-explicit schemes of finite element method for the non-stationary thermal convection problems with temperature-
dependent coeﬃcients, Int. Commun. Heat Mass Transf. 76 (2016) 325–336.


\bibitem{wang2019}
W. Wang, J. Wu, X. Feng, A novel characteristic variational multiscale FEM for incompressible natural convection problem with variable density, Int. J.
Numer. Methods Heat Fluid Flow 29 (2) (2019) 580–601.

\bibitem{szewc2011}
K Szewc, J Pozorski, A Taniere, Modeling of natural convection with smoothed particle hydrodynamics: non-Boussinesq formulation. Int. J. Heat Mass Transf. 54.23-24 (2011) 4807-4816.

\bibitem{wu2020}
J Wu, L Wei, X Feng, Novel fractional time-stepping algorithms for natural convection problems with variable density.  Appl. Numer. Math. 151 (2020) 64-84.

\bibitem{guemond2000}
J.L. Guermond, L Quartapelle, A projection FEM for variable density incompressible flows, J. Comput. Phys.   165(1)(2000) 167-188.

\bibitem{chen2024}
Y. He, H. Chen, H. Chen, Physical feature preserving and unconditionally stable SAV fully discrete finite element schemes for incompressible flows with variable density, J. Comput. Appl. Math.  445 (2024)115828.

			\bibitem{MR2802553}
J.L. Guermond, A.J. Salgado, Error analysis of a fractional time-stepping technique for incompressible flows with variable density, SIAM J. Numer. Anal. 49 (3) (2011) 917-944.

	\bibitem{li2023error}
Y. Li, R. An, Error analysis of a unconditionally stable BDF2 finite element scheme for the incompressible flows with variable density, J. Sci. Comput. 95 (3) (2023) 73.


		\bibitem{MR4221298}
W Cai, B Li, Y Li, Error analysis of a fully discrete finite element method for variable density incompressible flows in two dimensions, ESAIM: Math. Model. Numer. Anal. 55 (2021)  S103-S147.
\end{thebibliography}
\end{document}